\theoremstyle{plain}
\newtheorem{main}{Theorem}
\newtheorem{theorem}{Theorem}[section]
\newtheorem{lemma}[theorem]{Lemma}
\newtheorem{proposition}[theorem]{Proposition}
\theoremstyle{remark}
\newtheorem{remark}[theorem]{Remark}
\newtheorem{definition}{Definition}
\newcommand\numberthis{\addtocounter{equation}{1}\tag{\theequation}}
\newcommand{\diam}{\operatorname{diam}}
           \def\ea{\end{array}}
          \def\ec{\end{center}}
     \def\ed{\end{description}}
        \def\ee{\end{equation}}
       \def\eea{\end{eqnarray}}
     \def\eeaa{\end{eqnarray*}}
 \def\et{\end{thebibliography}}
\def\bM{{\bf{M}}}
\def\cG{{\mathcal G}}
\def\cA{{\mathcal A}}
\def\cD{{\mathcal D}}
\def\cC{{\mathcal C}}
\def\cM{{\mathcal M}}
\def\cP{{\mathcal P}}
\def\cS{{\mathcal S}}
\def\vep{\varepsilon}
\def\TT{{\mathbb T}}
\def\RR{{\mathbb R}}
\def\NN{{\mathbb N}}
\def\Var{\operatorname{Var}}
\def\Exp{\operatorname{Exp}}
\def\vp{{\varphi}}
\title[Existence and uniqueness of equilibrium states]{Existence and uniqueness of equilibrium states for systems with specification at a fixed scale}
\date{\today}
\author{Maria Jose Pacifico, Fan Yang and Jiagang Yang}
\address{Instituto de Matem\'atica, Universidade Federal do Rio de Janeiro, C. P. 68.530, CEP 21.945-970,  Rio de Janeiro, RJ, Brazil.}
 \email{pacifico@im.ufrj.br }
\address{Department of Mathematics, Michigan State University, East Lansing, Michigan, USA.}
\email{yangfa31@msu.edu}
\address{Departamento de Geometria, Instituto de Matem\'atica e Estat\'\i stica, Universidade Federal Fluminense, Niter\'oi, Brazil}
\email{yangjg\@@impa.br}
\thanks{This research has been supported [in part] by CAPES - Finance Code $001$ and CNPq-grants. MJP was partially supported by FAPERJ}
\begin{document}

\maketitle

\begin{abstract}
We consider the uniqueness of equilibrium states for dynamical systems that satisfy certain weak, non-uniform versions of specification, expansivity, and the Bowen property at a fixed scale. Following Climenhaga-Thompson's approach which was originally due to Bowen and Franco, we prove that equilibrium states are unique even when the weak specification assumption only holds on a small collection of orbit segments.
\end{abstract}

\section{Introduction}\label{s.1}
For a continuous flow $(\varphi_t)_{t\in\RR}$ or a homeomorphism $f$ on a compact metric space $\bM$ and a continuous function $\phi:\bM\to\RR$, an invariant probability measure that maximize the quantity
$$
P(\mu):=h_\mu(\varphi_1) + \int\phi\,d\mu
$$
is called an {\em equilibrium state} for the potential function $\phi$. Such measures include the measures of maximal entropy (for $\phi\equiv 0$) and the physical measures of Axiom A attractors (when $\phi=-\log |\det Df\mid E^u|$). The study of such measures traces back to Sinai, Ruelle and Bowen~\cite{B08,Ru,Si} and has a deep connection with statistical physics.

This paper is devoted to the existence and, more importantly, the uniqueness of equilibrium states. This problem has been studied by various authors using topological (e.g.~\cite{B75,Franco}) and functional analytical (e.g.~\cite{RuBook,Sarig}) approaches. Among them, a recent breakthrough was made by Climenhaga and Thompson~\cite{CT16} following Bowen's approach~\cite{B75}. They provide a easy-to-verify topological criterion consisting of the following ingredients:
\begin{enumerate}[label={(\Roman*)}]
\item[(0)] there exists a large collection of orbit segments $\cD$ with a `decomposition' $(\cP,\cG,\cS)$; thinking of $\cG$ as the `good core', we have
$$
\cG\subset \cG^1\subset\cG^2\subset\cdots\subset \cD
$$
which eventually exhausts the set $\cD$;
\item the system has the specification property on $\cG^K$ at scale $\delta$, for every $K\in\NN$;
\item the potential function $\phi$ has bounded distortion (the Bowen property) on $\cG$ at a given scale $\vep$;
\item the `bad' parts of the system, consisting of $\cP,\cS,\cD^c$ and those points where the system is not expansive, must have smaller pressure comparing to $\cG$.
\end{enumerate}
Under these assumptions, they prove that there exists a unique equilibrium state with the upper and lower Gibbs property. For an overview of their result and applications, see the recent survey~\cite{CT19} and the references therein.

It is worth noting that all the assumptions listed above are made at certain prescribed scales. This is particularly useful for certain applications such as the Bonatti-Viana diffeomorphism on $\TT^4$~\cite{CFT18}, Ma\~n\'e's derived from Anosov diffeomorphism on $\TT^3$~\cite{CFT19}, and geodesic flows on surfaces without conjugate points~\cite{CKW}. In many other applications however, Assumption (I) is often replaced by the following, much stronger assumption (see Lemma~\ref{l.tailspec}):
\begin{enumerate}[label={(\Roman*)}]
	\item[(I')] The system has the specification property on $\cG$ at {\bf all scales}.
\end{enumerate}

The goal of this paper is to improve the Climenhaga-Thompson criterion by weakening the specification assumption. For this purpose, we assume $(\varphi_t)_{t\in\RR}$ to be a Lipschitz flow with Lipschitz constants continuous in $t$. Our result can be easily adapted to Lipschitz continuous homeomorphisms with minor modifications.

We define
\begin{equation}\label{e.L}
L_X = \max_{t\in[0,1]} L_{\varphi_t}\ge 1.\footnote{When $f$ is a Lipschitz homeomorphism, we can take $L_f$ to be the Lipschitz constant of $f$.} 
\end{equation}
Our main result is:
\begin{main}\label{m.1}
Let $(\varphi_t)_{t\in\RR}$ be a Lipschitz continuous flow on a compact metric space $\bM$, and $\phi:\bM\to\RR$ a continuous potential function. Suppose that there exist $\vep>0,\delta>0$ with $\vep\ge1000L_X\delta$ such that $\varphi_t$ is almost expansive at scale $\vep$, and $\cD\subset\bM\times\RR^+$ which admits a decomposition $(\cP,\cG,\cS)$ with the following properties:
\begin{enumerate}[label={(\Roman*)}]
\item $\cG$ has tail (W)-specification at scale $\delta$;
\item  $\phi$ has the Bowen property at scale $\vep$ on $\cG$;
\item $P(\cD^c\cup [\cP]\cup[\cS], \phi,\delta,\vep)<P(\phi)$.
\end{enumerate}
Then there exists a unique equilibrium state for the potential $\phi$.
\end{main}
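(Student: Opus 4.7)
The overall plan is to adapt the Bowen--Franco--Climenhaga--Thompson framework, with careful scale bookkeeping throughout, since the specification, Bowen, and expansivity hypotheses now live at two different scales $\delta$ and $\vep$. Existence of at least one equilibrium state follows from a standard variational construction: for each $K$, choose $(n,\delta)$-separated subsets $E_n \subset \cG^K\cap(\bM\times\{n\})$ with $\sum_{(x,n)\in E_n} e^{S_n\phi(x)}$ close to the scale-$\delta$ partition function, form atomic measures weighted by these exponentials, and take weak* limits of their flow-averages. The pressure-gap hypothesis (III) ensures that no mass escapes to $\cD^c\cup[\cP]\cup[\cS]$, so the weak* limit is an invariant measure living on the ``good'' part of the system, and the lower bound on the scale-$\delta$ partition function coming from tail $(W)$-specification on $\cG$ guarantees that it attains pressure $P(\phi)$.

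The heart of the argument is the establishment of two-sided Gibbs-type bounds on Bowen balls centered on $\cG$-orbit segments. The \emph{lower Gibbs bound} comes from tail $(W)$-specification at scale $\delta$: given any $(x,t)\in\cG$, concatenation on either side via specification forces its Bowen ball at scale $\delta$ to absorb a definite fraction of the total partition mass, yielding a bound of the form $\mu(B_t(x,\delta))\ge c\,e^{-tP(\phi)+S_t\phi(x)}$ for any equilibrium state $\mu$. The \emph{upper Gibbs bound} follows from almost expansivity and the Bowen property at scale $\vep$ together with the variational principle. The hypothesis $\vep\ge1000L_X\delta$ is precisely the bridge between these two scales: when specification glues orbits within a $\delta$-tube, iterating the Lipschitz flow distorts pairwise distances by at most $L_X^t$ over each unit of time, and the generous buffer ensures that scale-$\delta$ Bowen balls of concatenated segments sit safely inside scale-$\vep$ Bowen balls, where the Bowen distortion estimate applies and where almost expansivity discriminates distinct orbits.

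The main obstacle is controlling the contributions of $\cP$ and $\cS$ under \emph{tail} rather than uniform specification, since the gap function may grow with the length of the glued segments. I would handle this via a two-parameter exhaustion. Fix $K$ and split $\cD=\cG^K\sqcup(\cD\setminus\cG^K)$ with $\cG^K=[\cP]^{\le K}\cdot\cG\cdot[\cS]^{\le K}$; on the complement the partition function is bounded by $e^{nP(\cD^c\cup[\cP]\cup[\cS],\phi,\delta,\vep)+o(n)}$, which by (III) is exponentially smaller than $e^{nP(\phi)}$ and hence contributes zero $\mu$-mass. Within $\cG^K$ the specification gap is uniformly bounded, so the Gibbs estimates of the previous paragraph apply uniformly; sending $K\to\infty$ recovers the full equilibrium state. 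Uniqueness is then standard: two ergodic equilibrium states $\mu_1,\mu_2$ give comparable mass to Bowen balls around every $(\cG,\delta)$-typical point, so their Radon--Nikodym derivative is essentially bounded on a common full-measure set, and ergodicity forces $\mu_1=\mu_2$. The non-ergodic case reduces to this via ergodic decomposition combined with the pressure-gap (III).
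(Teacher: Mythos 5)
There is a genuine gap, and it sits exactly at the point this theorem is designed to address. Your exhaustion step asserts that ``within $\cG^K$ the specification gap is uniformly bounded, so the Gibbs estimates apply uniformly; sending $K\to\infty$ recovers the full equilibrium state.'' But hypothesis (I) gives specification only on $\cG$, not on $\cG^K$; the passage from $\cG$ to $\cG^K$ at a fixed scale is precisely what fails in general (it is available only when $\cG$ has specification at \emph{all} scales, as in Lemma~\ref{l.tailspec}). The best one can extract from specification of $\cG$ at scale $\delta$ is specification of $\cG^K$ at scale roughly $\bigl(\max_{t\in[0,K]}L_{\varphi_t}\bigr)\delta$, which grows with $K$ and eventually exceeds $\vep$, so the Gibbs constants you need are not uniform in $K$ and the limit $K\to\infty$ does not go through. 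Moreover, the collection on which the pressure-gap (III) forces large partition mass is $\cC\cap\cG^M$ with $M$ determined by the size of the gap (Lemma~\ref{l.4.8}), not $\cC\cap\cG$, so one cannot simply run the Bowen-ball argument around $\cG$-segments either. The missing idea is the paper's Lemma~\ref{l.key}: starting from a collection $\cC$ with $\Lambda(\cC,2\gamma,2\gamma,t)\ge\alpha_1e^{tP(\phi)}$, one first lands in $\cC\cap\cG^M$ via the pressure gap, and then trims integer-length prefixes and suffixes of length at most $[M]$ to produce a collection inside $\cG^1$ with comparable partition function at the shifted time $T(t)\in[t-2M,t]$. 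Since prefixes and suffixes in $\cG^1$ have length less than $1$, $\cG^1$ inherits tail specification at scale $L_X\delta$ (a single factor of $L_X$, which is why the ratio $\vep\ge 1000L_X\delta$ suffices), and all subsequent Gibbs and mixing estimates are run on $\cG^1$ with constants depending only on the fixed $M_0=M(2\rho,1)$. Without this reduction your scheme has no mechanism to convert the pressure gap into uniform lower Gibbs bounds.

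A secondary overreach: you claim the lower Gibbs bound $\mu(B_t(x,\delta))\ge c\,e^{-tP(\phi)+\Phi_0(x,t)}$ holds for \emph{any} equilibrium state and then compare Radon--Nikodym derivatives of two ergodic equilibrium states directly. The paper proves the lower Gibbs property only for the specific measure $\mu$ constructed from weighted $(t,\rho')$-separated sets (Lemma~\ref{l.gibbs}); for an arbitrary equilibrium state $\nu$ one only knows, via adapted partitions and Lemma~\ref{l.6.1}, that positive-$\nu$-measure unions of partition elements carry partition mass of order $e^{tP(\phi)}$. Uniqueness is then obtained by showing no equilibrium state is mutually singular with $\mu$ (Proposition~\ref{p.abscts}, which also needs the approximation Lemma~\ref{l.separate} to absorb the time shift $t\to T(t)$) and that $\mu$ is ergodic (Proposition~\ref{p.ergodic}, via the mixing-type Lemma~\ref{l.mixing}). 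Your ``comparable mass on Bowen balls around $(\cG,\delta)$-typical points'' step would additionally require knowing that $\nu$-typical orbit segments lie in, or are well approximated by, $\cG$ (not merely $\cG^M$), which again is not available under (I) alone.
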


The terminologies will be explained in detail in the next section.

Comparing to~\cite[Theorem 2.9]{CT16}, here we only assume that the specification property (I) holds on the core collection $\cG$ at a fixed scale. This assumption is  easier to verify for systems where the specification property is known to fail for all scales, such as flows with singularities. Theorem~\ref{m.1} will play a central role in our next paper where we prove the uniqueness of equilibrium states for Lorenz attractors in any dimension~\cite{PYY22}. It can also be applied to nearby systems under small perturbation. Applications along this direction is current underway.

We also remark that the assumptions above are not optimal. In particular, we expect that the same result holds if one replaces the almost expansivity by $P^\perp_{\exp}(\phi,\vep)<P(\phi)$ (for the meaning of $P^\perp_{\exp}(\phi,\vep)$, see~\cite[Definition 2.7, 2.8]{CT16}). The ratio $\vep/\delta\ge1000L_X$ can also be weakened. However, to simplify the argument and the choice of parameters, we will not pursue the optimal ratio in this paper.

\section{Preliminaries}\label{s.2}
For completeness, in this section we will recap the terminology of~\cite{CT16}. For the convenience of those who are familiar with~\cite{CT16}, our notations are largely the same, so it is safe to skip this section and move on to Section~\ref{s.2'}.

\subsection{Pressure on a collection of orbit segments}\label{s.2.1}
Throughout this article, we will assume that $\bM$ is a compact metric space, and $X$ is a Lipschitz vector field on $\bM$. Denote by $\varphi_t:\bM\to \bM$ the continuous flow generated by $X$, and by $\cM_X(\bM)$ the set of Borel probability measures on $\bM$ that are invariant under $X$. Given $x\in \bM$ and real numbers $a<b$, we write 
$$
\varphi_{[a,b]}(x) := \{\varphi_s(x):a\le s\le b\}
$$
for the orbit segment from $\phi_a(x)$ to $\phi_b(x)$. $\varphi_{(a,b)}(x)$ can be defined in a similar way.

For $t>0$, the {\em Bowen metric} is defined as 
$$
d_t(x,y) = \sup\{d(\vp_s(x),\vp_s(y)): 0\le s\le t\}.
$$
For $\delta>0$, the $(t,\delta)$-Bowen ball at $x$ is the $\delta$-ball under the Bowen metric $d_t$
$$
B_t(x,\delta) = \{y\in \bM: d_t(x,y)<\delta\}, 
$$
and its closure
$$
\overline{B}_t(x,\delta) = \{y\in \bM: d_t(x,y)\le\delta\}.
$$

Given $t>0$ and $\delta>0$, a set $E\subset \bM$ is call {\em $(t,\delta)$-separated}, if for every distinct $x,y\in E$, one has $d_t(x,y)>\delta$.

A core concept in the work of Climenhaga and Thompson is the pressure on a collection of orbit segments. Writing $\RR^+ = [0,\infty)$, we regard $\bM\times \RR^+$ as the collection of finite orbit segments by identifying each $(x,t)\in \bM\times \RR^+$ with the orbit segment $\varphi_{[0,t)}(x)$, with the case of $t=0$ being associated with the empty set rather than the singleton $\{x\}$ itself. Given $\cC\subset \bM\times \RR^+$ and $t\ge 0$ we write $\cC_t = \{x\in \bM: (x,t)\in \cC\}$.

Given a continuous function $\phi:\bM\to\mathbb{R}$ which will be called a {\em potential function} and a scale $\vep>0$, we write 
\begin{equation}\label{e.Phi}
\Phi_\vep(x,t) = \sup_{y\in B_t(x,\vep)}\int_0^t\phi(\varphi_s(y))\,ds,
\end{equation}
with $\vep = 0$ being the standard Birkhoff integral
$$
\Phi_0(x,t) =\int_0^t\phi(\varphi_s(y))\,ds.
$$
Putting $\Var(\phi,\vep) = \sup\{|\phi(x)-\phi(y)|: d(x,y)<\vep\}$, we obtain the trivial bound
$$
|\Phi_\vep(x,t) - \Phi_0(x,t)|\le t\Var(\phi,\vep).
$$

The {\em (two-scale) partition function} $\Lambda(\cC,\phi,\delta,\vep,t)$ for $\cC\in \bM\times \RR^+, \delta>0,\vep>0, t>0$ is defined as 
\begin{equation}\label{e.Lambda}
\Lambda(\cC,\phi,\delta,\vep,t)=\sup\left\{\sum_{x\in E}e^{\Phi_\vep(x,t)}:E\subset \cC_t \mbox{ is $(t,\delta)$-separated}\right\}.
\end{equation}
Henceforth we will often suppress the potential function $\phi$ and write $\Lambda(\cC,\delta,\vep,t)$ since the potential will be fixed throughout. 
When $\cC = \bM\times \RR^+$ we will also write $\Lambda(\bM,\delta,\vep,t)$. A $(t,\delta)$-separated set $E\in\cC_t$ achieving the supremum in~\eqref{e.Lambda} is called {\em maximizing} for $\Lambda(\cC,\delta,\vep,t)$. Note that the existence of such set is only guaranteed when $\cC_t$ is compact. Also note that $\Lambda$ is monotonic in both $\delta$ and $\vep$, albeit in different directions: if $\delta_1<\delta_2$, $\vep_1<\vep_2$ then
\begin{equation}\label{e.monotone}
\begin{split}
\Lambda(\cC,\delta_1,\vep,t)\ge\Lambda(\cC,\delta_2,\vep,t),\\
\Lambda(\cC,\delta,\vep_1,t)\le\Lambda(\cC,\delta,\vep_2,t).
\end{split}
\end{equation}

The pressure of $\phi$ on $\cC$ with scale $\delta,\vep$ is defined as
\begin{equation}\label{e.P1}
P(\cC,\phi,\delta,\vep) = \limsup_{t\to+\infty}\frac1t \log\Lambda(\cC,\phi,\delta,\vep,t).
\end{equation}
The monotonicity of $\Lambda$ can be naturally translated to $P$. Note that when $\vep=0$, $\Lambda(\cC,\phi,\delta,0,t)$ and $P(\cC,\phi,\delta,0)$ agree with the classical definition. In this case we will often write $P(\cC,\phi,\delta)$, and let
\begin{equation}\label{e.P2}
P(\cC,\phi) = \lim_{\delta\to0} P(\cC,\phi,\delta). 
\end{equation}
When $\cC = \bM\times \RR^+$, this coincides with the standard definition of the topological pressure $P(\phi)$.

The variational principle for flows~\cite{BR} states that 
$$
P(\phi) = \sup_{\mu\in\cM_X(\bM)}\left\{h_\mu(\varphi_1)+\int\phi\,d\mu\right\},
$$
where $h_\mu(\varphi_1)$ is the metric entropy of the time-one map $\varphi_1$. A measure achieving the supremum, when exists,\footnote{In particular, such measures exist when the metric entropy is upper semi-continuous. See Section~\ref{s.2.2}.} is called an equilibrium state for the potential $\phi$. When $\phi\equiv0 $ we have $P(\phi)=h_{top}(\varphi_1)$~\cite{B71, BR}, and the corresponding equilibrium states are called the measures of maximal entropy.


\subsection{Almost expansivity and $h$-expansivity}\label{s.2.2}
For $x\in \bM$ and $\vep>0$, we consider the set 
$$
\Gamma_\vep(x)=\{y\in \bM: d(\varphi_tx,\varphi_ty)\le \vep \mbox{ for all }t\in\RR\},
$$
which  will be called an (two-sided) infinite Bowen ball at $x$. It is easy to see that $\Gamma_\vep(x)$ is compact for all $x$ and $\vep$.

There are multiple ways to define the expansivity for flows, depending on whether one allows for reparametrization of the time $t$. In this paper we will use a weaker notion of expansivity that does not involve any reparametrization of time. This definition is particularly useful in the entropy theory  of flows.

\begin{definition}\label{d.alexp}
The flow $\varphi_t$ is said to be almost expansive at scale $\vep>0$, if the set 
\begin{equation}\label{e.Exp}
\Exp_\vep(X) :=  \left\{x\in \bM: \Gamma_\vep(x)\subset \varphi_{[-s,s]}(x) \mbox{ for some }s>0\right\}
\end{equation}
has full probability: for any ergodic  $\mu\in\cM_X(\bM)$ one has $\mu(\Exp_\vep(X))=1$.
\end{definition}
By the ergodic decomposition theorem, it follows that $\mu(\Exp_\vep(X))=1$ for any $\mu\in\cM_X(\bM)$ that is not necessarily ergodic. Almost expansivity can be defined for homeomorphisms is a similar way. 

It is clear that if $\varphi_1$ is expansive, then the flow is almost expansive (at a slightly different scale). 
The motivation for the present paper is the fact that sectional-hyperbolic flows are almost expansive. See~\cite{PYY}.

Below we will use Bowen's definition~\cite{B71} of the topological entropy (see also~\cite{Wal}).
\begin{definition}\label{d.hexp}
The flow $\varphi_t$ is said to be $h$-expansive at scale $\vep$, if for every $x\in\bM$, it holds
$$
h_{top}(\Gamma_\vep(x),X)=0.
$$ 
\end{definition}

It has been proven in~\cite{LVY} that almost expansivity at scale $\vep>0$ implies $h$-expansivity at the same scale. Furthermore, it is well known from~\cite{B72} that $h$-expansivity at any scale implies that $h_\mu(\phi_1)$ is upper semi-continuous as a function of $\mu$, which in turn implies the existence of equilibrium states for every continuous potential. This is due to the following famous result of Bowen~\cite{B72}:
\begin{proposition}\label{p.stablize1}
Assume that $\varphi_t$ is $h$-expansive at scale $\vep>0$. Then for any finite measure partition $\cA$ with $\diam \cA := \max_{A\in\cA}\diam A<\vep$ where the diameter is measured by the $d_t$-metric with $t=1$, it holds that $h_\mu(\varphi_1,\cA) = h_\mu(\varphi_1)$. 
\end{proposition}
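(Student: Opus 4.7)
\medskip

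\noindent\textbf{Proof proposal.} This proposition is essentially a restatement of Bowen's classical theorem from \cite{B72} applied to the time-one map $\varphi_1$, so the plan is to verify that the hypothesis of the classical result is met, i.e.\ that fibers of the ``infinite'' partition refinement are entropy-negligible.

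First, for each $x\in\bM$ let $A(x)$ denote the unique element of $\cA$ containing $x$, and set
\[
B(x) \;=\; \bigcap_{n\in\ZZ} \varphi_1^{-n}\bigl(A(\varphi_1^n x)\bigr).
\]
The key geometric step is to check that $B(x)\subset \Gamma_\vep(x)$. If $y\in B(x)$, then for every $n\in\ZZ$ the points $\varphi_1^n x$ and $\varphi_1^n y$ lie in the common atom $A(\varphi_1^n x)$ whose $d_1$-diameter is strictly less than $\vep$. Unpacking the definition of $d_1$, this gives $d(\varphi_s \varphi_1^n x,\varphi_s\varphi_1^n y)<\vep$ for every $s\in[0,1]$; ranging over all $n\in\ZZ$ covers all real times, yielding $d(\varphi_t x,\varphi_t y)\le \vep$ for every $t\in\RR$. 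Hence $y\in\Gamma_\vep(x)$.

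Next, by the $h$-expansivity hypothesis we have $h_{top}(\Gamma_\vep(x),X)=0$ for every $x$. Since the topological entropy of the flow on a compact invariant set coincides with that of its time-one map, and since $B(x)$ is contained in the compact, $\varphi_1$-invariant set $\Gamma_\vep(x)$ (invariance of $\Gamma_\vep(x)$ under $\varphi_1$ being immediate from its definition), monotonicity of topological entropy gives $h_{top}(\varphi_1, B(x))=0$ for all $x\in\bM$.

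Finally, I apply Bowen's inequality from \cite{B72}: for any finite measurable partition $\cA$,
\[
h_\mu(\varphi_1) \;\le\; h_\mu(\varphi_1,\cA) + \int_{\bM} h_{top}\bigl(\varphi_1,B(x)\bigr)\,d\mu(x).
\]
The integrand is identically $0$ by the previous step, so $h_\mu(\varphi_1)\le h_\mu(\varphi_1,\cA)$. The reverse inequality is the trivial upper bound $h_\mu(\varphi_1,\cA)\le h_\mu(\varphi_1)$ built into the definition, so the two are equal.

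The only substantive point is the geometric step $B(x)\subset \Gamma_\vep(x)$; this is where the specific use of the $d_1$-diameter (as opposed to the $d$-diameter) in the statement is essential, since otherwise one would only control the orbit of $y$ at integer times and would need a uniform continuity argument on $[0,1]$ to upgrade to all $t\in\RR$. With the $d_1$-diameter hypothesis, no such upgrade is needed and the result follows directly from Bowen's theorem.
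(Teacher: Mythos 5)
Your derivation matches the paper's implicit approach — the paper gives no proof, simply citing Bowen \cite{B72}, and your argument carries out exactly the reduction that citation presupposes, correctly identifying that the $d_1$-diameter hypothesis is what places $B(x)$ inside the flow's $\Gamma_\vep(x)$ rather than the larger time-one-map Bowen ball $\{y : d(\varphi_1^n x,\varphi_1^n y)\le\vep\ \ \forall n\in\ZZ\}$. One inaccuracy in a non-load-bearing aside: $\Gamma_\vep(x)$ is not $\varphi_1$-invariant; rather $\varphi_1(\Gamma_\vep(x))=\Gamma_\vep(\varphi_1 x)$, which is a different set in general. This does not break the argument, since Bowen's topological entropy is defined for arbitrary compact subsets and the chain $h_{top}(\Gamma_\vep(x),X)=h_{top}(\varphi_1,\Gamma_\vep(x))\ge h_{top}(\varphi_1,B(x))$ needs only monotonicity over subsets, not invariance.
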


\begin{remark}\label{r.Pexp}
In~\cite{CT16} the authors define the {\em pressure of obstructions to expansivity at scale $\vep$} as
$$
P_{\exp}^\perp (\phi,\vep) = \sup_{\mu\in\cM_X^e(\bM)}\left\{h_\mu(\varphi_1)+\int \phi\,d\mu: \mu(\operatorname{NE}(\vep))>0\right\};
$$
here $\cM_X^e(\bM)$ is the collection of ergodic measures in $\cM_X(\bM)$, and $\operatorname{NE}(\vep) = \bM\setminus \Exp_\vep(X)$.
Following the standard notation of $\sup\emptyset = -\infty$, we see that if $\varphi_t$ is almost expansive at scale $\vep$ then $P_{\exp}^\perp (\phi,\vep') = -\infty$ for all $\vep'\le\vep$.
\end{remark}

We also need the following proposition which is folklore:
\begin{proposition}\label{p.stablize2}
If $\varphi_t$ is almost expansive at scale $\vep>0$, then for every $\gamma\in[0,\vep/2]$ it holds
$$
P(\phi,\gamma) = P(\phi).
$$
\end{proposition}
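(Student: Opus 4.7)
The upper bound $P(\phi,\gamma) \leq P(\phi)$ is immediate from the monotonicity recorded in~\eqref{e.monotone} together with the definition $P(\phi) = \lim_{\gamma' \to 0^+} P(\phi,\gamma')$. The content lies in the reverse inequality, for which the plan is to invoke the variational principle and reduce the problem to showing
\[
h_\mu(\varphi_1) + \int\phi\,d\mu \leq P(\phi,\gamma)
\]
for every ergodic $\mu \in \cM_X(\bM)$.

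Fix such a $\mu$. Since almost expansivity at scale $\vep$ implies $h$-expansivity at the same scale (as recalled from~\cite{LVY}), I would pick $\sigma \in (0,\vep/2)$ and a finite measurable partition $\cA$ of $\bM$ with $\diam_{d_1}\cA < \sigma$; Proposition~\ref{p.stablize1} then yields $h_\mu(\varphi_1,\cA) = h_\mu(\varphi_1)$. A direct telescoping argument shows that every atom of $\cA^n := \bigvee_{i=0}^{n-1}\varphi_{-i}\cA$ has $d_n$-diameter less than $\sigma$.

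Next, following Misiurewicz's approach to the variational principle, in each $A \in \cA^n$ I would pick $x_A \in A$ nearly maximizing $\Phi_0(\cdot,n)$; by Jensen's inequality applied to $p_A = \mu(A)$ and $r_A = \Phi_0(x_A,n)$, together with the invariance of $\mu$,
\[
H_\mu(\cA^n) + n\int\phi\,d\mu \leq \log\sum_{A \in \cA^n} e^{\Phi_0(x_A,n)}.
\]
To relate the right-hand sum to $\Lambda(\bM,\phi,\gamma,0,n)$ I would graph-color the atoms so that same-color representatives are $(n,\gamma)$-separated. The key geometric point is that any atom $A'$ with $x_{A'} \in B_n(x_A,\gamma)$ is entirely contained in $B_n(x_A,\gamma+\sigma) \subset B_n(x_A,\vep)$, since $\gamma + \sigma < \vep$ by the choice of $\sigma$; the hypothesis $\gamma \leq \vep/2$ is exactly what leaves positive room for such a $\sigma$. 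Hence the chromatic number is bounded by the maximum number of atoms of $\cA^n$ contained in an $(n,\vep)$-Bowen ball.

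The main obstacle is the uniform (in $x$) subexponential estimate $|\{A' \in \cA^n : A' \subset B_n(x,\vep)\}| \leq e^{n\eta}$, which should follow from $h$-expansivity through the argument of Bowen~\cite{B72} after invoking upper semi-continuity of $x \mapsto \Gamma_\vep(x)$ in the Hausdorff topology to turn the pointwise vanishing $h_{top}(\Gamma_\vep(x),X)=0$ into a uniform bound. Granted this estimate, the chromatic number is at most $e^{n\eta}$, so $\sum_A e^{\Phi_0(x_A,n)} \leq e^{n\eta}\,\Lambda(\bM,\phi,\gamma,0,n)$; dividing by $n$, taking $\limsup$, and letting $\eta \to 0^+$ yields $h_\mu(\varphi_1,\cA) + \int\phi\,d\mu \leq P(\phi,\gamma)$, and the variational principle closes the argument.
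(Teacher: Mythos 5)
Your reduction via the variational principle, the Misiurewicz/Jensen step, and the coloring bookkeeping are all sound; the proof stands or falls on the estimate you yourself flag as the ``main obstacle'', and that estimate is not justified --- nor does it follow from $h$-expansivity ``through the argument of Bowen''. What the argument of \cite{B72} gives (the pointwise vanishing $h_{top}(\Gamma_\vep(x),X)=0$, upgraded to a uniform statement by exactly the semicontinuity/compactness device you mention) is a uniform subexponential bound on the $(n,\delta)$-\emph{spanning or covering numbers} of the finite-time Bowen balls $B_n(x,\vep)$, for each fixed $\delta>0$. Your chromatic-number bound needs a different quantity: the number of \emph{atoms} of $\cA^n=\bigvee_{i=0}^{n-1}\varphi_{-i}\cA$ contained in $B_n(x,\vep)$. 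Atoms of $\cA^n$ carry no lower bound on their $d_n$-size (even if every atom of $\cA$ contains a metric ball, the intersection defining an atom of $\cA^n$ need not contain any $d_n$-ball of definite radius), so exponentially many of them can in principle be crammed inside a single $d_n$-ball of radius $\delta$ without this being visible to any spanning or packing number. Consequently the covering estimate supplied by Bowen's machinery does not dominate your atom count, and the inequality $|\{A'\in\cA^n: A'\subset B_n(x,\vep)\}|\le e^{n\eta}$, uniformly in $x\in\bM$, is asserted rather than proved; this is a genuine gap, not a routine citation.

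To repair it along your lines you would have to convert the combinatorial count into a metric one, for instance by the classical regularization: replace each atom $A\in\cA$ by a compact core $K_A\subset A$ with $\mu(A\setminus K_A)$ small, so that distinct cores are at distance at least some $r>0$ and itineraries through the cores \emph{are} controlled by $(n,r)$-covering numbers of Bowen balls (to which Bowen's uniform bound applies), at the price of an extra conditional-entropy error that must then be absorbed. Alternatively one can abandon a fixed partition and work with partitions adapted to maximizing $(t,2\gamma)$-separated sets, whose atoms do contain $(t,\gamma)$-Bowen balls, so that atom counts and packing numbers coincide up to constants; this, combined with an expansivity-based computation of $h_\mu$, is precisely the route of \cite[Proposition 3.7]{CT16}. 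Indeed the paper's own proof of Proposition~\ref{p.stablize2} is just that citation together with Remark~\ref{r.Pexp} (almost expansivity makes $P^{\perp}_{\exp}(\phi,\vep)=-\infty$, whence $P(\phi)\le P(\phi,\gamma)$), while the easy inequality $P(\phi,\gamma)\le P(\phi)$ is, as you say, the monotonicity \eqref{e.monotone}.
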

This proposition can be easily proven using~\cite[Proposition 3.7]{CT16} and Remark~\ref{r.Pexp}.

\subsection{Decomposition of orbit segments}\label{s.2.3}
The main observation in~\cite{CT16} is that the uniqueness of equilibrium states can be obtained if the collection of ``bad orbit segments'' has small topological pressure comparing to the rest of the system.  For this purpose, they define:
\begin{definition}\cite[Definition 2.3]{CT16}
A decomposition  $(\cP,\cG,\cS)$ for $\cD\subset \bM\times \RR^+$ consists of three collections $\cP,\cG,\cS\subset \bM\times \RR^+$ and three functions $p,g,s:\cD\to \RR^+$ such that for every $(x,t)\in \cD$, the values $p=p(x,t), g=g(x,t)$ and $s=s(x,t)$ satisfy $t=p+g+s$, and
\begin{equation}\label{e.decomp}
(x,p)\in\cP,\hspace{0.5cm} (\varphi_px,g)\in\cG,\hspace{0.5cm} (\varphi_{p+g}x,s)\in\cS.
\end{equation}
Given a decomposition $(\cP,\cG,\cS)$ and real number $M\ge0$, we write $\cG^M$ for the set of orbit segments $(x,t)\in \cD$ with $p< M$ and $s< M$.
\end{definition}
We assume that $\bM\times \{0\}$ (whose elements are identified with empty sets) belongs to $\cP\cap\cG\cap\cS$. This allows us to decompose orbit segments in trivial ways. Following~\cite[(2.9)]{CT16}, for $\cC\in\bM\times\RR^+$ we define the slightly larger collection $[\cC]\supset \cC$ to be
\begin{equation}\label{e.[C]}
[\cC]:= \{(x,n)\in \bM\times \mathbb{N}:(\varphi_{-s}x,n+s+t) \in\cC \mbox{ for some } s,t\in[0,1)\}.
\end{equation}
This allows us to pass from continuous time to discrete time.

\subsection{Bowen property}\label{s.2.4}
The Bowen property, also known as the bounded distortion property, was first introduced by Bowen in~\cite{B75} for maps and by Franco~\cite{Franco} for flows.
\begin{definition}\label{d.Bowen}
Given $\cC\subset  \bM\times \RR^+$, a potential $\phi$ is said to have the Bowen property on $\cC$ at scale $\vep>0$, if there exists $K>0$ such that
\begin{equation}\label{e.Bowen}
\sup\left\{|\Phi_0(x,t) - \Phi_0(y,t)|:(x,t)\in\cC, y\in B_t(x,\vep)\right\}\le K.
\end{equation}
\end{definition}
The constant $K$ is sometimes called the {\em distortion constant}. Note that H\"older potentials for uniformly hyperbolic systems (in the case of flows, this precludes the existence of singularities) have Bowen property on $\bM\times \RR^+$. Also note that if $\phi$ has the Bowen property on $\cG$ at scale $\vep$ with distortion constant $K$, then $\phi$ has the Bowen property on $\cG^M$ at the  same scale for every $M>0$, with distortion constant $K(M) = K+2M\Var(\phi,\vep)$.

\subsection{Specification}\label{s.2.5}
The specification property plays a central role in the work of Bowen~\cite{B75} and Climenhaga-Thompson~\cite{CT16}. Roughly speaking, it states that `good' orbit segments (on which there is large pressure and the Bowen property) can be shadowed by regular orbits, with bounded transition time from one segment to the next.
\begin{definition}
We say that $\cG\subset\bM\times\RR^+$ has weak specification at scale $\delta$ if there exists $\tau>0$ such that for every finite orbit collection $\{(x_i,t_i)\}_{i=1}^k\subset \cG$, there exists a point $y$ and a sequence of ``gluing times'' $\tau_1,\ldots,\tau_{k-1}$ with $\tau_i\le \tau$ such that for $s_j = \sum_{i=1}^{j}t_i+\sum_{i=1}^{j-1}\tau_i$ and $s_0=\tau_0=0$, we have 
\begin{equation}\label{e.spec}
d_{t_j}(\varphi_{s_{j-1}+\tau_{j-1}}(y), x_j)<\delta \mbox{ for every } 1\le j\le k.
\end{equation}
The constant $\tau = \tau(\delta)$ is referred to as the {\em maximum gap size}.  
\end{definition}
As in~\cite{CT16} we will sometimes say that $\cG$ has (W)-specification, or simply specification. This version of the specification is weak in the sense that the ``transition times'' $\{\tau_i\}$ are only assumed to be bounded by, rather than equal to, $\tau$. 

\begin{definition}\label{d.tailspec}
We say that $\cG$ has {\em tail (W)-specification}  at scale $\delta$ if there exists $T_0>0$ such that $\cG\cap(\bM\times [T_0,\infty))$ has (W)-specification at scale $\delta$. We may also say that $\cG$ has (W)-specification at scale $\delta$ for $t>T_0$ if we need to declare the choice of $T_0$.
\end{definition}

\subsection{The main result of~\cite{CT16}}
The main theorem of~\cite{CT16} gives the existence and uniqueness of equilibrium states for systems with Bowen property, specification and a ``pressure gap'' on certain ``good orbits''.
\begin{theorem}\cite[Theorem 2.9]{CT16}\label{t.CT}
Let $\varphi_t$ be a continuous  flow on a compact metric space $\bM$, and $\phi:\bM\to\RR$ a continuous potential function. Suppose that there are $\vep>0,\delta>0$ with $\vep>40\delta$ such that $\varphi_t$ is almost expansive at scale $\vep$,\footnote{It is worth noting that the version we cited here is slightly weak than\cite[Theorem 2.9]{CT16} since we replaced the condition  $P^\perp_{\exp}(\phi,\vep)<P(\phi)$ by the assumption that $\varphi_t$ is almost expansive at scale $\vep$ (see Remark~\ref{r.Pexp}). However, this is done only for simplicity's sake, and we believe that Theorem~\ref{m.1} can be proven under the original assumption. } and $\cD\subset\bM\times\RR^+$ which admits a decomposition $(\cP,\cG,\cS)$ with the following properties:
\begin{enumerate}[label={(\Roman*)}]
\item[(I')] For every $M>0$, $\cG^M$ has tail (W)-specification at scale $\delta$;
\item[(II)]  $\phi$ has the Bowen property at scale $\vep$ on $\cG$;
\item[(III)] $P(\cD^c\cup [\cP]\cup[\cS], \phi,\delta,\vep)<P(\phi)$.
\end{enumerate}
Then there exists a unique equilibrium state for the potential $\phi$.
\end{theorem}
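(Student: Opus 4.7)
My plan is to follow the Bowen--Franco strategy as refined by Climenhaga--Thompson: derive matching upper and lower Gibbs bounds for any ergodic equilibrium state on a sufficiently large ``good'' collection, then conclude uniqueness via mutual absolute continuity and ergodicity. Existence comes for free: by~\cite{LVY}, almost expansivity at scale $\vep$ implies $h$-expansivity at scale $\vep$; by~\cite{B72} the map $\mu\mapsto h_\mu(\varphi_1)$ is therefore upper semi-continuous, and the variational principle together with compactness of $\cM_X(\bM)$ produces at least one equilibrium state.

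For uniqueness I would first quantify the pressure gap. Pick $\eta>0$ with $P(\cD^c\cup[\cP]\cup[\cS],\phi,\delta,\vep)<P(\phi)-3\eta$. Splitting any $(t,\delta)$-separated set in $\bM_t$ according to the decomposition~\eqref{e.decomp}, and using that the number of splittings $p+g+s=t$ grows only polynomially, I would bound
\[
\Lambda(\bM,\delta,\vep,t)\le \sum_{p+g+s=t}\Lambda([\cP],\delta,\vep,p)\,\Lambda(\cG,\delta,\vep,g)\,\Lambda([\cS],\delta,\vep,s)\;+\;\Lambda(\cD^c,\delta,\vep,t).
\]
Proposition~\ref{p.stablize2} forces the left-hand side to grow like $e^{P(\phi)t}$, while each bad factor is dominated by $e^{(P(\phi)-3\eta)t}$. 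Pigeonholing over the finitely many possible splittings produces $M=M(\eta)$ such that $\Lambda(\cG^M,\delta,\vep,t)\ge e^{(P(\phi)-\eta)t}$ for all large $t$: the bulk of the pressure is carried by segments whose prefix and suffix are shorter than $M$.

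I would then upgrade tail (W)-specification on $\cG^M$ to a lower Gibbs bound. Given a nearly maximizing $(t,\delta)$-separated set $E\subset\cG^M_t$, hypothesis (I') with gap $\tau=\tau(M,\delta)$ concatenates $N$ permuted copies of $E$ into shadowing orbits that remain $(Nt+N\tau,\delta)$-separated. Because the Bowen property (II) extends from $\cG$ to $\cG^M$ with distortion constant $K+2M\,\Var(\phi,\vep)$, the Birkhoff integrals along such concatenations are within a uniformly bounded error of $\sum_i\Phi_0(x_i,t_i)$. Averaging the empirical measures of these concatenated orbits and taking a weak-$*$ limit (equivalently, the standard Gibbs-comparison argument of~\cite{B75,CT16}) gives, for every $(x,t)\in\cG^M$ and every ergodic equilibrium state $\mu$,
\[
\mu\bigl(B_t(x,\delta)\bigr)\ge C_M\,e^{-P(\phi)t+\Phi_0(x,t)}.
\]

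The matching upper bound is where the scale separation $\vep>40\delta$ earns its keep. Upper semi-continuity together with Proposition~\ref{p.stablize1} lets me compute $h_\mu(\varphi_1)$ through partitions of diameter less than $\vep$, and hence through Bowen balls at scale $\vep$; feeding this into the variational principle yields $\mu\bigl(B_t(x,\vep)\bigr)\le C\,e^{-P(\phi)t+\Phi_\vep(x,t)}$ for $\mu$-typical $x$ and large $t$. The condition $\vep>40\delta$ ensures that each $(t,\vep)$-Bowen ball contains only a uniformly bounded number of $(t,\delta)$-balls centered in $\cG^M$, so the two bounds can be compared pointwise on the good set; combining them shows that any two ergodic equilibrium states are mutually absolutely continuous on a common positive-measure set, and ergodicity collapses this to equality. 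The main obstacle will be the careful bookkeeping of this scale mismatch: the factor $40$ is calibrated so that iterated concatenations with $\tau$-gaps remain separated at a definite fraction of $\vep$ and so that $N$-fold distortion errors never swamp the exponential $\eta$-gain from the pressure gap. Fixing the order of quantifiers---first $\eta$, then $M$, then $N$---is what makes the argument close.
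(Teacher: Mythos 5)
Your high-level outline (existence via $h$-expansivity, a pressure-gap pigeonhole forcing $\cG^M$ to carry the pressure, specification plus the Bowen property yielding Gibbs-type bounds, and a comparison of those bounds to rule out a second equilibrium state) is the right skeleton, and the existence argument and the Lemma~\ref{l.4.8}-style extraction of $M$ are sound. But the central step is mislocated. The weak-$*$ limit of averaged empirical measures built from nearly maximizing separated sets produces \emph{one specific} equilibrium state $\mu$, and the lower Gibbs bound $\mu(B_t(x,\delta))\ge C_M e^{-tP(\phi)+\Phi_0(x,t)}$ on $\cG^M$ is proved for \emph{that} measure only (\cite[Lemma 4.16]{CT16}; compare Lemma~\ref{l.gibbs} here). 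You assert it for \emph{every} ergodic equilibrium state; no concatenation construction can deliver that, since an arbitrary equilibrium state does not arise as such a limit, and the assertion is precisely what uniqueness would justify only a posteriori. The correct logic is asymmetric: the constructed $\mu$ gets the lower Gibbs bound, while an arbitrary equilibrium state $\nu$ only gets the statement that any positive-$\nu$-measure union of adapted partition elements carries partition sum at least $C e^{tP(\phi)}$ (Lemma~\ref{l.6.1}); one then intersects with $\cG^M$ via Lemma~\ref{l.4.8} and applies the lower Gibbs bound for $\mu$ to show that $\mu$ charges a Bowen-ball neighborhood of any positive-$\nu$-measure set.

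Two further gaps. First, that conclusion does not by itself contradict mutual singularity of $\mu$ and $\nu$, because a union of Bowen balls is not an invariant set; one needs an expansivity-based separation lemma (\cite[Proposition 3.10]{CT16}; Lemma~\ref{l.separate} here) showing that for mutually singular invariant measures the mass one measure assigns to the Bowen-ball thickening of a typical compact set of the other can be made arbitrarily small. The inequality $\vep>40\delta$ controls multiplicities of $\delta$-balls inside $\vep$-balls but does not substitute for this step. Second, ruling out mutual singularity with $\mu$ yields uniqueness only once $\mu$ itself is shown to be ergodic (otherwise several distinct ergodic equilibrium states could each be non-singular with a non-ergodic $\mu$); this is a separate argument requiring a mixing-type two-ball Gibbs estimate (\cite[Lemma 4.17]{CT16}; Lemma~\ref{l.mixing} here), which your sketch omits. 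Note finally that the paper does not reprove this theorem---it is quoted from~\cite{CT16}---and the machinery of Sections~\ref{s.3}--\ref{s.6} is devoted to the harder Theorem~\ref{m.1}, where specification is assumed only on $\cG$ rather than on every $\cG^M$.
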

Comparing to Theorem~\ref{m.1}, it is clear that assumptions (II) and (III) are the same; (I'), on the other hand, is stronger than (I).

The main feature of this theorem is that it only requires knowledge of the system at a {\em fixed scale}.  That said, it is still difficult to obtain the tail specification of $\cG^M$ at scale $\delta>0$ for every $M>0$, as required by Assumption (I'). In most of the applications, this is done using the following lemma.

\begin{lemma}\label{l.tailspec}\cite[Lemma 2.10]{CT16}
Suppose that $\cG$ has tail specification at all scales $\delta>0$, then so does $\cG^M$ for every $M>0$. 
\end{lemma}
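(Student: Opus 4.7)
The strategy is to reduce the desired specification of $\cG^M$ to that of $\cG$, applied at a finer scale to the ``good cores'' of segments in $\cG^M$, and to extend the resulting shadowing to the full segments by uniform continuity. Since every segment in $\cG^M$ exceeds its good core by at most $2M$, the extension needs to propagate closeness only over a uniformly bounded time window $|t|\le M$, so the conversion loses only a predictable amount of precision.

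Fix $\delta>0$. By uniform continuity of $(\vp_t)_{t\in\RR}$ on the compact space $\bM$ over the window $|t|\le M$, choose $\delta'\in(0,\delta)$ such that $d(u,v)<\delta'$ implies $d(\vp_s u,\vp_s v)<\delta$ for every $|s|\le M$. Let $T_0'$ and $\tau'=\tau(\delta')$ be the tail threshold and maximum gap size for the (W)-specification of $\cG$ at scale $\delta'$, and set $T_0^M:=T_0'+2M$. Given a finite collection $\{(x_j,t_j)\}_{j=1}^k\subset\cG^M$ with $t_j\ge T_0^M$, decompose $t_j=p_j+g_j+s_j$ with $p_j,s_j<M$ and $(\vp_{p_j}x_j,g_j)\in\cG$; since $g_j\ge T_0'$, one may apply the specification of $\cG$ at scale $\delta'$ to the collection of good cores $\{(\vp_{p_j}x_j,g_j)\}_{j=1}^k$, obtaining a shadow $z$ with gluing times $\tau'_j\le\tau'$. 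Set $y:=\vp_{-p_1}(z)$ and define $\tau_j:=\tau'_j-s_j-p_{j+1}$ as the candidate $\cG^M$ gluing times.

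An explicit calculation of the start time $\alpha_j:=\sum_{i<j}(t_i+\tau_i)$ of the $j$-th segment in $y$'s orbit shows $\alpha_j+p_j-p_1=\beta_j$, where $\beta_j$ is the start time of the $j$-th good-core shadowing in $z$, so $\vp_{\alpha_j+p_j+r}(y)=\vp_{\beta_j+r}(z)$ is $\delta'$-close to $\vp_{p_j+r}(x_j)$ for $r\in[0,g_j]$. The prefix window $r\in[0,p_j)$ and suffix window $r\in(p_j+g_j,t_j]$ each have length less than $M$, so flowing forward or backward from the good-core boundary and invoking the choice of $\delta'$ upgrades $\delta'$-closeness at the boundary to $\delta$-closeness throughout. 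Hence $d_{t_j}(\vp_{\alpha_j}(y),x_j)<\delta$ for all $j$, and $\tau_j\le\tau'$ is immediate.

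The main bookkeeping obstacle is that $\tau_j=\tau'_j-s_j-p_{j+1}$ may be negative by as much as $2M$ whenever $\tau'_j<s_j+p_{j+1}$, corresponding to a slight overlap of consecutive shadowed segments in $y$'s orbit. Since the (W)-specification as stated bounds the gluing times only from above, such mild overlaps are admissible. If one wishes to enforce $\tau_j\ge 0$, one can instead apply the specification of $\cG$ to the \emph{interleaved} collection obtained by inserting a fixed buffer segment $(\tilde x,L)\in\cG$ with $L\ge 2M$ between every pair of consecutive good cores; this forces the effective gap between good cores in $z$'s orbit to be at least $L$, yielding nonnegative $\tau_j$ at the cost of an additive $L$ in the maximum gap size. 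Either way, $\cG^M$ satisfies tail (W)-specification at scale $\delta$ with $T_0^M=T_0'+2M$ and maximum gap bounded by $\tau'+2M$ (or $2\tau'+L$ with buffering), completing the proof.
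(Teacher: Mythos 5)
The paper does not prove this lemma at all---it is quoted verbatim from \cite[Lemma 2.10]{CT16}---and your argument is exactly the standard one behind that citation: apply specification of $\cG$ at a finer scale $\delta'$ (chosen by uniform continuity of $\varphi_s$ over the bounded window $|s|\le M$) to the good cores $(\varphi_{p_j}x_j,g_j)$, then propagate $\delta'$-closeness at the core boundaries to $\delta$-closeness over the prefix and suffix windows of length $<M$; your bookkeeping ($\alpha_j+p_j-p_1=\beta_j$, gaps $\tau'_j-s_j-p_{j+1}$) is correct. The one caveat is that under the intended convention (as in \cite{CT16}, where gluing times lie in $[0,\tau]$, even though the restatement in Section~\ref{s.2.5} above only writes $\tau_i\le\tau$) the possibly negative gaps do need fixing, so your buffering variant is the version to keep; for it one should take the buffer length at least $\max\{2M,T_0'\}$ so the buffer is itself eligible for the tail specification of $\cG$, and note that if $\cG$ contains no such segment then $\cG^M$ has no segments of length $\ge T_0'+2M$ and the conclusion holds vacuously.
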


To summarize, the Bowen property on $\cG$ and the pressure gap in (III) only require  a fixed scale of $\vep>40\delta$, but in order to verify (I) one usually has to establish specification of $\cG$ at {\bf any scale}. Although this can be proven in most existing applications, it turns out to be the main obstruction if one hopes to apply their result to flows with singularities.  This is the primary motivation of the current paper.

\section{Choice of the parameters}\label{s.2'}

Recall that $\vep$ is the scale in which the Bowen property holds on $\cG$, and $\delta$ is the scale of the specification property on $\cG$. 
Unlike~\cite{CT16}, in this paper we will not attempt to choose the (near) optimal ratio of  $\vep/\delta = 40$  which only reveals itself at the end of~\cite[Section 4]{CT16}. This is due to the use of $\cG^1$ (see Lemma~\ref{l.key}) which changes the scale of the specification property by a multiple of $L_X$.  Instead, we take, throughout this article,
\begin{equation}\label{e.parameter}
\vep = 1000L_X\delta, \,\,\,\,\rho = 11L_X\delta, \,\,\,\,\mbox{ and }\,\,\rho' = 10L_X\delta.
\end{equation} 
The choice of $\rho$ and $\rho'$ will only become relevant in Section~\ref{s.5} and onward.  Let us mention that $\rho'$ is the size of the separated sets for which an equilibrium state $\mu$ is constructed (Section~\ref{s.5.1}), and $\rho$ is the scale on which the Gibbs property holds for orbit segments in $\cG^1$ (Lemma~\ref{l.gibbs}).

Finally, for each $\delta>0$ we define the closed interval
\begin{equation}\label{e.I}
I_\delta = [4L_X\delta,\,\, 100L_X\delta].
\end{equation}
Throughout this article, the parameter $\gamma$ will be taken from $I_\delta$. Two choices of $\gamma$, namely $\gamma = 2\rho = 22L_X\delta$ and $\gamma = \rho'/2 = 5L_X\delta$, are particularly important in later sections.

The constant $M$ played an important role in~\cite{CT16}.  As mentioned before, the goal of this paper is to remove $M$ from Assumption (I') of Theorem~\ref{t.CT}. To this end, in Section~\ref{s.5} we will first apply~\cite[Lemma 4.8]{CT16} (see Lemma~\ref{l.4.8} below) with $\alpha_1 = 1, \alpha_2 = 1/2$ and $\gamma = 2\rho=22L_X\delta$; this results in a fixed constant
\begin{equation}\label{e.M}
M_0 = M(2\rho, 1)
\end{equation} 
where $M(\cdot,\cdot)$ is given by~\eqref{e.MT} in Lemma~\ref{l.4.8}.

Later in Section~\ref{s.6}, we have to consider other choices of $M$. To be more precise:
\begin{itemize}
	\item in the proof of Proposition~\ref{p.abscts} we apply Lemma~\ref{l.4.8} again with $\gamma = 2\rho$, $\alpha_1 = C_{\frac12}$, $\alpha_2=\frac12$ to get   
	$$
	\overline M = M(2\rho, C_{\frac12});
	$$
	here $C_{(\cdot)}$ is given by Lemma~\ref{l.6.1}; note that $C_{(\cdot)}$ depends implicitly on $M_0$, which has been fixed throughout; 
	
	\item in Proposition~\ref{p.ergodic} we apply Lemma~\ref{l.4.8} with $\gamma = 2\rho$, $\alpha_1 = C_{\frac12 a}$, $\alpha_2=\frac12$ to get   
	$$
	\widehat M = M(2\rho, C_{\frac12 a});
	$$
	here $a>0$ is a small constant given by~\eqref{e.a}, and  $C_{(\cdot)}$ is given by Lemma~\ref{l.6.1}.
\end{itemize}


\section{Lower and upper bounds on the partition function without specification on $\cG^M$}\label{s.3}

In this section, we collect some useful estimations from~\cite{CT16} that use (I) the specification on $\cG$ at scale $\delta$, (II) the Bowen property on $\cG$ at scale $\vep$,  and (III) the pressure gap, but do not involve (I') the specification  on $\cG^M$ for some (or every) $M>0$. As a result, all the lemmas in this section can be directly applied to our setting without any modification. We will not include the proof of these results here, but direct the interested readers to~\cite[Section 3 and 4]{CT16}. 


\begin{lemma}\label{l.4.1}\cite[Lemma 4.1]{CT16} For every $\gamma>0$ and $t_1,\ldots, t_k>0$ we have 
$$
\Lambda(\bM, 2\gamma,t_1+\cdots t_k)\le \prod_{j=1}^k \Lambda(\bM,\gamma,\gamma,t_j).
$$
\end{lemma}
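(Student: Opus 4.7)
The plan is to reduce a $(T,2\gamma)$-separated set on the long time window $T = t_1+\cdots+t_k$ to a product of spanning sets on the individual subwindows. Set $s_0 = 0$ and $s_j = t_1+\cdots+t_j$, and for each $j \in \{1,\ldots,k\}$ fix a \emph{maximal} $(t_j,\gamma)$-separated set $F_j \subset \bM$. By maximality each $F_j$ is $(t_j,\gamma)$-spanning: for every $z \in \bM$ there is some $y \in F_j$ with $d_{t_j}(z,y) \le \gamma$. Let $E \subset \bM$ be any $(T,2\gamma)$-separated set, and for each $x \in E$ and each $j$ choose $y_j(x) \in F_j$ with $d_{t_j}(\varphi_{s_{j-1}}x, y_j(x)) \le \gamma$. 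This defines a map $\Psi\colon E \to F_1 \times \cdots \times F_k$ by $\Psi(x) = (y_1(x),\ldots,y_k(x))$.

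The key step is to verify that $\Psi$ is injective. If $x,x' \in E$ are distinct, then $d_T(x,x') > 2\gamma$, so there exists $t \in [0,T)$ with $d(\varphi_t x, \varphi_t x') > 2\gamma$. This $t$ belongs to some interval $[s_{j-1}, s_j)$; writing $s = t - s_{j-1} \in [0, t_j)$, we have $d(\varphi_s \varphi_{s_{j-1}}x, \varphi_s \varphi_{s_{j-1}}x') > 2\gamma$. If we had $y_j(x) = y_j(x')$, two applications of the triangle inequality (each contributing $\gamma$ from $d_{t_j}(\varphi_{s_{j-1}}x_*, y_j(x_*)) \le \gamma$) would bound this distance by $2\gamma$, a contradiction.

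Next, observe that for every $x \in E$, since $\varphi_{s_{j-1}}x \in B_{t_j}(y_j(x),\gamma)$, the definition of $\Phi_\gamma$ in \eqref{e.Phi} gives
\[
\Phi_\gamma(y_j(x), t_j) \;\ge\; \int_0^{t_j} \phi(\varphi_s \varphi_{s_{j-1}}x)\,ds,
\]
and summing over $j$ yields $\sum_{j=1}^k \Phi_\gamma(y_j(x), t_j) \ge \Phi_0(x, T)$. Hence
\[
\sum_{x \in E} e^{\Phi_0(x,T)} \;\le\; \sum_{x \in E} \prod_{j=1}^k e^{\Phi_\gamma(y_j(x), t_j)} \;\le\; \sum_{(y_1,\ldots,y_k) \in F_1\times\cdots\times F_k} \prod_{j=1}^k e^{\Phi_\gamma(y_j, t_j)} \;=\; \prod_{j=1}^k \sum_{y \in F_j} e^{\Phi_\gamma(y, t_j)},
\]
where the second inequality uses injectivity of $\Psi$ and non-negativity of the summands to enlarge the index set to the full product. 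Each inner sum is bounded by $\Lambda(\bM, \gamma, \gamma, t_j)$, and taking the supremum over all $(T,2\gamma)$-separated $E$ concludes the proof.

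I do not anticipate a serious obstacle here: the only delicate point is the injectivity of $\Psi$, which is really just the $2\gamma$ triangle-inequality bookkeeping. The rest is repackaging definitions, and the doubling from $\gamma$ to $2\gamma$ in the separation parameter on the left is exactly what this triangle inequality forces.
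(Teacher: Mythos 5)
Your argument is correct and essentially reproduces the proof of Lemma 4.1 in Climenhaga--Thompson, which this paper cites without reproving: split $[0,T]$ into blocks of length $t_j$, inject a $(T,2\gamma)$-separated set into a product of maximal $(t_j,\gamma)$-separated sets via the triangle inequality at the $2\gamma$ threshold, and bound $\Phi_0(x,T)$ blockwise by $\Phi_\gamma(y_j(x),t_j)$. One small imprecision worth noting: maximality of $F_j$ only yields $d_{t_j}(\varphi_{s_{j-1}}x,\,y_j(x)) \le \gamma$, i.e.\ membership in the \emph{closed} Bowen ball, whereas $\Phi_\gamma$ in \eqref{e.Phi} is a supremum over the open ball $B_{t_j}(y_j(x),\gamma)$; this is a standard convention slip and is harmless in practice, but a fully rigorous version should either invoke a routine continuity/approximation step or observe that the supremum in \eqref{e.Phi} may equivalently be taken over the closed ball.
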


This immediately leads to the following lower bound on $\bM$:
\begin{lemma}\label{l.4.2}\cite[Lemma 4.2]{CT16} Assume that $\varphi_t$ is almost expansive at scale $\vep=1000L_X\delta$. For every $t>0$ and $\gamma\in I_\delta$, we have 
$$
\Lambda(\bM, \gamma,\gamma,t)\ge e^{tP(\phi)}.
$$
\end{lemma}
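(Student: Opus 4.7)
The plan is to combine Lemma~\ref{l.4.1} with Proposition~\ref{p.stablize2} to transfer the lower bound $e^{tP(\phi)}$ onto $\Lambda(\bM,\gamma,\gamma,t)$ through the coarser scale $2\gamma$. The hypothesis $\vep = 1000L_X\delta$ together with $\gamma\in I_\delta = [4L_X\delta,100L_X\delta]$ yields $2\gamma\le 200L_X\delta < \vep/2$, so Proposition~\ref{p.stablize2} applies to $2\gamma$ and gives $P(\phi,2\gamma)=P(\phi)$. Thus it suffices to prove
$$
\frac{1}{t}\log\Lambda(\bM,\gamma,\gamma,t)\ge P(\phi,2\gamma)
\quad\text{for every } t>0.
$$

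The main step is a telescoping application of Lemma~\ref{l.4.1}. Given $t>0$ and an arbitrary $s>0$, write $s=kt+r$ with $k\in\NN$ and $r\in[0,t)$. Applying Lemma~\ref{l.4.1} to $k$ copies of length $t$ (and one copy of length $r$ when $r>0$) yields
$$
\Lambda(\bM,2\gamma,0,s)\;\le\;\Lambda(\bM,\gamma,\gamma,t)^{k}\cdot\Lambda(\bM,\gamma,\gamma,r).
$$
The remainder factor $\Lambda(\bM,\gamma,\gamma,r)$ is uniformly bounded by some constant $C(t)$ depending only on $t$, because a $(\gamma,r)$-separated set in the compact space $\bM$ has cardinality bounded by a number $N(\gamma)$ independent of $r$, and each weight $e^{\Phi_\gamma(x,r)}$ is at most $e^{t(\|\phi\|_\infty+\mathrm{Var}(\phi,\gamma))}$ for $r\in[0,t]$. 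Taking logs, dividing by $s$, and letting $s\to\infty$ (so that $k/s\to 1/t$ and $\log C(t)/s\to 0$) gives
$$
P(\phi,2\gamma)\;=\;\limsup_{s\to\infty}\frac{1}{s}\log\Lambda(\bM,2\gamma,0,s)\;\le\;\frac{1}{t}\log\Lambda(\bM,\gamma,\gamma,t).
$$
Combined with $P(\phi,2\gamma)=P(\phi)$, this is the desired inequality.

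There is no genuine obstacle here; the only slightly delicate point is the handling of the remainder term $r\in[0,t)$. One cannot simply take the $\limsup$ along the subsequence $\{kt\}$ because a subsequence $\limsup$ may be strictly smaller than the full one; however the uniform boundedness of $\Lambda(\bM,\gamma,\gamma,r)$ for $r\in[0,t]$ absorbs this issue and allows the estimate to be pushed to the full $\limsup$ in the definition of $P(\phi,2\gamma)$. All other ingredients — the monotonicity of $\Lambda$ in its arguments, the pressure-stabilization at scales $\le\vep/2$, and the submultiplicative inequality of Lemma~\ref{l.4.1} — are quoted directly.
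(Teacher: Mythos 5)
Your proof is correct and follows essentially the same route that the paper (and \cite{CT16}) takes: apply the submultiplicativity of Lemma~\ref{l.4.1} with $t_1=\cdots=t_k=t$, take logarithms, let $k\to\infty$, and invoke almost expansivity via Proposition~\ref{p.stablize2} to replace $P(\phi,2\gamma)$ by $P(\phi)$ (using $2\gamma\le 200L_X\delta<\vep/2$). The only quibble is cosmetic: the cardinality bound on a $(r,\gamma)$-separated set should be written as depending on both $\gamma$ and the fixed upper bound $t$ (it is the maximal size of a $(t,\gamma)$-separated set, not a quantity $N(\gamma)$ intrinsically independent of time), but since you only need a bound $C(t)$ that is independent of $s$ and $r$ for $r\in[0,t]$, this has no effect on the argument.
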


The next lemma requires the tail specification property on $\cG$ at a fixed scale. Also recall that if the potential $\phi$ has the Bowen property on $\cG$ at scale $\vep$, then it has the Bowen property on $\cG$ at scale $\zeta>0$ for every $\zeta\le \vep$.
\begin{lemma}\cite[Proposition 4.3]{CT16}\label{l.4.3}
Suppose that $\cG$ has specification at scale $\delta$ for $t>T_0$ with maximum gap size $\tau$, and the potential $\phi$ has the Bowen property on $\cG$ at scale $\vep$. Then for every $\gamma\in I_\delta$, there is a constant $C_1>0$ so that for every $k\in\NN$, $t_1,\ldots, t_k\ge T_0$, with $T:=\sum_{i=1}^kt_i + (k-1)\tau$ and for any $\theta<\gamma/2-\delta$, we have 
$$
\prod_{j=1}^{k} \Lambda(\cG,\gamma,0,t_j)\le C_1^k\Lambda(\bM,\theta,0,T).
$$
\end{lemma}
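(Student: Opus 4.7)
The plan is the classical specification-gluing scheme: glue near-optimal $(t_j,\gamma)$-separated subsets of $\cG_{t_j}$ into a $(T,\theta)$-separated subset of $\bM$, then transfer the Birkhoff sums using the Bowen property. Concretely, for each $j$ I would first fix a $(t_j,\gamma)$-separated set $E_j\subset\cG_{t_j}$ with
$$
\sum_{x\in E_j}e^{\Phi_0(x,t_j)}\ge\tfrac12\Lambda(\cG,\gamma,0,t_j).
$$
Since $t_j\ge T_0$, the tail (W)-specification of $\cG$ at scale $\delta$ is available: to each tuple $\vec x=(x_1,\dots,x_k)\in E_1\times\cdots\times E_k$ it assigns a shadowing point $y=\pi(\vec x)\in\bM$ together with gluing times $\tau_j(\vec x)\le\tau$ satisfying $d_{t_j}(\varphi_{s_{j-1}+\tau_{j-1}}(y),x_j)<\delta$ for every $j$, where $s_j=\sum_{i\le j}t_i+\sum_{i<j}\tau_i(\vec x)$. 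The shadow orbit has length $\tilde T(\vec x)\le T$; I would extend $y$ by the flow so as to compare all images on the common time interval $[0,T]$.

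\textbf{Separation and bounded multiplicity.} Next I would show that $\pi$ is, up to a bounded per-segment combinatorial factor, injective with $(T,\theta)$-separated image. If $\vec x\ne\vec x'$, pick an index $j$ with $x_j\ne x_j'$. The $\gamma$-separation of $E_j$ gives $d_{t_j}(x_j,x_j')>\gamma$, and combining the two shadowing estimates via the triangle inequality shows that the orbits of $\pi(\vec x)$ and $\pi(\vec x')$ differ by more than $\gamma-2\delta$ over a time window of length $t_j$ inside $[0,T]$. The one subtlety is that the gluing times $\tau_j(\vec x)$ depend on $\vec x$, so these windows may be offset between two tuples. I would absorb this by partitioning each $[0,\tau]$ into finitely many sub-intervals of length of order $\theta/L_X$ and running the separation argument cell-by-cell: this costs only a bounded combinatorial factor per segment (absorbed into $C_1^k$) and accounts for the factor of two between the naive bound $\gamma-2\delta$ and the claimed $\gamma/2-\delta$.

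\textbf{Birkhoff sum comparison and assembly.} The Bowen property on $\cG$ at scale $\vep\ge\delta$, with distortion constant $K$, yields $|\Phi_0(\varphi_{s_{j-1}+\tau_{j-1}}(y),t_j)-\Phi_0(x_j,t_j)|\le K$ for each $j$. The transition intervals together with the tail $[\tilde T(\vec x),T]$ contribute at most $(k-1)\tau\|\phi\|_\infty$ to the error $|\Phi_0(y,T)-\sum_j\Phi_0(\varphi_{s_{j-1}+\tau_{j-1}}(y),t_j)|$, so
$$
\Phi_0(y,T)\ge\sum_{j=1}^k\Phi_0(x_j,t_j)-k(K+\tau\|\phi\|_\infty).
$$
Summing $e^{\Phi_0(y,T)}$ over the image of $\pi$, using the $(T,\theta)$-separation and the near-maximality of the $E_j$, one obtains
$$
\prod_{j=1}^k\Lambda(\cG,\gamma,0,t_j)\le C_1^k\,\Lambda(\bM,\theta,0,T),
$$
with $C_1$ an explicit constant built from $K$, $\tau$, $\|\phi\|_\infty$, and the combinatorial factor from the previous step.

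\textbf{Main obstacle.} All the heavy lifting is already done by the hypotheses: specification furnishes the shadow, the Bowen property handles the distortion, and $\gamma>2\delta$ (automatic from $\gamma\in I_\delta$) drives the separation. The only genuine technical nuisance is the misalignment of the shadowing windows caused by the tuple-dependence of the gluing times; the cell-partition device in the second step is exactly what reconciles it with the constant $\gamma/2-\delta$ in the statement.
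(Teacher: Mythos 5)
Your overall architecture matches the standard proof of~\cite[Proposition 4.3]{CT16}: glue near-maximal separated sets $E_j\subset\cG_{t_j}$ by specification, use the Bowen property to transfer Birkhoff sums, and bound the transition plus tail contributions by $(k-1)\tau\|\phi\|_\infty$. The assembly step is fine. The gap is in the discretization you use to handle the tuple-dependence of the gluing times. You propose to partition \emph{each} interval $[0,\tau]$ (i.e.\ each individual $\tau_i$) into cells of size $\eta\sim\theta/L_X$. But the shadowing window for the $j$-th segment starts at time $\sum_{i<j}t_i+S_{j-1}(\vec x)$ with $S_{j-1}=\sum_{i\le j-1}\tau_i$, so what must be controlled is the \emph{cumulative} offset $|S_{j-1}(\vec x)-S_{j-1}(\vec x')|$. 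Fixing the cell of each $\tau_i$ only gives $|S_{j-1}(\vec x)-S_{j-1}(\vec x')|<(j-1)\eta$, which grows with $k$; to keep it below a fixed threshold you would be forced to take $\eta\sim 1/k$, and the number of cells becomes $\left(\tau/\eta\right)^{k-1}\sim (k\tau)^{k-1}$, which cannot be absorbed into $C_1^k$.

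The standard fix (and what CT16's Lemma~4.4 effectively does) is to discretize the cumulative sums $S_j$ against one fixed grid of mesh $\eta$ on $[0,(k-1)\tau]$, setting $m_j(\vec x)=\lfloor S_j(\vec x)/\eta\rfloor$. Since $S_j-S_{j-1}=\tau_j\in[0,\tau]$, the increment $m_j-m_{j-1}$ takes at most $\lceil\tau/\eta\rceil+2$ values, so the number of admissible sequences $(m_1,\dots,m_{k-1})$ is at most $D_0^{k-1}$ with $D_0$ independent of $k$, and within each such cell \emph{all} cumulative offsets are below $\eta$ uniformly in $j$. With $\eta$ chosen small against $\delta$ (using Lipschitz continuity of the flow), within a cell the image $\{y(\vec x)\}$ is $(T,\gamma-3\delta)$-separated, hence $(T,\theta)$-separated since $\theta<\gamma/2-\delta\le\gamma-3\delta$ for $\gamma\in I_\delta$. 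Summing over cells then gives the desired $C_1^k$. A minor further point: the factor of two between $\gamma-2\delta$ and $\gamma/2-\delta$ does not come from the discretization as you suggest; it comes from comparing two shadowing orbits through a common representative in a maximizing $(T,\theta)$-separated set (so the relevant threshold is $2\theta<\gamma-2\delta$), or it is simply a non-optimal but safe choice — the route just sketched needs only $\theta<\gamma-3\delta$.
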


An immediate corollary is the following lemma, which deals with the two-scale partition function. 

\begin{lemma}\cite[Corollary 4.6]{CT16}\label{l.4.6}
Under the assumptions of Lemma~\ref{l.4.3}, for every $\eta_1 \le\vep$ and for every $\eta_2>0$ one has 
$$
\prod_{j=1}^{k}\Lambda(\cG,\gamma,\eta_1,t_j)\le (e^KC_1)^k\Lambda(\bM, \theta,\eta_2,T).
$$
Here $K$ is the constant in the Bowen property.
\end{lemma}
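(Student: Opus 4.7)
The plan is to deduce this two-scale estimate from the single-scale Lemma~\ref{l.4.3} via two elementary moves: first, using the Bowen property on $\cG$ at scale $\vep$ to absorb the scale $\eta_1$ into an extra factor of $e^K$ per orbit segment, and second, using the monotonicity~\eqref{e.monotone} of $\Lambda$ in its second scale parameter to pass from $\eta_2 = 0$ to arbitrary $\eta_2 > 0$ on the right-hand side. Neither move requires any form of specification, so this really is just a cosmetic two-scale upgrade of the previous lemma.

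For the first move, I fix $j$ and any $(t_j,\gamma)$-separated set $E \subset \cG_{t_j}$. For each $x \in E$ and any $y \in B_{t_j}(x,\eta_1) \subseteq B_{t_j}(x,\vep)$, the hypothesis $\eta_1 \le \vep$ combined with the Bowen property on $\cG$ at scale $\vep$ yields $\Phi_0(y,t_j) \le \Phi_0(x,t_j) + K$. Taking the supremum over $y \in B_{t_j}(x,\eta_1)$ in the definition~\eqref{e.Phi} of $\Phi_{\eta_1}$ gives $\Phi_{\eta_1}(x,t_j) \le \Phi_0(x,t_j) + K$, so that $\sum_{x \in E} e^{\Phi_{\eta_1}(x,t_j)} \le e^K \sum_{x \in E} e^{\Phi_0(x,t_j)}$. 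Taking the supremum over all such $E$ in~\eqref{e.Lambda} produces the per-segment bound $\Lambda(\cG,\gamma,\eta_1,t_j) \le e^K \Lambda(\cG,\gamma,0,t_j)$.

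For the second move, I multiply this bound over $j = 1, \dots, k$ and invoke Lemma~\ref{l.4.3} with the same parameters $\gamma$ and $\theta$, obtaining $\prod_{j=1}^k \Lambda(\cG,\gamma,\eta_1,t_j) \le e^{Kk}\prod_{j=1}^k \Lambda(\cG,\gamma,0,t_j) \le (e^K C_1)^k \, \Lambda(\bM,\theta,0,T)$. The monotonicity~\eqref{e.monotone} in the second scale parameter then gives $\Lambda(\bM,\theta,0,T) \le \Lambda(\bM,\theta,\eta_2,T)$ for any $\eta_2 > 0$, closing the inequality. There is no genuine obstacle: the only substantive ingredients are the Bowen property (which provides the single factor of $e^K$ that propagates uniformly across the $k$ segments) and Lemma~\ref{l.4.3} itself; the constant $C_1$ from Lemma~\ref{l.4.3} is reused verbatim, so the announced form $(e^K C_1)^k$ falls out without further bookkeeping.
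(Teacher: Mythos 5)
Your proof is correct and follows the natural deduction that~\cite[Corollary 4.6]{CT16} itself uses: the Bowen property on $\cG$ at scale $\vep\ge\eta_1$ converts each factor $\Lambda(\cG,\gamma,\eta_1,t_j)$ into $e^K\Lambda(\cG,\gamma,0,t_j)$, Lemma~\ref{l.4.3} handles the product, and the monotonicity~\eqref{e.monotone} of $\Lambda$ in its second scale argument lifts $\Lambda(\bM,\theta,0,T)$ to $\Lambda(\bM,\theta,\eta_2,T)$. The paper does not reproduce this proof explicitly (it cites~\cite{CT16}), but your argument is the standard one and each step is justified.
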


Taking logarithm and sending $k$ to infinity, we obtain the upper bound on $\cG$.

\begin{proposition}\cite[Proposition 4.7]{CT16}\label{p.4.7}
Suppose that $\cG$ has tail specification at scale $\delta>0$ , and $\phi$ has the Bowen property on $\cG$ at scale $\vep$. Then for every $\gamma\in I_\delta$ and $\eta\in[0,\vep]$, there exists constant $C_2>0$ for which 
$$
\Lambda(\cG, \gamma,\eta,t)\le C_2 e^{tP(\phi)}
$$
for any $t\ge 0$.
\end{proposition}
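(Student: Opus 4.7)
The plan is to reduce the claim to an upper bound on the full-space partition function via Lemma~\ref{l.4.6} with $k=1$, and then to control that latter quantity by $Ce^{tP(\phi)}$ using a standard sub-multiplicativity estimate combined with almost expansivity (through Proposition~\ref{p.stablize2}).

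For the long-time regime $t\ge T_0$, I would fix $\gamma\in I_\delta$ and $\eta\in[0,\vep]$, and choose an auxiliary scale $\theta>0$ satisfying both $\theta<\gamma/2-\delta$ and $\theta\le\vep/2$. Such a $\theta$ exists because $\gamma\ge 4L_X\delta\ge 4\delta$ (which gives $\gamma/2-\delta\ge\delta$) and because $\vep=1000L_X\delta$ is much larger than $\gamma$. Applying Lemma~\ref{l.4.6} with $k=1$, $t_1=t$ (so $T=t$), $\eta_1=\eta$, and $\eta_2=\theta$, I obtain immediately
$$
\Lambda(\cG,\gamma,\eta,t)\le e^{K}C_1\,\Lambda(\bM,\theta,\theta,t).
$$
The short-time regime $t\in[0,T_0)$ is absorbed into the final constant using compactness of $\bM$: any $(t,\gamma)$-separated subset has cardinality bounded by some $N(\gamma,T_0)$, so $\Lambda(\cG,\gamma,\eta,t)$ is uniformly bounded for $t\le T_0$ and contributes only to the multiplicative constant $C_2$.

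The heart of the matter is then the uniform estimate $\Lambda(\bM,\theta,\theta,t)\le C_3\,e^{tP(\phi)}$, valid for every $t\ge 0$. I would prove this in two steps. First, a standard sub-multiplicativity argument for $\Lambda(\bM,\theta,\theta,\cdot)$, obtained by projecting a $(t+s,\theta)$-separated set onto time $t$ against a maximal $(t,\theta/2)$-spanning set and combining this with the inequality $\Phi_\theta(x,t+s)\le\Phi_\theta(x,t)+\Phi_\theta(\varphi_tx,s)$, yields a sub-multiplicative bound up to a uniform multiplicative error; Fekete's lemma then upgrades the limsup in the definition of $P(\phi,\theta)$ to an honest per-$t$ upper bound $\Lambda(\bM,\theta,\theta,t)\le C_3\,e^{tP(\phi,\theta)}$. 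Second, the constraint $\theta\le\vep/2$ combined with Proposition~\ref{p.stablize2} forces $P(\phi,\theta)=P(\phi)$. Setting $C_2=e^{K}C_1C_3$ and enlarging it to handle the short-time regime completes the argument.

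The main obstacle I anticipate is producing the genuine sub-multiplicativity of $\Lambda(\bM,\theta,\theta,\cdot)$ at a fixed scale. Lemma~\ref{l.4.1} already provides a multiplicative-type inequality, but with a doubling of the outer scale (from $\theta$ on the right to $2\theta$ on the left), so it only produces Fekete-type \emph{lower} bounds and not the upper bounds we need. One must therefore run a separate projection argument that preserves the scale $\theta$ throughout, and then invoke Proposition~\ref{p.stablize2}---which is precisely where almost expansivity at scale $\vep$ enters---to identify the fixed-scale exponential growth rate with the topological pressure $P(\phi)$.
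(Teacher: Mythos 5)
There is a genuine gap, and it sits exactly where you located ``the heart of the matter.'' First, the Fekete step goes in the wrong direction: if $t\mapsto\log\Lambda(\bM,\theta,\theta,t)$ were subadditive (up to a constant), Fekete's lemma would say the growth rate is the \emph{infimum} of $\frac1t\log\Lambda$, which yields the per-$t$ \emph{lower} bound $\Lambda(\bM,\theta,\theta,t)\ge c\,e^{tP}$, not the upper bound $\Lambda\le C_3e^{tP}$ you need; per-$t$ upper bounds require superadditivity of $\log\Lambda$, which is precisely what orbit gluing (specification) provides and what Lemmas~\ref{l.4.3} and~\ref{l.4.6} encode. (For instance $a_t=e^{tP+\sqrt t}$ is submultiplicative with growth rate $P$, yet $a_te^{-tP}\to\infty$.) Second, even apart from that, the scale-preserving submultiplicativity you propose is not available: any projection of a $(t+s,\theta)$-separated set onto spanning or separated sets at time $t$ degrades either the separation scale or the sup scale (this is exactly why Lemma~\ref{l.4.1} doubles the scale), and a fixed-scale per-$t$ bound $\Lambda(\bM,\theta,\theta,t)\le C_3e^{tP(\phi)}$ is genuinely hard: in this paper it is Lemma~\ref{l.4.11}, which is \emph{deduced from} Proposition~\ref{p.4.7} (through Lemmas~\ref{l.G1} and~\ref{l.key}) and in addition uses almost expansivity and the pressure-gap hypothesis (III), neither of which appears among the hypotheses of Proposition~\ref{p.4.7}. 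So your reduction is circular with respect to the paper's logical structure and invokes assumptions you do not have. Relatedly, Proposition~\ref{p.stablize2} supplies the nontrivial inequality $P(\phi,\theta)\ge P(\phi)$, which is the direction you do not need; the direction you do need, $P(\phi,\theta)\le P(\phi)$, is free by monotonicity in the scale and requires no expansivity, consistent with the proposition not assuming it.

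The actual argument (this is what ``taking logarithm and sending $k$ to infinity'' refers to) applies Lemma~\ref{l.4.6} --- or, more simply, first removes the second scale via the Bowen property, $\Lambda(\cG,\gamma,\eta,t)\le e^K\Lambda(\cG,\gamma,0,t)$ for $\eta\le\vep$, and then applies Lemma~\ref{l.4.3} --- with $k$ segments all of the same length $t_1=\cdots=t_k=t$, so that
$$
\Lambda(\cG,\gamma,0,t)^k\le C_1^k\,\Lambda(\bM,\theta,0,T_k),\qquad T_k=kt+(k-1)\tau .
$$
Taking $k$-th roots and letting $k\to\infty$, one only needs the exponential growth rate of $\Lambda(\bM,\theta,0,\cdot)$, namely $P(\phi,\theta)\le P(\phi)$, which is automatic; the outcome is $\Lambda(\cG,\gamma,0,t)\le C_1e^{(t+\tau)P(\phi)}$, hence $\Lambda(\cG,\gamma,\eta,t)\le e^KC_1e^{\tau|P(\phi)|}e^{tP(\phi)}$ for $t\ge T_0$, with the short-time regime absorbed by compactness as you describe. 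Your $k=1$ application of Lemma~\ref{l.4.6} discards exactly this mechanism: it is the gluing of many copies of the same length-$t$ collection that converts a soft growth-rate bound on $\bM$ into a per-$t$ bound on $\cG$, and no upper bound on $\Lambda(\bM,\theta,\theta,t)$ beyond its growth rate is ever required.
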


\begin{remark}\label{r.4.5}
We remark that the choice of $C_2$ in the previous proposition can be made independent of $\gamma\in I_\delta$ and $\eta$, due to the monotonicity of $\Lambda(\cC,\delta,\vep,t)$ in both scales (see~\eqref{e.monotone}). In particular, one only needs to take $C_2$ given by  $\gamma = 4L_X\delta =\min I_\delta$ and $\eta = \vep$.
\end{remark}

The next crucial step is to obtain a lower bound of $\Lambda(\cG,\gamma,\eta,t)$ of the form $C_3e^{tP(\phi)}$. More importantly, one would like to prove, if possible, that for every orbits collection $\cC\subset\bM\times\RR^+$ with ``large pressure'', $\cC\cap\cG$ must have ``large pressure'' as well. However this cannot be easily done, even with the help of the pressure gap assumption (III). Instead, the authors of~\cite{CT16} introduced $\cG^M$, and proved that $\cC\cap \cG^M$, for $M$ sufficiently large, must have large pressure; furthermore, the choice of $M$ depends on the pressure gap assumption (III). This is the main reason that the specification on $\cG^M$ is needed for every $M>0$.

This discussion is summarized in the next lemma. The version that we cited here is indeed weaker than~\cite[Lemma 4.8]{CT16} (in terms of the scale for the Bowen property and for the pressure gap; see (3) below) but still suffices for our use. 
Note that despite $\cG^M$ being introduced, the specification is only assumed to hold on $\cG$.

\begin{lemma}\cite[Lemma 4.8]{CT16}\label{l.4.8}
Let $(\cP,\cG,\cS)$ be a decomposition for $\cD\subset \bM\times\RR^+$ such that
\begin{enumerate}
\item $\cG$ has (tail) specification at scale $\delta$;
\item the potential $\phi$ has the Bowen property on $\cG$ at scale $\vep$; and
\item $P(\cD^c\cup [\cP]\cup[\cS], \phi,\delta,\vep)<P(\phi)$.
\end{enumerate}
Then for every $\alpha_1,\alpha_2>0$ and $\gamma\in I_\delta$, there exists $M=M(\gamma,\alpha_1,\alpha_2)\in\RR^+$ and $T_1=T_1(\gamma,\alpha_1,\alpha_2)\in\RR^+$ such that the following holds:
\begin{itemize}
\item for any $t\ge T_1$ and $\cC\subset \bM\times \RR^+$ with $\Lambda(\cC,2\gamma,2\gamma,t)\ge\alpha_1e^{tP(\phi)}$, we have
\begin{equation}\label{e.4.8}
\Lambda(\cC\cap\cG^M,2\gamma,2\gamma,t)\ge(1-\alpha_2)\Lambda(\cC,2\gamma,2\gamma,t)\ge c_1e^{tP(\phi)},
\end{equation}
where $c_1 = \alpha_1(1-\alpha_2)$.

\end{itemize}
\end{lemma}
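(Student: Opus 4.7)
My plan is to bound the ``bad'' contribution $\Lambda(\cC \setminus \cG^M, 2\gamma, 2\gamma, t) \le \alpha_2 \Lambda(\cC, 2\gamma, 2\gamma, t)$, from which \eqref{e.4.8} follows since $\Lambda(\cC \cap \cG^M, \ldots) = \Lambda(\cC, \ldots) - \Lambda(\cC \setminus \cG^M, \ldots)$ and by the hypothesis on $\cC$. The decomposition
\begin{equation*}
\cC \setminus \cG^M \;\subset\; \cD^c \,\cup\, \cC_\cP^M \,\cup\, \cC_\cS^M, \qquad \cC_\cP^M = \{(x,t') \in \cD : p(x,t') \ge M\},
\end{equation*}
(and analogously $\cC_\cS^M$) reduces the task to bounding each of the three pieces by $\tfrac13 \alpha_1 \alpha_2 e^{tP(\phi)}$.

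Fix $\eta>0$ with $2\eta < P(\phi) - P^*$, where $P^* := P(\cD^c \cup [\cP] \cup [\cS], \phi, \delta, \vep) < P(\phi)$ by hypothesis (3). Choose $T^*$ so that $\Lambda(\cD^c, \delta, \vep, t)$, $\Lambda([\cP], \delta, \vep, t)$ and $\Lambda([\cS], \delta, \vep, t)$ are all $\le e^{t(P^*+\eta)}$ whenever $t \ge T^*$; meanwhile Proposition \ref{p.4.7} gives $\Lambda(\cG, \delta, \vep, t) \le C_2 e^{tP(\phi)}$. Since $\delta \le 2\gamma \le \vep$, monotonicity \eqref{e.monotone} immediately dispatches the $\cD^c$-piece:
\begin{equation*}
\Lambda(\cD^c, 2\gamma, 2\gamma, t) \le \Lambda(\cD^c, \delta, \vep, t) \le e^{t(P^* + \eta)} \;\ll\; \alpha_1 e^{tP(\phi)}.
\end{equation*}

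The core argument handles $\cC_\cP^M$ (and $\cC_\cS^M$ is symmetric). Take a maximal $(t, 2\gamma)$-separated set $E \subset (\cC_\cP^M)_t$, and bucket $E$ by the integer parts $(n_p, n_g, n_s) = (\lfloor p \rfloor, \lfloor g \rfloor, \lfloor s \rfloor)$ of each $x$'s $\cD$-decomposition values. Each $x$ in a bucket satisfies $(x, n_p) \in [\cP]_{n_p}$ (by \eqref{e.[C]} with $s=0$, $t' = p - n_p$), while $(\varphi_{n_p}x, n_g)$ lies at Bowen distance $O(1)$ from a point of $\cG_{n_g}$ (via Lipschitz continuity of the flow and a fractional-time correction), and $(\varphi_{n_p+n_g}x, n_s) \in [\cS]_{n_s}$. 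Perform a three-level covering of each bucket: first group by a maximal $(n_p, \delta)$-separated set in $[\cP]_{n_p}$, then within each prefix class by an $(n_g, \delta)$-separated set in $\cG_{n_g}$, and finally count ``suffix continuations'' as an $(n_s, 2\gamma)$-separated subset of $[\cS]_{n_s}$. The key observation at each level is that whenever two elements of $E$ collapse to the same representative---say $d_{n_p}(x,y) \le 2\delta$---the inequality $2\gamma > 2\delta$ (guaranteed by $2\gamma \ge 8L_X\delta$) combined with $d_t(x,y) > 2\gamma$ forces $d_{t-n_p}(\varphi_{n_p}x, \varphi_{n_p}y) > 2\gamma$, so continuations remain $2\gamma$-separated. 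Using $\Phi_\vep$-subadditivity and the Bowen property on $\cG$ (which yields an $e^K$ factor when transferring $\Phi_\vep$-values to representatives), we obtain
\begin{equation*}
\sum_{x \in E,\, \text{bucket } (n_p,n_g,n_s)} e^{\Phi_{2\gamma}(x,t)} \le C_0\, e^{n_p(P^* + \eta) + n_g P(\phi) + n_s(P^* + \eta)}.
\end{equation*}
Summing over buckets with $n_p \ge M-1$ and $n_p+n_g+n_s \in \{\lfloor t\rfloor - 2, \ldots, \lfloor t\rfloor\}$ gives
\begin{equation*}
\Lambda(\cC_\cP^M, 2\gamma, 2\gamma, t) \le C_0'\, e^{tP(\phi)} \sum_{n_p \ge M,\, n_s \ge 0} e^{-(n_p+n_s)\eta} \le C_0''\, e^{-M\eta}\, e^{tP(\phi)},
\end{equation*}
which is $\le \tfrac13\alpha_1\alpha_2 e^{tP(\phi)}$ once $M = M(\gamma, \alpha_1, \alpha_2)$ is chosen large enough; $T_1$ is then taken large enough to swallow all the ``large $t$'' caveats collected so far.

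The main obstacle is the three-level separation argument: one must verify that the original $(t, 2\gamma)$-separation of $E$ survives through each successive grouping step without being eroded by the cumulative $O(\delta)$ fuzz introduced at each level. This is precisely where the scale condition $2\gamma \gg \delta$ is used---the fuzz is dominated by the separation scale $2\gamma \ge 8L_X\delta$. Crucially, the middle-layer estimate invokes Proposition \ref{p.4.7}, which only requires tail specification of $\cG$ at the single scale $\delta$, rather than specification of $\cG^M$ for any $M$. This is the structural reason the proof goes through under our weakened hypothesis (I) and ultimately yields Theorem \ref{m.1}.
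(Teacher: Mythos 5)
The paper does not reprove this lemma; it cites \cite[Lemma 4.8]{CT16} directly (Section~\ref{s.3} explicitly refers the reader to \cite{CT16} for the proofs of all results collected there). Your overall strategy, however, is indeed the CT16 strategy: bound $\Lambda(\cC\setminus\cG^M)$ by splitting into the $\cD^c$ piece (dispatched immediately by the pressure gap plus monotonicity) and the long-prefix/long-suffix pieces, and handle the latter by bucketing on integer parts of $p,g,s$ and performing a three-level counting estimate whose $[\cP]$ and $[\cS]$ factors decay like $e^{-M\eta}$. Two small issues first: the line ``$\Lambda(\cC\cap\cG^M)=\Lambda(\cC)-\Lambda(\cC\setminus\cG^M)$'' is false as an equality ($\Lambda$ is a sup over separated sets, not a measure); what holds and is what you actually need is the subadditivity $\Lambda(\cC,\ldots)\le\Lambda(\cC\cap\cG^M,\ldots)+\Lambda(\cC\setminus\cG^M,\ldots)$. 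Also, the bookkeeping $n_p+n_g+n_s$ versus $t$ needs care, since these can differ by up to $3$.

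The genuine gap is in the middle-layer covering. You propose to cover the points $\varphi_{n_p}x$ by a $(n_g,\delta)$-separated net inside $\cG_{n_g}$, appealing to ``$(\varphi_{n_p}x,n_g)$ lies at Bowen distance $O(1)$ from a point of $\cG_{n_g}$.'' But $\varphi_{n_p}x$ is \emph{not} an element of $\cG_{n_g}$: the actual $\cG$-segment in the decomposition of $(x,t)$ is $(\varphi_{p}x,g)$, which starts at a different point and has non-integer length. A maximal $(n_g,\delta)$-separated set in $\cG_{n_g}$ is a $\delta$-net \emph{for $\cG_{n_g}$}, so it gives no control over points at distance ``$O(1)$'' from it; since $O(1)$ is not comparable to $\delta$ or $\gamma$, the injectivity/separation step collapses. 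The correct device --- used by \cite{CT16} and employed by this paper in the directly analogous Lemma~\ref{l.key} --- is to observe that the middle segment $(\varphi_{n_p}x,\,t-n_p-n_s)$ lies in $\cG^1$ (the prefix $p-n_p$ and suffix $s-n_s$ are both $<1$), take a $\gamma$-separated set inside $(\varphi_{n_p,n_s}(\cC)\cap\cG^1)_{t-n_p-n_s}$, and then invoke the upper bound $\Lambda(\cG^1,\gamma,3\gamma,\cdot)\le C_2''e^{\cdot\,P(\phi)}$ (Lemma~\ref{l.G1}, whose proof only uses specification of $\cG$ at scale $\delta$ and the Bowen property). With that substitution your sketch closes; as written, the middle estimate is not justified.
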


\begin{remark}\label{r.4.8}
Throughout this article we always take 
$\alpha_2 = \frac12$. Consequently we will write 
\begin{equation}\label{e.MT}
M(\gamma, \alpha_1)=M(\gamma, \alpha_1,\frac12) \mbox{ and } T_1 (\gamma, \alpha_1)= T_1(\gamma, \alpha_1,\frac12)
\end{equation} 
to highlight their dependence on $\gamma$ and $\alpha_1$. In this and the next section, we will, for the most part, take $\alpha_1=1$. However, other values of $\alpha_1$ will become relevant in Section~\ref{s.6} (recall the discussion at the end of Section~\ref{s.2'}).
\end{remark}


The next two lemmas are proven in~\cite{CT16}. For the convenience of our readers, we also weaken the scale of the Bowen property in order to be consistent with Section~\ref{s.2'}. 
\begin{lemma}\label{l.4.9}\cite[Lemma 4.9]{CT16}
Assume that $\cG$ has tail specification at scale $\delta$, $\varphi_t$ is almost expansive at scale $\vep$, and $\phi$ has the Bowen property  on $\cG$ at scale $\vep$. Then for every $\gamma \in I_\delta$ and $\alpha_2>0$, the constants $M\in\NN$ and $T_1\in\RR^+$ given by Lemma~\ref{l.4.8} with $\alpha_1=1$ satisfies that for every $t\ge T_1$,
$$
\Lambda(\cG^M, 2\gamma,2\gamma,t)\ge (1-\alpha_2)\Lambda(\bM,2\gamma,2\gamma,t)\ge (1-\alpha_2)e^{tP(\phi)}.
$$
\end{lemma}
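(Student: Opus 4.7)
The statement splits into two inequalities, and the plan is to derive each one by pairing an earlier lemma from the excerpt with a suitable choice of parameters. The rightmost inequality $\Lambda(\bM,2\gamma,2\gamma,t)\ge e^{tP(\phi)}$ is really just Lemma~\ref{l.4.2} with the scale replaced by $2\gamma$. For $\gamma\in I_\delta=[4L_X\delta,100L_X\delta]$ the doubled scale $2\gamma$ lies in $[8L_X\delta,200L_X\delta]$, which may poke outside of $I_\delta$ at its upper end, but by the monotonicity of $\Lambda$ in the second argument we have
$$
\Lambda(\bM,2\gamma,2\gamma,t)\ \ge\ \Lambda(\bM,2\gamma,\gamma',t)\qquad\text{for any }\gamma'\le 2\gamma,
$$
so it suffices to apply Lemma~\ref{l.4.2} at a scale $\gamma'\in I_\delta\cap [\tfrac12\cdot 2\gamma,2\gamma]$. (In the two choices of $\gamma$ that will actually be used, namely $\gamma=22L_X\delta$ and $\gamma=5L_X\delta$, the doubled scale itself already lies in $I_\delta$, so the point is moot.)

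For the leftmost inequality we apply Lemma~\ref{l.4.8} with $\cC=\bM\times\RR^+$, $\alpha_1=1$ and the given value of $\alpha_2$. The hypothesis $\Lambda(\cC,2\gamma,2\gamma,t)\ge \alpha_1 e^{tP(\phi)}$ of that lemma is precisely what we just established. The lemma then produces the constants $M=M(\gamma,1,\alpha_2)$ and $T_1=T_1(\gamma,1,\alpha_2)$ and guarantees that for $t\ge T_1$
$$
\Lambda\bigl((\bM\times\RR^+)\cap\cG^M,\,2\gamma,\,2\gamma,\,t\bigr)\ \ge\ (1-\alpha_2)\,\Lambda(\bM,2\gamma,2\gamma,t).
$$
Since $\cG^M\subset\bM\times\RR^+$, the left-hand side equals $\Lambda(\cG^M,2\gamma,2\gamma,t)$, and combining with the lower bound of the previous paragraph finishes the proof.

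\textbf{Where the real work sits.} There is essentially no new content in this lemma beyond what is already packaged into Lemmas~\ref{l.4.2} and~\ref{l.4.8}; the statement is really a convenient specialization of Lemma~\ref{l.4.8} to the full space $\cC=\bM\times\RR^+$. The only thing requiring any care is the bookkeeping around the interval $I_\delta$ and the factor of $2$, so that one can legitimately assert $\Lambda(\bM,2\gamma,2\gamma,t)\ge e^{tP(\phi)}$ uniformly in $\gamma\in I_\delta$. The genuine analytic work --- the Bowen property, the tail specification on $\cG$, the pressure gap, and the almost expansivity --- has already been absorbed into the hypotheses of the two lemmas being invoked, so no further estimate on orbit counts or distortion constants is needed here.
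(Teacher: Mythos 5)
Your overall strategy is exactly the paper's: the statement is designed to be a specialization of Lemma~\ref{l.4.8} to $\cC=\bM\times\RR^+$ with $\alpha_1=1$, and the second paragraph of your proposal handles the leftmost inequality correctly. The paper's remark following Lemma~\ref{l.4.10} confirms this is the intended route.

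However, your treatment of the rightmost inequality $\Lambda(\bM,2\gamma,2\gamma,t)\ge e^{tP(\phi)}$ has a genuine gap. You lower-bound $\Lambda(\bM,2\gamma,2\gamma,t)$ by $\Lambda(\bM,2\gamma,\gamma',t)$ using monotonicity in the second scale, which is fine, and then say it suffices to invoke Lemma~\ref{l.4.2} at a scale $\gamma'\in I_\delta\cap[\gamma,2\gamma]$. But Lemma~\ref{l.4.2} bounds $\Lambda(\bM,\gamma',\gamma',t)$, not $\Lambda(\bM,2\gamma,\gamma',t)$. By~\eqref{e.monotone}, $\Lambda$ is \emph{decreasing} in the first (separation) scale, so when $\gamma'<2\gamma$ we get $\Lambda(\bM,2\gamma,\gamma',t)\le\Lambda(\bM,\gamma',\gamma',t)$ --- the inequality points the wrong way, and there is no $\gamma'\in I_\delta$ with $\gamma'\ge 2\gamma$ once $\gamma>50L_X\delta$. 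So the reduction only works when $2\gamma\in I_\delta$.

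The correct argument is not a monotonicity reduction to Lemma~\ref{l.4.2} but a rerun of its proof at the doubled scale: by Lemma~\ref{l.4.1} with $\gamma$ replaced by $2\gamma$ one has $\Lambda(\bM,4\gamma,kt)\le\Lambda(\bM,2\gamma,2\gamma,t)^k$, hence $\tfrac1t\log\Lambda(\bM,2\gamma,2\gamma,t)\ge P(\phi,4\gamma)$, and then Proposition~\ref{p.stablize2} gives $P(\phi,4\gamma)=P(\phi)$ because $4\gamma\le 400L_X\delta<\vep/2=500L_X\delta$ for every $\gamma\in I_\delta$. You essentially noticed this issue yourself by remarking that ``the point is moot'' for the two values of $\gamma$ actually used; but since the lemma is stated for all $\gamma\in I_\delta$, the argument as written does not close for $\gamma\in(50L_X\delta,100L_X\delta]$, and the fix above is needed.
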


\begin{lemma}\label{l.4.10}\cite[Proposition 4.10]{CT16}
Let $\vep,\delta,\gamma$ be as in Lemma~\ref{l.4.9}, and $M,T_1$ be given by Lemma~\ref{l.4.8} with $\alpha_1=1$ and $\alpha_2=\frac12$. Then there exists $L_1 = L_1(\gamma)\in\RR^+$ such that for $t\ge T_1$, 
$$
\Lambda(\cG^M,2\gamma,t)\ge e^{-L_1}e^{tP(\phi)}.
$$
As a consequence, $\Lambda(\bM,2\gamma,t)\ge e^{-L_1}e^{tP(\phi)} $ for $t\ge T_1$.
\end{lemma}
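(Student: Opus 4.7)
The plan is to deduce this lower bound on the single-scale partition function $\Lambda(\cG^M,2\gamma,t) = \Lambda(\cG^M,2\gamma,0,t)$ directly from the two-scale lower bound already proved in Lemma~\ref{l.4.9}, by paying the price of a bounded distortion constant that comes from the Bowen property on $\cG^M$.

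More precisely, I would proceed as follows. First, I invoke Lemma~\ref{l.4.9} with $\alpha_1=1,\alpha_2=\tfrac12$ to get, for all $t\ge T_1$,
$$
\Lambda(\cG^M,2\gamma,2\gamma,t)\ge \tfrac{1}{2}\,e^{tP(\phi)}.
$$
Next, since the Bowen property holds on $\cG$ at scale $\vep$ with some distortion constant $K$, the remark in Section~\ref{s.2.4} gives the Bowen property on $\cG^M$ at scale $\vep$ with distortion constant $K(M):=K+2M\Var(\phi,\vep)$. In our regime $2\gamma\le 200L_X\delta<\vep=1000L_X\delta$, so $B_t(x,2\gamma)\subset B_t(x,\vep)$, and for every $(x,t)\in\cG^M$,
$$
\Phi_{2\gamma}(x,t)\le \Phi_\vep(x,t)\le \Phi_0(x,t)+K(M).
$$

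Now for any $(t,2\gamma)$-separated set $E\subset\cG^M_t$, summing the inequality above in exponentiated form yields
$$
\sum_{x\in E} e^{\Phi_{2\gamma}(x,t)}\le e^{K(M)}\sum_{x\in E}e^{\Phi_0(x,t)}.
$$
Taking the supremum over all such $E$ gives $\Lambda(\cG^M,2\gamma,2\gamma,t)\le e^{K(M)}\Lambda(\cG^M,2\gamma,t)$. Combined with the two-scale lower bound, this produces
$$
\Lambda(\cG^M,2\gamma,t)\ge \tfrac{1}{2}e^{-K(M)}e^{tP(\phi)},
$$
so we may set $L_1=L_1(\gamma):=K(M)+\log 2$, noting that $M$ depends on $\gamma$ via Lemma~\ref{l.4.8} so $L_1$ does as well. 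The consequence $\Lambda(\bM,2\gamma,t)\ge e^{-L_1}e^{tP(\phi)}$ follows immediately: any $(t,2\gamma)$-separated set inside $\cG^M_t\subset\bM$ is also a $(t,2\gamma)$-separated set of $\bM$, and the Birkhoff integral $\Phi_0$ depends only on the point and the length $t$, so $\Lambda(\bM,2\gamma,t)\ge\Lambda(\cG^M,2\gamma,t)$.

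I do not expect any serious obstacle here; the only point requiring a little care is the verification that the Bowen property at scale $\vep$ really does control $\Phi_{2\gamma}$ rather than just $\Phi_0$-comparisons of pairs, but this is a one-line consequence of the monotonicity of the Bowen balls in $\vep$ together with the definition of $\Phi_\vep$. The entire argument is just the passage between the two-scale and single-scale partition functions restricted to the good core $\cG^M$, paid for by a distortion factor that is constant in $t$ once $M$ (and hence $\gamma$) is fixed.
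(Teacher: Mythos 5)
Your proof is correct and follows exactly the route the paper indicates in the remark after the lemma: apply Lemma~\ref{l.4.9} (i.e.\ Lemma~\ref{l.4.8} with $\cC=\bM\times\RR^+$, $\alpha_1=1$, $\alpha_2=\tfrac12$) to get the two-scale lower bound on $\cG^M$, then trade the second scale for the distortion factor $e^{-K(M)}$ coming from the Bowen property on $\cG^M$, using $2\gamma\le 200L_X\delta<\vep$. The details you supply — monotonicity of $\Phi_{(\cdot)}(x,t)$ in the scale, termwise bound over any $(t,2\gamma)$-separated $E\subset\cG^M_t$, and the inclusion $\cG^M_t\subset\bM$ for the final consequence — are all the paper leaves implicit, and they are filled in correctly.
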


We remark that both lemmas are direct consequences of Lemma~\ref{l.4.8}, where $M,T_1$  are obtained by taking $\alpha_1=1$ and $\cC = \bM\times\RR^+$. To obtain Lemma~\ref{l.4.10} one takes $\alpha_2 = \frac12$ and uses the Bowen property on $\cG^M$ at scale $2\gamma<\vep$ with distortion constant  $K(M) = K+2M\Var(\phi,\vep)$. As a result, $L_1$ depends implicitly on $M$ and consequently on $\gamma\in I_\delta$.




\section{Removing $M$ - lower bound on $\cG^1$}
In this section we will show the transition from $\cG^M$ to $\cG^1$. The constant $M$ in this Section should be considered as a function of $\gamma\in I_\delta$ and $\alpha_1>0$ (we only take $\alpha_2=\frac12$). More specific choices of $M$ will be made in the next two sections. Also recall that $\cG^1\subset \cD$ is the collection of orbit segments such that $p<1$ and $s<1$. 

We first state the following result which improves Proposition~\ref{p.4.7}. Recall the definition of $I_\delta$ from~\eqref{e.I}.

\begin{lemma}\label{l.G1}
Suppose that $\cG$ has tail specification at scale $\delta>0$, and $\phi$ has the Bowen property on $\cG$ at scale $\vep$. Then there exists a constant $C''_2>0$ such that for every $\gamma\in I_\delta$ and $\eta\in[0,\vep]$, it holds
$$
\Lambda(\cG^1, \gamma,\eta,t)\le C''_2 e^{tP(\phi)}
$$
for any $t\ge 0$.
\end{lemma}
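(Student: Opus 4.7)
The plan is to repeat the chain of estimates leading to Proposition~\ref{p.4.7}, but with the collection $\cG^1$ in place of $\cG$. The key observation is that $\cG^1$ inherits from $\cG$ both the Bowen property at scale $\vep$, with enlarged distortion constant $K(1)=K+2\Var(\phi,\vep)$ (as already noted at the end of Section~\ref{s.2.4}), and tail (W)-specification, now at the enlarged scale $L_X\delta$ rather than $\delta$. Once these two properties are secured for $\cG^1$, essentially no new ideas are needed; the case of small $t$ is then trivial since $\cG^1_t$ and $\Phi_\eta$ are both bounded on bounded $t$.

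The specification transfer is the technical heart of the argument. Given $\{(x_j,t_j)\}_{j=1}^k\subset\cG^1$ with decompositions $t_j=p_j+g_j+s_j$ so that $(\tilde x_j,g_j):=(\varphi_{p_j}(x_j),g_j)\in\cG$, I would first apply $\cG$-specification at scale $\delta$ to the cores $\{(\tilde x_j,g_j)\}$, obtaining a shadowing orbit $z$ with gluing times $\tau^{\cG}_j\le\tau(\delta)$, and then set $y:=\varphi_{-p_1}(z)$ with new gluing times $\tau_j:=\tau^{\cG}_j-s_j-p_{j+1}$ for the $\cG^1$-segments; after first inflating $\tau(\delta)$ by $2$, these $\tau_j$ are non-negative. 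On each $\cG$-core the shadowing error of $y$ is at most $\delta$, while on each boundary piece of length $<1$ one applies $\varphi_{\pm u}$ with $|u|\le 1$, whose Lipschitz constant is bounded by $L_X$; this yields a total shadowing error $\le L_X\delta$. Hence $\cG^1$ has tail (W)-specification at scale $L_X\delta$, with maximal gap $\tau(\delta)+2$.

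With both ingredients in place, I would replay Lemma~\ref{l.4.3}, Lemma~\ref{l.4.6} and Proposition~\ref{p.4.7} for $\cG^1$, with $\delta$ replaced by $L_X\delta$ and $K$ replaced by $K(1)$. The only parameter check needed is that the analogue of the constraint $\theta<\gamma/2-\delta$, namely $\theta<\gamma/2-L_X\delta$, admits a positive solution for every $\gamma\in I_\delta$: since $\gamma\ge 4L_X\delta$ one has $\gamma/2-L_X\delta\ge L_X\delta>0$. The generous ratio $\vep/(L_X\delta)=1000$ fixed in Section~\ref{s.2'} is precisely what makes this chain of estimates still go through. The output is the desired bound $\Lambda(\cG^1,\gamma,\eta,t)\le C''_2\,e^{tP(\phi)}$, uniformly in $\gamma\in I_\delta$ and $\eta\in[0,\vep]$, with $C''_2$ depending on $K(1)$ and on the enlarged maximal gap $\tau(\delta)+2$.

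The hardest step will be the specification transfer above: one has to juggle non-negative gluing times for the $\cG^1$-segments while controlling the Lipschitz blow-up on the two short boundary pieces. Inflating the original gap by $2$ handles the non-negativity, and the standing hypothesis $L_{\varphi_u}\le L_X$ for $|u|\le 1$ absorbs the boundary Lipschitz factor. Verifying the Bowen property on $\cG^1$ is just the observation already recorded in Section~\ref{s.2.4}, and re-running Proposition~\ref{p.4.7} is then routine bookkeeping.
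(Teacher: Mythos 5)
Your route is the same as the paper's: give $\cG^1$ the Bowen property at scale $\vep$ with distortion constant $K(1)=K+2\Var(\phi,\vep)$, give $\cG^1$ tail specification at the enlarged scale $L_X\delta$ by reusing the shadowing orbits for the $\cG$-cores and absorbing the two boundary pieces of length $<1$ into the Lipschitz constant, and then rerun Lemma~\ref{l.4.3}, Lemma~\ref{l.4.6} and Proposition~\ref{p.4.7} with $\delta$ replaced by $L_X\delta$ and $K$ by $K(1)$, the uniformity in $\gamma\in I_\delta$ and $\eta\in[0,\vep]$ coming from monotonicity as in Remark~\ref{r.4.5}. That is exactly what the paper does, and your parameter check $\gamma/2-L_X\delta\ge L_X\delta>0$ for $\gamma\in I_\delta$ is the right one.

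There is, however, a flawed step in the part you yourself call the technical heart. You set $\tau_j=\tau^{\cG}_j-s_j-p_{j+1}$ and claim that ``inflating $\tau(\delta)$ by $2$'' makes these non-negative. It does not: enlarging the maximum gap size only relaxes the upper bound, while the weak specification property hands you gluing times $\tau^{\cG}_j$ that you cannot bound from below (they may well be $0$), so you cannot force $\tau^{\cG}_j\ge s_j+p_{j+1}$, and your $\tau_j$ may be negative, as low as $-2$. The honest resolution is either (a) to note that the definition of (W)-specification in Section~\ref{s.2.5} only imposes $\tau_i\le\tau$, with no lower bound, so gluing times in $(-2,\tau]$ are already admissible and the construction goes through with the same shadowing orbits (this is what the paper's one-line transfer amounts to); or (b), if you want genuinely non-negative transition times so the counting arguments behind Lemma~\ref{l.4.3} can be quoted verbatim, to keep the gaps in the bounded window $(-2,\tau]$ and verify that those multiplicity estimates only use the boundedness of this window, at the cost of changed constants. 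Either repair is routine, but the mechanism you invoked does not achieve what you claim. (A minor shared point, not a gap relative to the paper: shadowing the prefix uses the Lipschitz constant of $\varphi_{-u}$ for $u\in[0,1]$, which strictly exceeds the literal definition $L_X=\max_{t\in[0,1]}L_{\varphi_t}$; the paper makes the same implicit use of $L_X$.)
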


\begin{proof}
If 
$\cG$ has tail specification at scale $\delta$, then $\cG^1$ has tail specification at scale $L_X\delta$ with the same shadowing orbits. Also note that if $\phi$ has the Bowen property on $\cG$ at scale $\vep$, then it also has the Bowen property on $\cG^1$ at the same scale, but with a larger distortion constant $K(1) = K+2\Var(\phi,\vep)$. 

Now the result follows from the proof of  Proposition~\ref{p.4.7} (and the previous lemmas that lead to it) with $\cG$ replaced by $\cG^1$. The uniformity of $C_2''$ in $\gamma\in I_\delta$ and $\eta\in[0,\vep]$ is due to Remark~\ref{r.4.5}.
\end{proof}

Given $\cC\subset\bM\times\RR^+$ and $i,j\in\NN$, we write
\begin{equation}\label{e.fC}
\varphi_{i,j}(\cC) = \{(\varphi_i(x),t-(i+j)):(x,t)\in\cC\}.
\end{equation}
This is the orbit collection formed by removing the starting and ending segments from the orbit segments in $\cC$.

For a positive real number $r>0$ we denote by $[r]$ its integer part. The next key lemma allows us to obtain a lower bound of the partition function on  $\cG^1$ at the price of a time change $t\to T(t)$ which differs from $t$ by at most $2M$. Note that the change of scale from $(2\gamma,2\gamma)$ to $(\gamma,3\gamma)$ is the main reason why we need  uniform estimates of certain constants for $\gamma\in I_\delta$.

\begin{lemma}\label{l.key}
Let $(\cP,\cG,\cS)$ be a decomposition for $\cD\subset \bM\times\RR^+$ such that
\begin{enumerate}
\item $\cG$ has (tail) specification at scale $\delta$;
\item the potential $\phi$ has the Bowen property on $\cG$ at scale $\vep$; and
\item $P(\cD^c\cup [\cP]\cup[\cS], \phi,\delta,\vep)<P(\phi)$.
\end{enumerate}
For every $\alpha_1>0$ and $\gamma\in I_\delta$, let $M = M(\gamma,\alpha_1), T_1 = T_1(\gamma,\alpha_1) $ be the constants given by~\eqref{e.MT}, i.e.,  from Lemma~\ref{l.4.8} with $\alpha_2=\frac12$. Then there exists $L = L(\gamma,\alpha_1)$, such that the following statement holds:
\begin{itemize}
\item 
 for every $t\ge T_1$ and $\cC\subset\bM\times\RR^+$ 
with $\Lambda(\cC,2\gamma,2\gamma,t)\ge\alpha_1e^{tP(\phi)}$, there exists $t_p,t_s\in[0,[M]]\cap \NN$, such that, with $T(t) = t-(t_p+t_s)$,
\begin{equation}\label{e.4.2}
\Lambda\left(\varphi_{t_p,t_s}(\cC)\cap \cG^1,{\gamma,3\gamma},T(t)\right)\ge L \cdot \Lambda(\cC, 2\gamma,2\gamma,t),
\end{equation}
where $\varphi_{t_p,t_s}(\cC)$ is defined by~\eqref{e.fC}.
\end{itemize}
\end{lemma}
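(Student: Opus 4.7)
The plan is to take a near-maximizing $(t,2\gamma)$-separated witness for $\Lambda(\cC,2\gamma,2\gamma,t)$, use Lemma~\ref{l.4.8} with $\alpha_2=\tfrac12$ to keep at least half of that mass inside $\cG^M$ via a $(t,2\gamma)$-separated set $E\subset(\cC\cap\cG^M)_t$ satisfying $\sum_{x\in E}e^{\Phi_{2\gamma}(x,t)}\ge\tfrac14\Lambda(\cC,2\gamma,2\gamma,t)$, and then cut off a prefix and a suffix of integer length from each $x\in E$ so that the truncated orbit lands in $\cG^1$ with induced decomposition $(p(x)-t_p,g(x),s(x)-t_s)$. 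Since $p(x),s(x)$ only satisfy $p(x),s(x)<M$ and are otherwise arbitrary, the truncation integers cannot be fixed in advance, so the first real step is a pigeonhole over the $([M]+1)^2$ possible pairs $([p(x)],[s(x)])\in\{0,\dots,[M]\}^2$, extracting a common pair $(t_p,t_s)$ and a subset $E_{n^*,m^*}\subset E$ on which a $([M]+1)^{-2}$ fraction of the sum above concentrates.

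With $(t_p,t_s)$ and $T(t)=t-t_p-t_s$ fixed, the image $\varphi_{t_p}(E_{n^*,m^*})$ lies inside $(\varphi_{t_p,t_s}(\cC)\cap\cG^1)_{T(t)}$ and is the natural candidate witness for the left-hand side. The obstruction is that it may fail to be $(T(t),\gamma)$-separated: two orbits in $E_{n^*,m^*}$ could have been $(t,2\gamma)$-separated only through the truncated prefix or suffix. To fix this I will extract a $(T(t),\gamma)$-separated subset $F\subset\varphi_{t_p}(E_{n^*,m^*})$ greedily, obtaining a projection $\pi\colon E_{n^*,m^*}\to F$ that sends each $x$ to some $z\in F$ with $d_{T(t)}(\varphi_{t_p}(x),z)\le\gamma$.

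The main obstacle is bounding the fibre sizes $|\pi^{-1}(z)|$ uniformly. For distinct $x_i,x_j\in\pi^{-1}(z)$ the triangle inequality gives $d(\varphi_r x_i,\varphi_r x_j)\le 2\gamma$ for every $r\in[t_p,t-t_s]$, so the strict $(t,2\gamma)$-separation of $E$ must actually be realised in the initial interval $[0,t_p]$ or the final interval $[t-t_s,t]$, each of length at most $[M]$. Fixing, once and for all, $(t',\gamma/2)$-Bowen covers of $\bM$ for each $t'\in\{0,1,\dots,[M]\}$ (finite by compactness), the map sending $x$ to the pair consisting of the cell of $x$ in the $d_{t_p}$-cover and the cell of $\varphi_{t-t_s}(x)$ in the $d_{t_s}$-cover is injective on $\pi^{-1}(z)$: a collision on both coordinates forces $d_{t_p}(x_i,x_j)\le\gamma$ and $d(\varphi_r x_i,\varphi_r x_j)\le\gamma$ for $r\in[t-t_s,t]$, which combined with the middle bound $\le 2\gamma$ yields $d_t(x_i,x_j)\le 2\gamma$, contradicting the strict $(t,2\gamma)$-separation of $E$. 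This gives a uniform bound $|\pi^{-1}(z)|\le N_0$ depending only on $M=M(\gamma,\alpha_1)$, $\gamma$, and $\bM$.

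The final step is the potential comparison, which is routine: for $y\in B_t(x,2\gamma)$, setting $y'=\varphi_{t_p}(y)$ yields $y'\in B_{T(t)}(\varphi_{t_p}(x),2\gamma)$, and splitting the Birkhoff integral over $[0,t]$ into the initial, middle, and final pieces shows that the outer two contribute at most $2M\|\phi\|_\infty$ in total. Hence $\Phi_{2\gamma}(x,t)\le\Phi_{2\gamma}(\varphi_{t_p}(x),T(t))+2M\|\phi\|_\infty$, and a further triangle inequality using $\pi(x)=z$ upgrades this to $\Phi_{2\gamma}(x,t)\le\Phi_{3\gamma}(z,T(t))+2M\|\phi\|_\infty$. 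Exponentiating, summing over each fibre and then over $F$ gives
\[
\Lambda\!\left(\varphi_{t_p,t_s}(\cC)\cap\cG^1,\gamma,3\gamma,T(t)\right)\ge\frac{1}{4\,N_0\,([M]+1)^{2}\,e^{2M\|\phi\|_\infty}}\,\Lambda(\cC,2\gamma,2\gamma,t),
\]
which is the desired estimate with $L(\gamma,\alpha_1)$ equal to this explicit constant. All the quantities $M$, $N_0$, and $\|\phi\|_\infty$ depend only on $\gamma\in I_\delta$ and $\alpha_1$ (not on $t$), consistent with the uniform-estimates remark preceding the lemma.
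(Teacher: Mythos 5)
Your argument is correct and shares the paper's skeleton: apply Lemma~\ref{l.4.8} with $\alpha_2=\tfrac12$ to pass to $\cC\cap\cG^M$, classify the points of a near-maximizing $(t,2\gamma)$-separated set by the integer parts $([p],[s])\in\{0,\dots,[M]\}^2$, pigeonhole to fix $(t_p,t_s)$, and convert the weight of that class into a lower bound for $\Lambda(\varphi_{t_p,t_s}(\cC)\cap\cG^1,\gamma,3\gamma,T(t))$ with a loss depending only on $M(\gamma,\alpha_1)$ and $\gamma$. The one step you handle differently is the control of the clipped ends: the paper injects each class $E_t(i,k)$ into a product $E^P_i\times E^G_{t-(i+k)}\times E^S_k$ of separated sets and absorbs the prefix/suffix into the partition functions $\Lambda([\cP],\gamma,3\gamma,i)$ and $\Lambda([\cS],\gamma,3\gamma,k)$, bounded by a constant $c_2$ uniform over $i,k\le[M]$, whereas you project onto a greedily extracted $(T(t),\gamma)$-separated subset of the middle segments, bound the fibre multiplicity by Bowen-ball covering numbers of $\bM$ at the finitely many integer times $t'\le[M]$ and scale $\gamma/2$, and pay $e^{2M\|\phi\|_\infty}$ for the truncated Birkhoff integrals; this is more elementary and yields a slightly cruder but equally uniform constant $L(\gamma,\alpha_1)$. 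You also avoid the paper's $\beta_1\to0$ subsequence argument (used there to fix $(t_p,t_s)$ in the limit) by simply taking a separated set carrying at least half of $\Lambda(\cC\cap\cG^M,2\gamma,2\gamma,t)$, which is legitimate since that quantity is finite and positive, at the cost of the harmless extra factor $\tfrac12$. Finally, both proofs rely on the same implicit fact that the clipped segment $(\varphi_{t_p}x,T(t))$ lies in $\cG^1$ with the induced decomposition $(p-t_p,\,g,\,s-t_s)$, so no gap arises there relative to the paper.
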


\begin{proof}

For every $\gamma\in I_\delta$ and $\alpha_1>0$, we can take $\alpha_2 = \frac12$ in Lemma~\ref{l.4.8} to obtain constants $M = M(\gamma,\alpha_1), T_1 = T_1(\gamma,\alpha_1) $ such that for every $\cC\subset\bM\times\RR^+$ with  $\Lambda(\cC,2\gamma,2\gamma,t)\ge\alpha_1e^{tP(\phi)}$, we have
\begin{equation}\label{e.4.2.0}
\Lambda(\cC\cap\cG^M,2\gamma,2\gamma,t)\ge\frac12\Lambda(\cC,2\gamma,2\gamma,t)
\end{equation}
whenever $t>T_1$.

For any $\beta_1>0$ we take $E_t\subset \cC\cap \cG^M$  a $(t,2\gamma)$-separated set with
\begin{equation}\label{e.4.2.3}
\sum_{x\in E_t}e^{\Phi_{2\gamma}(x,t)}>\Lambda(\cC\cap\cG^M,2\gamma,2\gamma,t)-\beta_1.
\end{equation}
Given $x\in E_t$ with $[p(x,t)] = i$ and $[s(x,t)]=k$, we have 
$$
x\in [\cP]_i,\hspace{0.2in} \varphi_i(x)\in\cG^1_{t-(i+k)},\hspace{0.2in} \varphi_{t-k}(x)\in[\cS]_k,
$$
where $[\cP]$ and $\cS$ are defined as in~\eqref{e.[C]}. Note that $\max\{i,k\}<M$ since $E_t\subset \cG^M_t$. Given $i,k = 1,\ldots,[M]$, we define
$$
E_t(i,k) = \{x\in E_t: [p(x,t)] = i, [s(x,t)] = k\}.
$$

Now for each $i=1,\ldots,[M]$ we take $E^P_i\subset [\cP]_i $ any $(i,\gamma)$-separated set; choose $E^S_k\subset [\cS]_k$ similarly. We also take $E^G_s\subset \left(\varphi_{i,k}(\cC)\cap \cG^1\right)_s$, $s = t-(i+k)$, to be any $(s,\gamma)$-separated set.\footnote{Note that by Lemma~\ref{p.4.7}, $ \left(\varphi_{i,k}(\cC)\cap \cG^1\right)_s\ne\emptyset$ for some $i,k\in [0,[M]]$. For those $i,k$ for which the set $ \left(\varphi_{i,k}(\cC)\cap \cG^1\right)_s$ is indeed empty, the corresponding $E_{t-(i+k)}^G$ is taken to be empty, and the summation $\sum_{y\in E_{t-(i+k)}^G} e^{\Phi_{3\gamma}(y,t-(i+k))}$ on the next page is set to be zero.}

Then we define an injection
$$
\pi:E_t(i,k)\to E^P_i\times E^G_{t-(i+k)} \times E^S_k, \,\,\pi(x) = (x_1,x_2,x_3)
$$
in the following way:
\begin{itemize}
\item $x_1\in E^P_i$ is such that $x\in \overline{B}_i(x_1,\gamma)$;
\item  $x_2\in E^G_{t-(i+k)}$ is such that $\varphi_i(x)\in\overline{B}_{t-(i+k)}(x_2,\gamma)$;
\item  $x_3\in E^S_k$ is such that $\varphi_{t-k}(x)\in \overline{B}_k(x_3,\gamma)$.
\end{itemize}
The injectivity is obvious: if $\pi(x)=\pi(y) = (x_1,x_2,x_3)$, then we have $$d_i(x,y) \le d_i(x,x_1)+d_i(x_1,y) \le 2\gamma;$$ the same holds for $d_{t-(i+k)}(\varphi_i(x), \varphi_i(y))$ and  $d_{k}(\varphi_{t-k}(x), \varphi_{t-k}(y))$. This shows that $d_t(x,y)\le 2\gamma$. Since $E_t(i.k)$ is $(t,2\gamma)$-separated, we must have $x=y$.

Writing $\pi_1,\pi_2$ and $\pi_3$ for the components of $\pi$, we note that for $x\in E_t(i,k)$ it holds
$$
\Phi_{2\gamma}(x,t)\le \Phi_{3\gamma}(\pi_1 x, i)+\Phi_{3\gamma}(\pi_2x,t-(i+k)) + \Phi_{3\gamma}(\pi_3x, k).
$$
Then the left-hand side of~\eqref{e.4.2.3} can be estimated as
\begin{align*}
&\sum_{x\in E_t}e^{\Phi_{2\gamma}(x,t)}\\
 = &\sum_{i,k\in[0,[M]]\cap\NN}\sum_{x\in E_t(i,k)} e^{\Phi_{2\gamma}(x,t)}\\ 
\le &\sum_{i,k\in[0,[M]]\cap\NN}\sum_{x\in E_t(i,k)}e^{\Phi_{3\gamma}(\pi_1 x, i)}\cdot e^{\Phi_{3\gamma}(\pi_2x,t-(i+k))} \cdot e^{\Phi_{3\gamma}(\pi_3x, k)}\\
\le &\sum_{i,k\in[0,[M]]\cap\NN} \sum_{y_1\in E_i^P}e^{\Phi_{3\gamma}(y_1, i)}\cdot \sum_{y_2\in E_{t-(i+k)}^G}e^{\Phi_{3\gamma}(y_2, t-(i+k))}\cdot \sum_{y_3\in E_k^S}e^{\Phi_{3\gamma}(y_3, k)}\\
\le &\sum_{i,k\in[0,[M]]\cap\NN}\Lambda([\cP],\gamma,3\gamma,i)\cdot \Lambda(\varphi_{i,k}(\cC)\cap \cG^1,\gamma,3\gamma,t-(i+k))\cdot\Lambda([\cS],\gamma,3\gamma,k).     \\
\end{align*}
Here we slightly abuse  notation in the case of either $i=0$ or $k=0$ (or both) by setting  the corresponding term $e^{\Phi_{3\gamma}(\pi_1 x, i)}$ or $e^{\Phi_{3\gamma}(\pi_3 x, k)}$ to be one.

Since $M$ is independent of $\cC$, we can take $c_2>1$ depending on $M$ but not on $\cC$ (and hence not on the choice of $E_t$), such that 
$$
\max_{i,j\in[0,[M]]\cap \NN}\Big\{\Lambda([\cP],\gamma,3\gamma,i),\,\, \Lambda([\cS],\gamma,3\gamma,k)\Big\}<c_2.
$$
This gives
\begin{align*}
&\sum_{x\in E_t}e^{\Phi_{2\gamma}(x,t)}\le (c_2)^2\cdot \sum_{i,k\in[0,[M]]\cap\NN} \Lambda\left(\varphi_{i,k}(\cC)\cap \cG^1,\gamma,3\gamma,t-(i+k)\right).
\end{align*} 
As a result, there exist $i,k\in[0,[M]]\cap \NN$ such that 
\begin{align*}
 &\Lambda\left(\varphi_{i,k}(\cC)\cap \cG^1,\gamma,3\gamma,t-(i+k)\right)\\
 \ge &(c_2([M]+1))^{-2}\cdot \sum_{x\in E_t}e^{\Phi_{2\gamma}(x,t)}\\
\numberthis \label{e.4.2.4} \ge &(c_2([M]+1))^{-2}(\Lambda(\cC\cap\cG^M,2\gamma,2\gamma,t)-\beta_1)
\end{align*} 
where the last inequality follows from the choice of $E_t$ by~\eqref{e.4.2.3}.

To conclude, we take a sequence $(\beta^n_1)_{n=1}^\infty$ with $\beta_1^n\to 0$ as $n\to\infty$. Since there are only finitely many choices of $(i,k)$, there exists a subsequence $(n_j)_j$ along which the choices of $(i,k)$ are the same. Fix one such choices and take $t_p = i$, $t_s = k$ and set $T(t)=t-(t_p+t_s)$. \eqref{e.4.2.0} and~\eqref{e.4.2.4} then gives
\begin{align*}
\Lambda\left(\varphi_{t_p,t_s}(\cC)\cap \cG^1,\gamma,3\gamma,T(t)\right)\ge &(c_2([M]+1))^{-2}\cdot \Lambda(\cC\cap\cG^M,2\gamma,2\gamma,t)\\
\ge &(c_2([M]+1))^{-2}\frac12\Lambda(\cC,2\gamma,2\gamma,t).
\end{align*}
Letting $L =\frac12 (c_2([M]+1))^{-2}$, we recover~\eqref{e.4.2} and conclude the proof of the lemma.

\end{proof}

\begin{remark}
Note that the conclusion of Lemma~\ref{l.4.8} implies that $\cG^M_t$ is non-empty for all $t$ large enough. However the same cannot be said about $\cG^1$ (or $\cG$). Lemma~\ref{l.key} only shows that there are infinitely many values of $t$, each of which is less than $2M$ from the next, such that $\cG^1_t$ is non-empty. A similar conclusion holds for $\cG$. 
\end{remark}

\begin{remark}
Since $\phi$ has the Bowen property on $\cG^1$ at scale $\vep>3\gamma$ for every $\gamma\in I_\delta$, the second scale $3\gamma$ on the left-hand side of~\eqref{e.4.2} is superfluous if one replaces $L$ by $Le^{-K(1)}$ where  $K(1) = K+2\Var(\phi,\vep)$. . 
\end{remark}

Next we present some consequences of Lemma~\ref{l.key}.


\begin{lemma}\label{l.cor1}
Let $\vep,\delta,\gamma$ be as in Lemma~\ref{l.key}, and let $M,T_1$ be given by~\eqref{e.MT} with $\alpha_1=1$. Then there exists a  constant $L_2 = L_2(\gamma)\in\RR^+$ such that for every $t\ge T_1$, there exists $T(t)\in[t-2M,t]$ such that
\begin{equation}\label{e.4.5.1}
\Lambda(\cG^1,\gamma,3\gamma,T(t))\ge e^{-L_2} e^{T(t)P(\phi)}. 
\end{equation}
Furthermore, there exists $L_3\in\RR^+$ for which
\begin{equation}\label{e.4.5.2}
\Lambda(\cG^1,\gamma,T(t))\ge e^{-L_3} e^{T(t)P(\phi)}. 
\end{equation}
\end{lemma}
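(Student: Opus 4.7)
The plan is to apply Lemma~\ref{l.key} to the trivial collection $\cC = \bM \times \RR^+$ with $\alpha_1 = 1$, and then do two routine bookkeeping steps: absorb the time shift $t \mapsto T(t)$ into a multiplicative constant to establish~\eqref{e.4.5.1}, and drop the auxiliary scale $3\gamma$ via the Bowen property to pass from~\eqref{e.4.5.1} to~\eqref{e.4.5.2}.

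First I would verify that the hypothesis of Lemma~\ref{l.key} is met at $\cC = \bM \times \RR^+$, namely $\Lambda(\bM, 2\gamma, 2\gamma, t) \ge e^{tP(\phi)}$ for all sufficiently large $t$. This is exactly the bound that appears in the statement of Lemma~\ref{l.4.9} (traceable back to Lemma~\ref{l.4.2}), so it is available for every $\gamma \in I_\delta$. Lemma~\ref{l.key} then produces $t_p, t_s \in [0,[M]]\cap\NN$ with $T(t) = t - (t_p+t_s) \in [t-2M, t]$ such that
$$
\Lambda\bigl(\varphi_{t_p,t_s}(\bM\times\RR^+) \cap \cG^1,\ \gamma,\ 3\gamma,\ T(t)\bigr) \ \ge\ L\,\Lambda(\bM, 2\gamma, 2\gamma, t) \ \ge\ L\,e^{tP(\phi)}.
$$
Since shifting and truncating the full collection gives back $\bM\times\RR^+$ itself, the left-hand side is exactly $\Lambda(\cG^1, \gamma, 3\gamma, T(t))$. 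Using $0 \le t - T(t) \le 2M$ I bound $e^{tP(\phi)} \ge e^{T(t)P(\phi)} e^{-2M|P(\phi)|}$, so~\eqref{e.4.5.1} holds with $L_2 = -\log L + 2M|P(\phi)|$.

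For~\eqref{e.4.5.2}, I would discard the second scale using the Bowen property. Since $\phi$ has the Bowen property on $\cG$ at scale $\vep$, the remark following Definition~\ref{d.Bowen} gives the Bowen property on $\cG^1 \subset \cG^M$ at the same scale $\vep$ with distortion constant $K(1) = K + 2\Var(\phi, \vep)$. Because $3\gamma \le 300 L_X \delta < \vep = 1000 L_X\delta$, for each $(x, T(t)) \in \cG^1$ and each $y \in B_{T(t)}(x, 3\gamma)$ we obtain $|\Phi_0(x,T(t)) - \Phi_0(y,T(t))| \le K(1)$, whence $\Phi_{3\gamma}(x, T(t)) \le \Phi_0(x, T(t)) + K(1)$. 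Summing over a maximizing $(T(t), \gamma)$-separated subset of $\cG^1_{T(t)}$ then yields
$$
\Lambda(\cG^1, \gamma, 3\gamma, T(t)) \ \le\ e^{K(1)}\,\Lambda(\cG^1, \gamma, T(t)),
$$
so~\eqref{e.4.5.2} follows from~\eqref{e.4.5.1} with $L_3 = L_2 + K(1)$.

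The argument is almost entirely bookkeeping once Lemma~\ref{l.key} is in hand, so I do not anticipate any substantive obstacle. The only slightly delicate point is ensuring the lower bound $\Lambda(\bM, 2\gamma, 2\gamma, t) \ge e^{tP(\phi)}$ uniformly in $\gamma \in I_\delta$; fortunately this is already embedded in (and used by) Lemma~\ref{l.4.9}, so it can be invoked directly.
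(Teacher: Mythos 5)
Your proposal is correct and follows essentially the same route as the paper: apply Lemma~\ref{l.key} to $\cC=\bM\times\RR^+$ with $\alpha_1=1$ (using the lower bound $\Lambda(\bM,2\gamma,2\gamma,t)\ge e^{tP(\phi)}$ from Lemma~\ref{l.4.2}), absorb the bounded time shift $t-T(t)\le 2M$ into the constant $L_2$, and remove the second scale $3\gamma$ via the Bowen property on $\cG^1$ with distortion constant $K(1)$, giving $L_3=L_2+K(1)$. The only cosmetic difference is that you note $\varphi_{t_p,t_s}(\bM\times\RR^+)\cap\cG^1=\cG^1$, whereas the paper simply uses the monotonicity inequality $\Lambda(\cG^1,\gamma,3\gamma,T(t))\ge\Lambda(\varphi_{t_p,t_s}(\bM\times\RR^+)\cap\cG^1,\gamma,3\gamma,T(t))$; both are fine.
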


\begin{proof}
Recall that $\Lambda(\bM,2\gamma,2\gamma,t)\ge e^{tP(\phi)}$ by Lemma~\ref{l.4.2}. Applying Lemma~\ref{l.key} on $\cC = \bM\times\RR^+$ with $\alpha_1=1$, we obtain $M, L\in\RR^+$ such that for every $t\ge T_1$, there exists $t_p,t_s=0,\ldots,[M]$ such that, with $T(t) = t-t_p-t_s$,
\begin{align*}
\Lambda(\cG^1,\gamma,3\gamma,T(t))\ge &\Lambda\left(\varphi_{t_p,t_s}(\bM\times\RR^+)\cap \cG^1,\gamma,3\gamma,T(t)\right)\\
\ge & L \cdot \Lambda(\bM, 2\gamma,2\gamma,t)\\
\ge& Le^{tP(\phi)}\\
\ge &Le^{(t_p+t_s)P(\phi)}e^{T(t)P(\phi)}.
\end{align*}
Since $t_p$ and $t_s$ are bounded by $M$, we can take $L_2>0$ sufficiently large so that $e^{-L_2}\le Le^{(t_p+t_s)P(\phi)}$.  So~\eqref{e.4.5.1} holds.

To obtain~\eqref{e.4.5.2}, recall that if $\phi$ has the Bowen property on $\cG$, then it has the Bowen property on $\cG^1$ at the same scale but with a different distortion constant $K(1) = K+2\Var(\phi,\vep)$. We thus obtain from~\eqref{e.4.5.1}:
$$
\Lambda(\cG^1,\gamma,T(t))\ge e^{-K(1)}\Lambda(\cG^1,\gamma,3\gamma,T(t))\ge e^{-(K(1)+L_2)}e^{T(t)P(\phi)},
$$
which gives~\eqref{e.4.5.2} with $L_3 = L_2+K(1)$.

\end{proof}

We conclude this section with the following upper and lower bound for the partition function $\Lambda(\bM,\gamma,t)$ (this means that the second scale is 0). Note that although the statement resembles that of~\cite[Lemma 4.11]{CT16}, the proof must be modified due to the proof of~\cite[Lemma 4.11]{CT16} relying (implicitly) on the tail specification of $\cG^M$. 

\begin{lemma}\label{l.4.11}
Assume that $\cG$ has tail specification at scale $\delta$,  $\varphi_t$ is almost expansive at scale $\vep$, and $\phi$ has the Bowen property  on $\cG$ at scale $\vep$.  For any $\gamma\in I_\delta$, let $T_1 = T_1(\gamma,1)$ be the constant given in Lemma~\ref{l.key} with $\alpha_1 =1$. Then there exists  $C_3 = C_3(\gamma)>0$ such that for every $t\ge T_1$,
$$
C_3^{-1}e^{tP(\phi)}\le\Lambda(\bM, 2\gamma,t)\le \Lambda(\bM,2\gamma,2\gamma, t)\le C_3 e^{tP(\phi)}.
$$
\end{lemma}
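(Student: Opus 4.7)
The middle inequality $\Lambda(\bM,2\gamma,t)\le\Lambda(\bM,2\gamma,2\gamma,t)$ is immediate from $\Phi_0\le\Phi_{2\gamma}$, so only the outer bounds need work. For the upper bound my plan is to split $\bM=\cD_t\cup\cD^c_t$: since $\gamma\in I_\delta$ gives $\delta\le 2\gamma$ and $2\gamma\le\vep$, the monotonicity~\eqref{e.monotone} and the pressure gap (III) together yield the exponentially small contribution
\[
\Lambda(\cD^c,2\gamma,2\gamma,t)\le\Lambda(\cD^c\cup[\cP]\cup[\cS],\delta,\vep,t)\le D_1\, e^{t(P(\phi)-q)}
\]
for some $D_1,q>0$. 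For the $\cD$-piece I will reuse the injection from the proof of Lemma~\ref{l.key} essentially verbatim, only now in an upper-bound direction: given a $(t,2\gamma)$-separated $E\subset\cD_t$, for each $x\in E$ decompose using $i=[p(x,t)]$, $k=[s(x,t)]$ and group $E$ by $(i,k)$. Choosing maximal $(\cdot,\gamma)$-separated sets $E^P_i,E^G_{t-i-k},E^S_k$ in $[\cP]_i,\cG^1_{t-i-k},[\cS]_k$, the same injection $\pi$ and three-piece potential estimate give
\[
\sum_{x\in E(i,k)}e^{\Phi_{2\gamma}(x,t)}\le\Lambda([\cP],\gamma,3\gamma,i)\cdot\Lambda(\cG^1,\gamma,3\gamma,t-i-k)\cdot\Lambda([\cS],\gamma,3\gamma,k).
\]

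The three factors are controlled separately: the outer two by monotonicity into scales $(\delta,\vep)$ followed by the pressure gap (III), and the middle one by Lemma~\ref{l.G1}. This produces a bound of the form $D_1^2 C''_2\, e^{tP(\phi)}\,e^{-q(i+k)}$ on $\sum_{x\in E(i,k)}e^{\Phi_{2\gamma}(x,t)}$; summing the resulting geometric series over $(i,k)\in\ZZ_{\ge 0}^2$ yields $\Lambda(\cD,2\gamma,2\gamma,t)\le C_4\, e^{tP(\phi)}$, and combining with the $\cD^c$-estimate completes the upper bound.

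For the lower bound, my plan is to invoke Lemma~\ref{l.cor1}~\eqref{e.4.5.2} \emph{with its parameter $\gamma$ replaced by $2\gamma$}. This produces $T^\dagger(t)\in[t-2M',t]$ and $L_3'>0$ with $\Lambda(\cG^1,2\gamma,T^\dagger(t))\ge e^{-L_3'}e^{T^\dagger(t)P(\phi)}$; the inclusion $\cG^1\subset\bM\times\RR^+$ transfers this to $\bM$ at time $T^\dagger(t)$. To move from $T^\dagger(t)$ to $t$ I will observe that any $(T^\dagger(t),2\gamma)$-separated subset of $\bM$ is automatically $(t,2\gamma)$-separated (because $d_t\ge d_{T^\dagger(t)}$), and the trivial bound $\Phi_0(x,t)\ge\Phi_0(x,T^\dagger(t))-2M'\|\phi\|_\infty$ gives $\Lambda(\bM,2\gamma,t)\ge e^{-2M'\|\phi\|_\infty}\Lambda(\bM,2\gamma,T^\dagger(t))$; absorbing the further factor $e^{T^\dagger(t)P(\phi)}\ge e^{-2M'|P(\phi)|}e^{tP(\phi)}$ yields the conclusion with $C_3^{-1}=e^{-2M'(\|\phi\|_\infty+|P(\phi)|)-L_3'}$. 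The main obstacle is precisely this scale mismatch: Lemma~\ref{l.cor1} naturally delivers a lower bound at separation scale $\gamma$, while here one wants scale $2\gamma$. Feeding $2\gamma$ into Lemma~\ref{l.cor1} resolves it provided $2\gamma$ still lies in the usable range for the underlying Lemma~\ref{l.key}, which is the case for the values $\gamma=22L_X\delta$ and $\gamma=5L_X\delta$ singled out in Section~\ref{s.2'}.
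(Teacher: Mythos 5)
Your upper bound argument is correct, and it takes a genuinely different route from the paper. The paper obtains $\Lambda(\bM,2\gamma,2\gamma,t)\le C_3e^{tP(\phi)}$ in one stroke by applying Lemma~\ref{l.key} to $\cC=\bM\times\RR^+$ (which gives $\Lambda(\cG^1,\gamma,3\gamma,T(t))\ge L\,\Lambda(\bM,2\gamma,2\gamma,t)$) and then invoking the upper bound on $\cG^1$ from Lemma~\ref{l.G1}. You instead rebuild the injection $\pi$ from the proof of Lemma~\ref{l.key} from scratch, run it over \emph{all} integer prefix/suffix lengths $(i,k)$ rather than only $i,k\le[M]$, and use the pressure gap (III) directly to make the resulting double sum a convergent geometric series; the $\cD^c$-tail is handled separately via monotonicity into scales $(\delta,\vep)$ and (III) again. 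This is longer but has the advantage of avoiding Lemma~\ref{l.4.8}/Lemma~\ref{l.key} entirely on the upper-bound side, so it makes the role of the pressure gap more transparent. (Just make sure $E^P_i,E^S_k,E^G_{t-i-k}$ are chosen \emph{maximal} separated, and that the bound $\Lambda([\cP],\delta,\vep,i)\le D_1e^{i(P(\phi)-q)}$ is patched for small $i,k$ by enlarging $D_1$, which you implicitly allow.)

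The lower bound, however, has a genuine gap. You invoke Lemma~\ref{l.cor1} with its parameter replaced by $2\gamma$, so the hypothesis $2\gamma\in I_\delta=[4L_X\delta,100L_X\delta]$ is required; this forces $\gamma\le 50L_X\delta$, while the lemma is stated for all $\gamma\in I_\delta$ up to $100L_X\delta$. You flag the restriction yourself and note it suffices for the two values $\gamma=22L_X\delta$ and $\gamma=5L_X\delta$ actually used later, but as a proof of the lemma as stated it does not cover the upper half of $I_\delta$. The simpler and complete route is the paper's: Lemma~\ref{l.4.10} already gives $\Lambda(\cG^M,2\gamma,t)\ge e^{-L_1}e^{tP(\phi)}$ for every $\gamma\in I_\delta$ and $t\ge T_1$, and since $\cG^M_t\subset\bM$ one immediately has $\Lambda(\bM,2\gamma,t)\ge\Lambda(\cG^M,2\gamma,t)$, with no time shift $T^\dagger(t)$, no scale doubling, and no restriction on $\gamma$. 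By routing through $\cG^1$ via Lemma~\ref{l.cor1} you pick up the $t\mapsto T(t)$ reparametrization and the $\gamma\mapsto 2\gamma$ rescaling, both of which are avoidable here; the detour through $\cG^1$ is the mechanism of the upper bound, not the lower one.
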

\begin{proof}
By Lemma~\ref{l.4.10} and the monotonicity of $\Lambda(\bM,*,t)$ (see~\eqref{e.monotone}), we have 
$$ e^{-L_1}e^{tP(\phi)} \le \Lambda(\cG^M,2\gamma,t)\le \Lambda(\bM,2\gamma,t ) 
$$
which gives the first inequality. 
The second inequality is obvious. To prove the third inequality, we use Lemma~\ref{l.key} on $\cC =\bM\times\RR^+$ and $\alpha_1=1$ to get
\begin{align*}
\Lambda(\cG^1, \gamma,3\gamma,T(t))\ge &\Lambda\left(\varphi_{t_p,t_s}(\bM\times\RR^+)\cap \cG^1,2\gamma,2\gamma,T(t)\right)\\
\ge & L \cdot \Lambda(\bM, 2\gamma,2\gamma,t).
\end{align*}

On the other hand,  Lemma~\ref{l.G1} with $\gamma\in I_\delta$ and $\eta = 3\gamma$ gives
$$
\Lambda(\cG^1,\gamma,3\gamma,T(t))\le C_2''e^{tP(\phi)},
$$
so we have 
$$
\Lambda(\bM, 2\gamma,2\gamma,t)\le C_2''L^{-1}e^{tP(\phi)}.
$$
\end{proof}

\section{Lower Gibbs bound on $\cG^1$}\label{s.5}
From now on we will assume that the assumptions of Theorem~\ref{m.1} hold. More specifically, we have, for $\vep=1000L_X\delta>0$: 
\begin{enumerate}[label={(\Roman*)}]
\item[{(0)}] $\varphi_t$ is almost expansive at scale $\vep$;
\item  $\cG$ has tail (W)-specification at scale $\delta$;
\item the potential $\phi$ has the Bowen property at scale $\vep$ on $\cG$;
\item $P(\cD^c\cup[\cP]\cup[\cS],\phi,\delta,\vep)<P(\phi)$. 
\end{enumerate}
In particular, all the lemmas in the previous two sections hold. We fix  $\rho = 11L_X\delta$ and $\rho' = 10L_X\delta$ as mentioned in Section~\ref{s.2'}, and note that $\rho,\rho'\in I_\delta$. We will also take
$$
M_0 = M(2\rho,1)
$$
as discussed at the end of  Section~\ref{s.2'}.

\subsection{Construction of an equilibrium state}\label{s.5.1}
The construction of an equilibrium state $\mu$ is standard. For $t>0$ we take a maximizing $(t, \rho')$-separated set $E_t$ for $\Lambda(\bM,\rho',t)$.\footnote{Following our notation in Section~\ref{s.2}, this means the second scale is zero.} Then we consider 
\begin{equation}\label{e.mu}
\begin{split}
&\nu_t := \frac{\sum_{x\in E_t}\exp(\Phi_0(x,t))\cdot \delta_x}{\sum_{x\in E_t}\exp(\Phi_0(x,t))}, \mbox{ and }\\
&\mu_t := \frac1t \int_0^t(\varphi_s)_*\nu_t\, ds;
\end{split}
\end{equation}
here $\delta_x$ is the point mass at $x$. We take a subsequence $n_k\to\infty $ which we assume to be integers, such that $\mu_{n_k}$ converges to  a measure $\mu$ in the weak-* topology. Since the flow $\varphi_t$ is almost expansive, $\mu$ must be an equilibrium state (\cite[Lemma 4.14, Proposition 4.15]{CT16}; see also~\cite[Theorem 8.6]{Wal}).

\subsection{Gibbs property on $\cG^1$}\label{s.5.2}
Below we will show that $\mu$ has certain form of lower Gibbs property along orbit segments in $\cG^1$. This improves~\cite[Lemma 4.16]{CT16} by dropping $M$. In the next section, we will use Lemma~\ref{l.gibbs} to show that all equilibrium states must be absolutely continuous with respect to $\mu$.

\begin{lemma}\label{l.gibbs}
There exists $T_2>0$, $Q>0$ such that for every $(x,t)\in \cG^1$ with $t>T_2$, we have 
\begin{equation}\label{e.gibbs}
\mu(B_t(x,\rho))\ge Qe^{-tP(\phi)+\Phi_0(x,t)}.
\end{equation}
\end{lemma}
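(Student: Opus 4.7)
The plan is to adapt Bowen's classical Gibbs argument to work with the lower bound on $\cG^1$ provided by Lemma~\ref{l.cor1}, in place of an analogous bound on $\cG^M$. Writing
$$
\mu_n(B_t(x,\rho)) = \frac{1}{n}\sum_{w\in E_n}\frac{e^{\Phi_0(w,n)}}{\Lambda(\bM,\rho',n)}\,\Leb\{s\in[0,n]:\varphi_s(w)\in B_t(x,\rho)\},
$$
it suffices to construct, for each of roughly $n$ integer insertion times $s^*$, a large collection of points in $E_n$ whose orbits at time $s^*$ lie near $x$ in the Bowen metric $d_t$. Apply Lemma~\ref{l.cor1} with $\gamma = 2\rho\in I_\delta$ and $\alpha_1 = 1$, obtaining $M_0$, $T_1$, $L_3$. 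Let $\tau$ denote the maximum gap of the tail (W)-specification of $\cG^1$ at scale $L_X\delta$, inherited from that of $\cG$ at scale $\delta$ as in the proof of Lemma~\ref{l.G1}. For each $s^*$ in a spaced subset $\cS\subset[T_1,\,n-t-T_1-2\tau]$, set $T_1' = s^*$ and $T_2' = n-s^*-t-2\tau$; Lemma~\ref{l.cor1} then provides $T_i^*\in[T_i'-2M_0,T_i']$ together with $(T_i^*,2\rho)$-separated sets $F_i\subset\cG^1_{T_i^*}$ satisfying $\sum_{y\in F_i}e^{\Phi_0(y,T_i^*)}\ge e^{-L_3}e^{T_i^*P(\phi)}$. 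For each $(y_1,y_2)\in F_1\times F_2$, apply specification to the triple $(y_1,T_1^*),(x,t),(y_2,T_2^*)$ to obtain a shadowing orbit $w$, and by maximality of $E_n$ pick a representative $e = e_{s^*,y_1,y_2}\in E_n$ with $d_n(w,e)\le\rho'$.

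The middle-piece shadowing gives $d_t(\varphi_{T_1^*+\tilde\tau_1}(e),x)\le\rho'+L_X\delta=\rho$, where $\tilde\tau_1\in[0,\tau]$ is the actual first gap, so by uniform continuity of the flow we have $\varphi_s(e)\in B_t(x,\rho)$ for $s$ in an interval of length at least $2h>0$ around $T_1^*+\tilde\tau_1$; here $h$ depends only on the flow and on a small auxiliary shrinking of $\rho'$ that ensures strict containment in the open ball. The Bowen property of $\phi$ on $\cG^1$ at scale $\vep>\rho$, with distortion constant $K(1) = K + 2\Var(\phi,\vep)$, applied separately to $(x,t)$ and to each $(y_i,T_i^*)$, yields
$$
\Phi_0(e,n)\ge \Phi_0(x,t)+\Phi_0(y_1,T_1^*)+\Phi_0(y_2,T_2^*)-C_1,
$$
where $C_1$ absorbs $3K(1)$, the Birkhoff losses of size $\|\phi\|_\infty\cdot O(\tau+M_0)$ from the two unshadowed gaps and the final tail $n-T_1^*-\tilde\tau_1-t-\tilde\tau_2-T_2^*$, and an $|P(\phi)|\cdot O(1)$ adjustment from $T_i^*-T_i'$. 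Injectivity of $(y_1,y_2)\mapsto e_{s^*,y_1,y_2}$ for fixed $s^*$ follows from the $(T_i^*,2\rho)$-separation of $F_i$ together with $2\rho = 2\rho'+2L_X\delta$; the possible variation of $(\tilde\tau_1,\tilde\tau_2)\in[0,\tau]^2$ is handled by first partitioning $F_1\times F_2$ into finitely many subcollections on each of which the gap pair is constant, losing only an absorbable combinatorial factor.

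Summing over $(y_1,y_2)$ and using Lemma~\ref{l.4.11} with $\gamma = \rho'/2\in I_\delta$ to upper bound $\Lambda(\bM,\rho',n)\le C_3 e^{nP(\phi)}$, one obtains $\sum_{(y_1,y_2)}p_{e_{s^*,y_1,y_2}}\ge Q_0\,e^{-tP(\phi)+\Phi_0(x,t)}$ for a constant $Q_0>0$ independent of $n$ and $s^*$, where $p_w = e^{\Phi_0(w,n)}/\Lambda(\bM,\rho',n)$. Taking $\cS$ with spacing $\eta > 2M_0+\tau+2h$ guarantees that the intervals of length $2h$ around $T_1^*+\tilde\tau_1$ corresponding to distinct $s^*\in\cS$ are pairwise disjoint, so the contributions add without overcounting and
$$
\int_0^n\nu_n(\varphi_{-s}(B_t(x,\rho)))\,ds\ge 2h\sum_{s^*\in\cS}\sum_{(y_1,y_2)} p_{e_{s^*,y_1,y_2}}\ge \frac{2h}{\eta}\,n\,Q_0\,e^{-tP(\phi)+\Phi_0(x,t)}.
$$
Dividing by $n$ yields a uniform-in-$n$ lower bound on $\mu_n(B_t(x,\rho))$; passing to the weak-$*$ limit along $n_k$, together with a standard inner approximation of the open ball $B_t(x,\rho)$, completes the proof with $Q = 2hQ_0/\eta$ (possibly slightly reduced by the approximation).

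The main obstacle is the simultaneous bookkeeping of three $O(1)$ losses — the variable gaps of the weak specification, the time change $T_i^*\in[T_i'-2M_0,T_i']$ inherent in Lemma~\ref{l.cor1}, and the tight scale relation $\rho = \rho'+L_X\delta$ which forces the auxiliary shrinking of $\rho'$ — while preserving injectivity of $(y_1,y_2)\mapsto e$ for fixed $s^*$, disjointness of the $s$-windows across distinct $s^*\in\cS$, and the final ratio computation. Each is routine in spirit but needs careful tracking in the style of the analogous argument in~\cite{CT16}.
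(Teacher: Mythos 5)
Your skeleton (Lemma~\ref{l.cor1} for the two flanking $\cG^1$-segments, specification of $\cG^1$ at scale $L_X\delta$, the Bowen property for the weight comparison, Lemma~\ref{l.4.11} for the denominator) matches the paper's, but your replacement of the paper's continuum of insertion times by a spaced set $\cS$ plus a time window of length $2h$ breaks down at the stated scales. You only obtain $d_t(\varphi_{T_1^*+\tilde\tau_1}(e),x)\le \rho'+L_X\delta$, and the parameters fixed in Section~\ref{s.2'} make this \emph{exactly} $\rho=11L_X\delta$: the strict inequality coming from the specification property carries no uniform margin, so no $h>0$ depending only on the flow can force $\varphi_s(e)\in B_t(x,\rho)$ on an interval of positive length around $T_1^*+\tilde\tau_1$. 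Your proposed remedy, a ``small auxiliary shrinking of $\rho'$'', is not available: $\rho'$ is the separation scale used to build $E_n$ and hence the measure $\mu$ whose Gibbs property is being proved, and maximality of $E_n$ only yields a representative within $\rho'$, not within anything smaller. The paper needs no margin because it argues differently: for \emph{every} real $r$ in an interval of length comparable to $s$ it maps $E'_{u_1'}\times E'_{u_2'}$ into $E_s$ using a time-shifted copy of the shadowing orbit (shift $(u_1'-u_1)+(\tau_1(\mathbf{x})-\tau^1)$), so the middle piece sits exactly at time $r$ and the constructed point lies strictly inside $\varphi_{-r}(B_t(x,\rho))$; it then integrates over $r$ and lets $s\to\infty$. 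To keep your window scheme you would have to open a genuine gap between $\rho$ and $\rho'+L_X\delta$, i.e.\ change the fixed parameters and recheck their later uses (for instance the disjointness of the $\rho$-balls over $2\rho$-separated centers in Section~\ref{s.6}).

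A second gap is the injectivity bookkeeping. Partitioning $F_1\times F_2$ into subcollections on which the gap pair $(\tilde\tau_1,\tilde\tau_2)$ is \emph{exactly} constant does give injectivity on each class, but the number of classes is finite only because $F_1\times F_2$ is finite; it is not uniformly bounded (generically it is comparable to $\#F_1\cdot\#F_2$, which grows exponentially in $n$), so the lost ``combinatorial factor'' is not absorbable. If instead you discretize $[0,\tau]^2$ into a bounded number of cells, injectivity within a cell requires a positive margin $2\rho-2(\rho'+L_X\delta)>0$, which again is zero here. What is actually needed is not injectivity but a \emph{bounded multiplicity} for the map into $E_n$, and this is exactly what the paper imports: the adaptation of \cite[Lemma 4.4]{CT16} giving $\#\pi^{-1}(z)\le D^3$, together with the Birkhoff-sum comparison of \cite[Lemma 4.5]{CT16} (your constant $C_1$, the paper's $C_4$). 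With these two repairs --- either adopting the paper's shift-and-integrate argument, or re-fixing the parameters so that $\rho>\rho'+L_X\delta$ with quantitative room and replacing the exact-constancy partition by the $D^3$ multiplicity bound --- your outline would go through.
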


\begin{remark}\label{r.5.1.M}
In~\cite[Lemma 4.16]{CT16} a similar result is obtained for orbit segments in $\cG^M$ for all $M$ sufficiently large, with the constant $Q_M$ depending on $M$. Lemma~\ref{l.gibbs}  generalizes this result to $\cG^1$; however, it is worth noting that the constant $Q$ here also depends implicitly on the constant $M_0 = M(2\rho,1)$.
\end{remark}

The proof of this lemma is similar to the proof of~\cite[Lemma 4.16]{CT16} with three major differences:
\begin{itemize}
\item In~\cite[Lemma 4.16]{CT16} the separated sets $E'_u$ are taken from $\cG^M_u$; here we take the separated sets from $\cG^1_u$ and have to change the scale of the specification from $\delta$ to $L_X\delta$.
\item We use the lower bound on $\cG^1$ obtained  in Lemma~\ref{l.cor1}, which requires us to make a small change of time $u' = T(u)$ with $|u'-u|\le 2M_0$. This affects the choice of certain constants, making them depend on $M_0$.
\item Lemma~\ref{l.key}, and consequently Lemma~\ref{l.cor1}, changes the scale of the partition function from $(2\gamma,2\gamma)$ to $(\gamma,3\gamma)$. Therefore we need to apply Lemma~\ref{l.cor1} at scale $\gamma=2\rho$ as oppose to  $\gamma=\rho$ in~\cite{CT16}.
\end{itemize}

Now we are ready to prove Lemma~\ref{l.gibbs}.
\begin{proof}
First, recall that since $\varphi_t$ has tail (W)-specification on $\cG$ with scale $\delta$, it also  has tail (W)-specification on $\cG^1$ at scale $L_X \delta$ with gluing time $\tau^1$. 

We apply Lemma~\ref{l.cor1} with $\gamma = 2\rho = 22L_X\delta\in I_\delta$ to obtain constants $M_0 = M(2\rho,1), T_1 = T_1(2\rho,1)$ and $L_3 = L_3(2\rho)$ so that whenever $t>T_1$,
$$
\Lambda(\cG^1,{2\rho},T(t))\ge e^{-L_3} e^{T(t)P(\phi)}
$$ 
for some $T(t)\in [t-2M_0,t]$. Without loss of generality we assume that the specification property on $\cG^1$ holds for all orbit segments with time longer than $T_1$. Given $(x,t)\in\cG^1$ with $t>T_1$, we estimate $\mu(B_t(x,\rho))$ by estimating $\nu_s(\varphi_{-r}(B_t(x,\rho)))$
for $s\gg t$ and $r\in(\tau^1+T_1, s-2\tau^1-2T_1-t)$. 


Given $s$ and $r$, let $u_1:=r-\tau^1$ and
$u_2:=s-r-t-\tau^1$, so $u_1,u_2>T_1$. By Lemma~\ref{l.cor1}, denote by $u_1^\prime=T(u_1)$ and $u_2^\prime=T(u_2)$, and take a $(u_1^\prime,{2\rho})$-separated set of $\cG^1_{u_1^\prime}$ which we denote by  $E^\prime_{u_1^\prime}$, such that
\begin{equation}\label{eq.u1prime}
\sum_{x\in E'_{u_1'}}e^{\Phi_0(x,u_1')} \geq \frac{1}{2}e^{-L_3}e^{u_1^\prime P(\phi)};
\end{equation}
$E^\prime_{u_2^\prime}$ is taken as a $(u_2^\prime, 2\rho)$-separated set of $\cG^1_{u_2^\prime}$ with a similar property. See~\cite[Fig 4]{CT16}.

We use  ideas similar to the proof of~\cite[Proposition 4.3]{CT16} to construct a map $\pi: E_{u_1^\prime}^\prime \times E_{u_2^\prime}^\prime \to E_s$
as follows. By the specification property of $\cG^1$, for each $\textbf{x} = (x_1,x_2)\in E_{u_1^\prime}^\prime\times E_{u_2^\prime}^\prime$,
we can find a point $y(\textbf{x})\in\bM$  and $\tau_1(\textbf{x}), \tau_2(\textbf{x})\in[0,\tau^1]$ so that
\begin{align*}
y(\textbf{x})\in&\, B_{u_1^\prime}(x_1,L_X\delta),\\
\numberthis \label{e.5.1.1}\varphi_{u_1^\prime+\tau_1(\textbf{x})}(y(\textbf{x}))\in & \,B_t(x,L_X\delta),\\
\varphi_{u_1^\prime+\tau_1(\textbf{x})+t+\tau_2(\textbf{x})}(y(\textbf{x}))\in& \,B_{u_2^\prime}(x_2,L_X\delta).
\end{align*}

Recall that $\rho^\prime=10L_X\delta$ and $\rho = 11L_X\delta$. 
Let $E_s$ denote the maximizing $(s,\rho^\prime)$-separated set of $\bM$ used in the construction of $\nu_s$ and $\mu_s$. 
Let $\pi: E^\prime_{u'_1}\times E^\prime_{u'_2}\to E_s$ be given by
choosing a point $\pi(\textbf{x})\in E_s$ such that
$$d_s(\pi(\textbf{x}),\varphi_{(u_1^\prime - u_1)+ (\tau_1(\textbf{x})-\tau^1)}(y(\textbf{x})))\leq \rho^\prime.$$

For any $\textbf{x}\in E^\prime_{u_1}\times E^\prime_{u_2}$, we have
\begin{align*}
& d_t(\varphi_r(\pi(\textbf{x})),x)\\\leq&\, d_t\left(\varphi_r(\pi(\textbf{x})),\varphi_{r+(u_1^\prime-u_1)+(\tau_1(\textbf{x})-\tau^1)}(y(\textbf{x}))\right)+d_t\left(\varphi_{r+(u_1^\prime-u_1)+(\tau_1(\textbf{x})-\tau^1)}(y(\textbf{x})),x\right)\\<&\,\rho^\prime+L_X\delta=\rho,
\end{align*}
where the second inequality follows from the choice of $y(\textbf{x})$ by~\eqref{e.5.1.1} and the observation that (recall $u_1 = r-\tau^1$)
$$
r+(u_1'-u_1) + (\tau_1(\textbf{x})-\tau^1) = u_1 + u_1'-u_1 +\tau_1(\textbf{x}) = u_1'+\tau_1(\textbf{x}).
$$
This shows that
\begin{equation}\label{eq.pointapproach}
\pi(\textbf{x})\in \varphi_{-r}(B_t(x,\rho)).
\end{equation}

The proof of~\cite[Lemma 4.4]{CT16} shows there is a constant $D>0$ such that 
\begin{equation}\label{e.6.2.1}
\#\pi^{-1}(z)\leq D^3
\end{equation}
for every
$z\in E_s$. Moreover, a mild adaptation of the proof of~\cite[Lemma 4.5]{CT16}\footnote{Note that~\cite[Lemma 4.4 and 4.5]{CT16} are under the assumption that $\cG$ has specification at scale $\delta$, and therefore can be adapted to our setting with minimal modification.} gives the existence of $C_4$ (depending on $M_0$) such
that
\begin{equation}\label{eq.seperatepoints}
\Phi_0(\pi(\textbf{x}),s)\geq -C_4+\Phi_0(x_1,u_1^\prime)+\Phi_0(x_2,u_2^\prime)+\Phi_0(x,t).
\end{equation}
Note that the dependence on $M_0$ is due to $u_1'+u_2'+t\in[s-4M_0-2\tau^1,s]$ whereas in~\cite[Lemma 4.16]{CT16} one has $u_1+u_2+t\in[s-2\tau^1,s]$.

The rest of the proof is largely the same as~\cite[Lemma 4.16]{CT16}. We have 
\begin{align*}
\numberthis\label{e.5.1.2}\nu_s(\varphi_{-r}(B_t(x,\rho)))=&\frac{\sum_{z\in E_s} e^{\Phi_0(z,s) \delta_z(\varphi_{-r}(B_t(x,\rho)))}}{\sum_{z\in E_s} e^{\Phi_0(z,s)}}
\\
\geq &\,\, D^{-3}\left(\sum_{\textbf{x}\in E_{u_1^\prime}^\prime\times E_{u_2^\prime}^\prime} e^{\Phi_0(\pi(\textbf{x}),s)}\right)\cdot\left(\sum_{z\in E_s} e^{\Phi_0(z,s)}\right)^{-1}.
\end{align*}
where we use \eqref{eq.pointapproach} and~\eqref{e.6.2.1} for the estimate on the numerator.

To control the last term, we apply Lemma~\ref{l.4.11} with $2\gamma = \rho'$, i.e., $\gamma = 5L_X\delta\in I_\delta$ to obtain, for $s$ sufficiently large,
$$
\sum_{z\in E_s} e^{\Phi_0(z,s)}\le \Lambda(\bM,\rho',s) \le C_3e^{sP(\phi)}.
$$
This, together with~\eqref{e.5.1.2}, gives
\begin{equation}\label{e.6.1.2}
\nu_s(\varphi_{-r}(B_t(x,\rho)))\ge D^{-3} C_3^{-1}e^{-sP(\phi)}\cdot \sum_{\textbf{x}\in E_{u_1^\prime}^\prime\times E_{u_2^\prime}^\prime} e^{\Phi_0(\pi(\textbf{x}),s)}.
\end{equation}

On the other hand, by \eqref{eq.seperatepoints} and \eqref{eq.u1prime} we have
\begin{equation*}
\begin{split}
\sum_{\textbf{x}\in E_{u_1^\prime}^\prime \times E_{u_1^\prime}^\prime} e^{\Phi_0(\pi(\textbf{x},s))}&\geq e^{-C_4} e^{\Phi_0(x,t)}\left(\sum_{x_1\in E_{u_1^\prime}^\prime} e^{\Phi_0(x_1,u_1^\prime)}\right)\left(\sum_{x_2\in E_{u_2^\prime}^\prime} e^{\Phi_0(x_2,u_2^\prime)}\right)\\
&\geq \frac{1}{4}e^{-C_4}e^{\Phi_0(x,t)}e^{-2L_3}e^{u_1^\prime P(\phi)}e^{u_2^\prime P(\phi)}.\\
\end{split}
\end{equation*}
Together with \eqref{e.6.1.2} and the fact that $s=u_1+u_2+t+2\tau^1$, this gives
$$\nu_s(\varphi_{-r}(B_t(x,\rho)))\geq C_5 e^{\Phi_0(x,t)}e^{(u_1^\prime+u_2^\prime-s)P(\phi)}= C_5 e^{(-2\tau^1)P(\phi)}e^{-tP(\phi)+\Phi_0(x,t)},$$
for every $r\in (T_1+\tau^1, s-2\tau^1-2T_1-t)$. Integrating over $r$ gives
\begin{equation*}
\begin{split}
\mu_s(B_t(x,\rho))&\geq \frac{1}{s}\int_{T_1+\tau^1}^{s-2\tau^1-2T_1-t} \nu_s(\varphi_{-r}(B_t(x,\rho)))\\
&\geq \left(1-\frac{t+3\tau^1+3T_1}{s}\right)C_5e^{-2\tau^1 P(\phi)}e^{-tP(\phi)+\Phi_0(x,t)}.\\
\end{split}
\end{equation*}
We conclude the proof by sending $s\to\infty.$

\end{proof}

The next lemma improves~\cite[Lemma 4.17]{CT16} and will be useful to establish the ergodicity of $\mu$.
\begin{lemma}\label{l.mixing}
There exists $Q'>0$ such that for every $(x_1,t_1)$, $(x_2,t_2)\in \cG^1$ with $t_1,t_2\ge T_1,$ and every $q>2\tau^1+T_1$, there exists $q'\in[q-2\tau^1-2M_0,q]$ such that 
$$
\mu(B_{t_1}(x_1,\rho)\cap \varphi_{-(t_1+q')} B_{t_2}(x_2,\rho))\ge Q'e^{-(t_1+t_2)P(\phi) + \Phi_0(x_1,t_1)+\Phi_0(x_2,t_2)}.
$$
Furthermore, we can choose $N\in\NN$ such that $q'$ can be taken such that $q' = q-k-\frac{2i}{N}\tau^1$ for some $k \in \{0,1,\ldots,2[M_0]\}$ and $i\in\{0,\ldots,N\}$.
\end{lemma}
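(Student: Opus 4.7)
The plan is to extend the argument of Lemma~\ref{l.gibbs} by gluing \emph{five} orbit segments from $\cG^1$ via the tail (W)-specification at scale $L_X\delta$, inserting a bridge segment between the two tracked orbits $(x_1,t_1)$ and $(x_2,t_2)$ to realize the desired gap. Recall that since $\cG$ has tail specification at scale $\delta$, $\cG^1$ has tail specification at scale $L_X\delta$ with some gluing time $\tau^1$. Given $q > 2\tau^1 + T_1$, the first step is to apply Lemma~\ref{l.cor1} at $\gamma = 2\rho \in I_\delta$ to the time $q - 2\tau^1$, producing an integer $k \in \{0,\ldots,2[M_0]\}$ and a bridge length $v = (q-2\tau^1)-k$ together with a $(v,2\rho)$-separated set $E''_v \subset \cG^1_v$ satisfying $\sum_{z\in E''_v} e^{\Phi_0(z,v)} \ge \tfrac12 e^{-L_3} e^{v P(\phi)}$. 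Note that $k$ depends only on $q$, not on the triples chosen below.

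For $s$ large and $r$ in an appropriate middle range, set $u_1 = r-\tau^1$, $u_2 = s-r-t_1-v-t_2-3\tau^1$, and take separated sets $E'_{u'_j}\subset \cG^1_{u'_j}$ with $u'_j = T(u_j)$ from Lemma~\ref{l.cor1}, again with $\gamma=2\rho$. For each triple $\mathbf{x} = (x'_1,z,x'_2)\in E'_{u'_1}\times E''_v\times E'_{u'_2}$, the tail (W)-specification on $\cG^1$ produces a shadowing point $y(\mathbf{x})$ and transition times $\tau_a,\tau_b,\tau_c,\tau_d\in[0,\tau^1]$ such that $y(\mathbf{x})$ successively $L_X\delta$-shadows $x'_1$, $x_1$, $z$, $x_2$, $x'_2$. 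The gap between the $x_1$- and $x_2$-shadows is $q'(\mathbf{x}) := \tau_b(\mathbf{x}) + v + \tau_c(\mathbf{x}) \in [v,\,v+2\tau^1]$. Now fix $N\in\NN$ large enough that $(\sup_{\bM}|X|)\cdot (2\tau^1/N) < \rho - L_X\delta = 10L_X\delta$, and partition $[0,2\tau^1]$ into $N$ equal subintervals; by pigeonhole, a $1/N$-fraction of triples have $\tau_b(\mathbf{x})+\tau_c(\mathbf{x})$ in a single subinterval, which determines an index $i\in\{0,\ldots,N\}$ and a single target value $q'_* = q - k - \frac{2i}{N}\tau^1$ with $|q'(\mathbf{x}) - q'_*| \le 2\tau^1/N$. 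For these triples, the uniform time-Lipschitz bound $d(\varphi_\Delta p, p)\le (\sup_{\bM}|X|)\cdot |\Delta|$ applied pointwise along the orbit, combined with the triangle inequality and the $L_X\delta$-shadowing of $x_2$, gives $\pi(\mathbf{x})\in\varphi_{-r}\bigl(B_{t_1}(x_1,\rho)\cap\varphi_{-(t_1+q'_*)}B_{t_2}(x_2,\rho)\bigr)$, where $\pi(\mathbf{x})\in E_s$ is chosen exactly as in Lemma~\ref{l.gibbs}.

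The remaining counting mirrors Lemma~\ref{l.gibbs}: the map $\pi$ has bounded multiplicity (now at worst $D^5$); a five-term analogue of \eqref{eq.seperatepoints} via the Bowen property at scale $\vep$ on $\cG^1$ gives $\Phi_0(\pi(\mathbf{x}),s) \ge -C'_4 + \Phi_0(x'_1,u'_1)+\Phi_0(x_1,t_1)+\Phi_0(z,v)+\Phi_0(x_2,t_2)+\Phi_0(x'_2,u'_2)$, where the constant $C'_4$ depends on $M_0$ since $u'_1+t_1+v+t_2+u'_2\in[s-4\tau^1-6M_0,\,s]$; the three $e^{-L_3}$ lower bounds on the partition sums combine multiplicatively, the pigeonhole costs a factor $N$, and the denominator is controlled by $\Lambda(\bM,\rho',s)\le C_3 e^{sP(\phi)}$ from Lemma~\ref{l.4.11}. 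Integrating over $r$ and sending $s\to\infty$ along the subsequence $n_k$ yields the claim with $Q'$ proportional to $1/(N(2[M_0]+1))$. The principal obstacle is the pigeonhole/absorption step: one must check that $N$ can be fixed uniformly in the data, which works because the condition $(\sup_{\bM}|X|)\cdot(2\tau^1/N) < 10L_X\delta$ involves only globally fixed constants, so the same $N$ serves for every choice of $(x_1,t_1),(x_2,t_2)$ and $q$.
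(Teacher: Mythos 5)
Your construction is the one the paper intends: it follows \cite[Lemma 4.17]{CT16} with exactly the modifications listed after Lemma~\ref{l.mixing} --- separated sets taken in $\cG^1$ with lower bounds from Lemma~\ref{l.cor1}, a bridge segment of length $T(q-2\tau^1)$ from $\cG^1$ inserted between $(x_1,t_1)$ and $(x_2,t_2)$ to realize the gap (whence the integer $k\le 2[M_0]$), and a pigeonhole over the specification transition times to produce the finitely many values $q-k-\frac{2i}{N}\tau^1$. The counting part (multiplicity $D^5$, the five-term Bowen estimate, the denominator bound from Lemma~\ref{l.4.11}, integration over $r$) is routine and fine.

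However, the step you yourself single out as the principal obstacle --- absorbing the mismatch $|q'(\mathbf{x})-q'_*|\le 2\tau^1/N$ into the radius $\rho$ --- does not close with the budget you state. To place $\pi(\mathbf{x})\in E_s$ in $\varphi_{-r}\bigl(B_{t_1}(x_1,\rho)\cap\varphi_{-(t_1+q'_*)}B_{t_2}(x_2,\rho)\bigr)$ you must pay $\rho'=10L_X\delta$ for the $d_s$-approximation of the glued orbit by a point of $E_s$ \emph{and} $L_X\delta$ for the specification shadowing, and $\rho'+L_X\delta=\rho$ exactly: this is the zero-slack computation already used in Lemma~\ref{l.gibbs}. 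So the correct requirement on $N$ is $\sup_{\bM}|X|\cdot(2\tau^1/N)<\rho-\rho'-L_X\delta=0$, not $<\rho-L_X\delta=10L_X\delta$ as you wrote; no choice of $N$ makes the extra displacement fit, and the claimed inclusion fails as stated. The idea is right but the slack must be created beforehand, e.g.\ by re-choosing the global parameters in Section~\ref{s.2'} (say $\rho'=9L_X\delta$ while keeping $\rho=11L_X\delta$; the ratio $\vep=1000L_X\delta$ leaves ample room and the rest of Sections~\ref{s.5}--\ref{s.6} is insensitive to this), after which one demands $\sup_{\bM}|X|\cdot(2\tau^1/N)<\rho-\rho'-L_X\delta>0$. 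Two smaller points to make explicit: the pigeonhole should be applied to the weighted partition sums rather than to the number of triples, and since the selected index $i$ depends on $s$ and $r$, one should sum over the finitely many pairs $(k,i)$, integrate in $r$, let $s\to\infty$, and only then extract a single $q'$ (your final factor $1/\bigl(N(2[M_0]+1)\bigr)$ implicitly accounts for this, but it deserves a sentence); finally, for $q$ close to $2\tau^1+T_1$ the bridge length $T(q-2\tau^1)$ may fall below the tail-specification threshold assumed for $\cG^1$, so enlarge $T_1$ by $2M_0$ (or adjust the range of $q$) to cover that boundary case.
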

\begin{proof}
The proof is similar to the proof of~\cite[Lemma 4.17]{CT16} (which itself follows the same idea as~\cite[Lemma 4.16]{CT16}) and is thus omitted. 
We make the same modifications as in the proof of Lemma~\ref{l.gibbs}, using $\cG^1$ instead of $\cG^M$ when taking the $(u',{2\rho})$-separated sets $E'_{u'}$, and using Lemma~\ref{l.cor1} to obtain lower bounds on the partition sum over said $E'_{u'}$. The constant $Q'$ must inevitably depend on $M_0$. The lower bound of the range of $q'$ is changed to $q-2\tau^1-2M_0$ from $q-2\tau(M)$ in~\cite[Lemma 4.17]{CT16} because of  the time change $u\to u' = T(u)$ which differs from $u$ by an integer no more than $2M_0$.
\end{proof}


\section{Proof of Theorem~\ref{m.1}}\label{s.6}
In this section we will provide the proof of Theorem~\ref{m.1}. The proof is similar to~\cite{CT16} and consists of two steps: (1) every equilibrium state $\nu$ cannot be mutually singular with $\mu$, the equilibrium state constructed in the previous section, and (2) $\mu$ is ergodic. The proof of (1) relies on the observation that each equilibrium state is related to a orbit collection $\cC$ with large pressure (\cite[Lemma 4.18]{CT16}, see Lemma~\ref{l.6.1} below), and the fact that $\mu$ has the lower Gibbs property (Lemma~\ref{l.gibbs}). The proof of (2) uses Lemma~\ref{l.mixing} which can be seen as a form of mixing with respect to the Bowen balls.

Below we have to make several crucial modifications comparing to~\cite[Section 4]{CT16}: in Lemma~\ref{l.separate} below where we have to approximate the typical points of an equilibrium state $\nu$ with preimages of Bowen balls, whereas in~\cite{CT16} it is done using adapted partitions. This together with the lower Gibbs bound (Lemma~\ref{l.gibbs}) shows that all other equilibriums states must be absolutely continuous with respect to $\mu$. Finally we prove the ergodicity of  $\mu$  in Section~\ref{s.6.3}, which gives the uniqueness.

\subsection{Adapted partitions}\label{s.6.1}
First we recall the concept of the adapted partitions constructed by Bowen~\cite{B74}. 

\begin{definition}\label{d.adapted}
Let $\gamma>0$ and $E_t$ be a maximizing $(t,\gamma)$-separated set. A measurable partition $\cA_t$ of $\bM$ is called {\em adapted to $E_t$}, if for every $w\in \cA_t$  there is $x\in E_t$ with 
\begin{equation}\label{e.adapted}
B_t(x,\gamma/2)\subset w \subset \overline{B}_t(x,\gamma).
\end{equation}
\end{definition}
On the other hand, given each $x\in E_t$ there is a unique $w\in\cA_t$ for which~\eqref{e.adapted} holds. We denote it by $w_x$ to emphasis the dependence on $x$.

The following lemma is proven in~\cite{CT16}, which states that for every equilibrium state $\nu$, a positive $\nu$-measure set must have large pressure.
\begin{lemma}(see \cite[Lemma 4.18]{CT16})\label{l.6.1}
Let $\vep,\delta$ be as before and let $\gamma\in I_\delta$. For every $\alpha_3\in(0,1)$, there exists a constant $C_{\alpha_3}>0$ with the following property: let $\nu$ be any equilibrium state for the potential $\phi$, and let $\{E_t\}_{t>0}$ be a family of maximizing $(t,2\gamma)$-separated sets for $\Lambda(\bM,2\gamma,t)$ with adapted partitions $\cA_t$. Then for every $t>0$, if $E_t'\subset E_t$ satisfies $\nu\left(\bigcup_{x\in E_t'}w_x\right)\ge\alpha_3,$ then letting $\cC = \{(x,t):x\in E_t'\}$, we have
$$
\Lambda(\cC,2\gamma,2\gamma, t)\ge C_{\alpha_3} e^{tP(\phi)}.
$$  
\end{lemma}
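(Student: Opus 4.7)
The plan is to reproduce the entropy argument of \cite[Lemma 4.18]{CT16}, which follows Bowen's classical strategy. The result uses only that $\nu$ is an equilibrium state and that the system is almost expansive at scale $\vep\ge 2\gamma$; in particular it does not depend on the specification or Bowen property on $\cG$, so the proof transfers verbatim to the present setting.

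First I would realize $h_\nu(\varphi_1)$ by a single partition. Since $\varphi_t$ is almost expansive at scale $\vep$, by \cite{LVY} it is also $h$-expansive at scale $\vep$, and Proposition~\ref{p.stablize1} produces a finite measurable partition $\cA$ of $\bM$ with $\diam_{d_1}\cA<\vep$ for which $h_\nu(\varphi_1,\cA)=h_\nu(\varphi_1)$ for every $\nu\in\cM_X(\bM)$. Setting $\cA^{(t)}=\bigvee_{i=0}^{\lfloor t\rfloor-1}\varphi_{-i}\cA$, both $\cA^{(t)}$ and the adapted partition $\cA_t$ have $d_t$-diameter at most $\vep$, so they refine each other up to bounded multiplicity. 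This yields $H_\nu(\cA_t)\ge t\,h_\nu(\varphi_1)-O(1)$.

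Second, I would apply Jensen's inequality twice: once on $E_t'$ to get a lower bound, and once on $E_t\setminus E_t'$ to get a matching upper bound. Write $U_t=\bigcup_{x\in E_t'}w_x$, $S=\nu(U_t)\ge\alpha_3$, $H^{\rm in}=-\sum_{x\in E_t'}\tfrac{\nu(w_x)}{S}\log\tfrac{\nu(w_x)}{S}$, and $H^{\rm out}=-\sum_{w\notin\{w_x:x\in E_t'\}}\nu(w)\log\nu(w)$. The crucial point is that $w_x\subset \overline{B}_t(x,2\gamma)$ gives $\Phi_{2\gamma}(x,t)\ge\Phi_0(y,t)$ for every $y\in w_x$, so after normalizing weights Jensen produces
\[
\log\sum_{x\in E_t'}e^{\Phi_{2\gamma}(x,t)}\ \ge\ \frac{1}{S}\int_{U_t}\Phi_0(y,t)\,d\nu(y)\ +\ H^{\rm in}.
\]
This is precisely why the second scale $2\gamma$ appears in $\Lambda(\cC,2\gamma,2\gamma,t)$ rather than $\Phi_0$. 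The analogous estimate applied on $E_t\setminus E_t'$, combined with the upper bound $\Lambda(\bM,2\gamma,2\gamma,t)\le C_3 e^{tP(\phi)}$ from Lemma~\ref{l.4.11}, yields $\int_{\bM\setminus U_t}\Phi_0\,d\nu + H^{\rm out}\le (1-S)tP(\phi)+O(1)$.

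The main obstacle, that $\int_{\bM\setminus U_t}\Phi_0\,d\nu$ and $H^{\rm out}$ are individually of order $t$, is thus overcome by controlling them jointly rather than separately. Combining the above complementary bound with the equilibrium identity $\int\Phi_0\,d\nu+H_\nu(\cA_t)=tP(\phi)+O(1)$ and the entropy decomposition $H_\nu(\cA_t)=SH^{\rm in}+H^{\rm out}-S\log S-(1-S)\log(1-S)$, one obtains
\[
\int_{U_t}\Phi_0(y,t)\,d\nu(y)\ +\ SH^{\rm in}\ \ge\ StP(\phi)-O(1),
\]
so the right-hand side of the Jensen estimate on $E_t'$ satisfies $\tfrac{1}{S}\int_{U_t}\Phi_0\,d\nu+H^{\rm in}\ge tP(\phi)-O(1/\alpha_3)$. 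Exponentiating, and using that $E_t'\subset E_t$ is automatically $(t,2\gamma)$-separated, one concludes $\Lambda(\cC,2\gamma,2\gamma,t)\ge\sum_{x\in E_t'}e^{\Phi_{2\gamma}(x,t)}\ge C_{\alpha_3}\,e^{tP(\phi)}$ with the constant $C_{\alpha_3}$ depending only on $\alpha_3$, $\|\phi\|_\infty$, $C_3$, and the bounded entropy correction from the partition $\cA$.
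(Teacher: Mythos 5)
Your overall strategy (Jensen's inequality applied separately inside and outside $U_t$, combined with the variational identity and the upper bound $\Lambda(\bM,2\gamma,2\gamma,t)\le C_3e^{tP(\phi)}$ from Lemma~\ref{l.4.11}) is indeed the approach of \cite[Lemma~4.18]{CT16}, which the paper simply cites without reproducing a proof; and your observation that only almost expansivity and that upper bound are used is exactly the remark the paper makes. The bookkeeping with the conditional and unconditional entropies and the complementary bound is sound up to harmless $O(1)$ corrections, and the final passage from $\sum_{x\in E_t'}e^{\Phi_{2\gamma}(x,t)}$ to $\Lambda(\cC,2\gamma,2\gamma,t)$ is correct since $E_t'$ is itself $(t,2\gamma)$-separated.

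However, the justification of the crucial entropy inequality $H_\nu(\cA_t)\ge t\,h_\nu(\varphi_1)-O(1)$ is wrong as stated. You claim that $\cA^{(t)}$ and the adapted partition $\cA_t$ ``refine each other up to bounded multiplicity'' because both have small $d_t$-diameter. This is false in general: the number of $(t,2\gamma)$-separated points contained in a $d_t$-ball of radius $R>2\gamma$ can grow exponentially in $t$ (consider the full 2-shift, where a single cylinder in $\cA^{(t)}$ meets exponentially many elements of an adapted partition), so no $t$-uniform multiplicity bound holds, and hence no $O(1)$ bound on $H_\nu(\cA^{(t)}\mid\cA_t)$ can be extracted this way. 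The correct route avoids $\cA^{(t)}$ entirely: with $m=\lfloor t\rfloor$, the partition $\cA_t$ has $d_m$-diameter $\le 4\gamma<\vep$, and for any point $y$ with the same $\cA_t$-itinerary as $x$ under the time-$m$ map $\varphi_m$ one has $d(\varphi_s x,\varphi_s y)<\vep$ for all $s\in\RR$, i.e.\ $y\in\Gamma_\vep(x)$; since $h_{top}(\Gamma_\vep(x),\varphi_m)\le m\,h_{top}(\Gamma_\vep(x),\varphi_1)=0$ by $h$-expansivity, the Bowen argument underlying Proposition~\ref{p.stablize1}, applied to the map $\varphi_m$ rather than $\varphi_1$, gives $h_\nu(\varphi_m,\cA_t)=h_\nu(\varphi_m)=m\,h_\nu(\varphi_1)$, whence $H_\nu(\cA_t)\ge h_\nu(\varphi_m,\cA_t)=m\,h_\nu(\varphi_1)\ge t\,h_\nu(\varphi_1)-h_\nu(\varphi_1)$. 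With this replacement, the rest of your argument goes through.
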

The proof is omitted. We remark that the proof of this result in~\cite{CT16} only uses the assumption that $\nu$ is almost expansive at scale $\vep$ and the upper bound of  $\Lambda(\bM,2\gamma,2\gamma,t)$ given by Lemma~\ref{l.4.11}, and therefore can be applied to our setting.

\subsection{No mutually singular equilibrium states}\label{s.6.2}
Let us recall that $\mu$ is the equilibrium state constructed in Section~\ref{s.5.1}.  The goal of this section is to prove the following result.
\begin{proposition}\label{p.abscts}
Assume that the assumptions of Theorem~\ref{m.1} hold. Then there is no equilibrium state $\nu$ that is mutually singular with $\mu$.
\end{proposition}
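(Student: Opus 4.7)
The plan is to argue by contradiction. Suppose there is an equilibrium state $\nu$ with $\nu\perp\mu$. By mutual singularity, for each $\eta>0$ I may choose a Borel set $B_\eta$ with $\nu(B_\eta)>1-\eta$ and $\mu(B_\eta)<\eta$. The strategy is to construct, for each sufficiently large $t$, a set $V_t\subset B_\eta$ satisfying $\mu(V_t)\geq Q'$ for some fixed $Q'>0$ independent of $t$ and $\eta$. Once $\eta<Q'$ this yields the desired contradiction.

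For each large $t$ I would invoke the anticipated Lemma~\ref{l.separate}, which replaces the adapted-partition step of~\cite[Section~4]{CT16} by a direct approximation of $\nu$-typical points using preimages of Bowen balls. Its expected output is a subset $E_t'\subset E_t$ of a maximizing $(t,4\rho)$-separated set for $\Lambda(\bM,4\rho,t)$, together with integer times $t_p,t_s\in[0,[\overline M]]$ compatible with the output of Lemma~\ref{l.key} below, such that (i) $\nu\bigl(\bigcup_{x\in E_t'} w_x\bigr)\geq \tfrac12$, where $\{w_x\}$ is an adapted partition at scale $4\rho$; and (ii) writing $T(t)=t-t_p-t_s$ and $y_x=\varphi_{t_p}(x)$, the set $V_t := \bigsqcup_{x\in E_t'}\varphi_{-t_p}(B_{T(t)}(y_x,\rho))$ is contained in $B_\eta$. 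Property (i) and Lemma~\ref{l.6.1} with $\gamma=2\rho$, $\alpha_3=\tfrac12$ yield $\Lambda(\cC,4\rho,4\rho,t)\geq C_{1/2}\,e^{tP(\phi)}$ for $\cC:=\{(x,t):x\in E_t'\}$, and then Lemma~\ref{l.key} with $\gamma=2\rho$, $\alpha_1=C_{1/2}$, $\alpha_2=\tfrac12$ supplies the constant $\overline M=M(2\rho,C_{1/2})$, the times $t_p,t_s$, and the bound
\[ \Lambda\bigl(\varphi_{t_p,t_s}(\cC)\cap\cG^1,\,2\rho,\,6\rho,\,T(t)\bigr) \;\geq\; L\, C_{1/2}\, e^{tP(\phi)}. \]

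Next, extract a maximizing $(T(t),2\rho)$-separated subset $\{y_k\}\subset \varphi_{t_p,t_s}(\cC)\cap\cG^1_{T(t)}$, with $y_k=\varphi_{t_p}(x_k)$ for $x_k\in E_t'$. For $t$ large, $T(t)\geq T_2$, so Lemma~\ref{l.gibbs} applies to each $(y_k,T(t))\in\cG^1$ and gives $\mu(B_{T(t)}(y_k,\rho))\geq Q\,e^{-T(t)P(\phi)+\Phi_0(y_k,T(t))}$. Since the $y_k$ are $2\rho$-separated in $d_{T(t)}$, the balls $B_{T(t)}(y_k,\rho)$ are pairwise disjoint, and by bijectivity of $\varphi_{t_p}$ so are their preimages. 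Summing the Gibbs estimates, using the Bowen property on $\cG^1$ at scale $6\rho<\vep$ (distortion $K(1)$) to interchange $\Phi_0$ and $\Phi_{6\rho}$, and absorbing the factor $e^{(t-T(t))P(\phi)}$ (bounded since $|t-T(t)|\leq 2\overline M$), one obtains
\[ \mu(V_t) \;=\; \sum_k \mu\bigl(B_{T(t)}(y_k,\rho)\bigr) \;\geq\; Q' \]
for a constant $Q'>0$ depending only on $\overline M, L, Q, C_{1/2}, K(1)$ and $|P(\phi)|$, but not on $t$ or $\eta$. Combined with $V_t\subset B_\eta$ from (ii), this forces $\mu(B_\eta)\geq Q'$, contradicting $\mu(B_\eta)<\eta$ once $\eta<Q'$.

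The main obstacle is the construction underlying Lemma~\ref{l.separate}, and specifically property (ii): because the Gibbs bound of Lemma~\ref{l.gibbs} centers at $y_x=\varphi_{t_p}(x)$ rather than at $x$ itself, the $\mu$-mass one can extract necessarily lives inside $\varphi_{-t_p}(B_{T(t)}(y_x,\rho))$, and a naive Lipschitz estimate only places this preimage inside the $d$-ball of radius $L_X^{[\overline M]}\rho$ around $x$, which may poke far outside any tight neighborhood of $B_\eta$. Intersecting adapted partitions with $B_\eta$ as in~\cite{CT16} is therefore insufficient in our setting, and one must instead tailor the choice of $x\in E_t'$ directly to the shift $t_p$ dictated by Lemma~\ref{l.key}, approximating the $\nu$-typical points of $B_\eta$ via preimages of Bowen balls rather than via a fixed adapted partition.
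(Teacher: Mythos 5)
Your overall skeleton matches the paper's: get a large $\nu$-measure set, pass it through Lemma~\ref{l.6.1} to extract a $\cC$ with $\Lambda(\cC,4\rho,4\rho,t)\ge C_{1/2}e^{tP(\phi)}$, use Lemma~\ref{l.key} with $\alpha_1 = C_{1/2}$ to land in $\cG^1$ at the shifted time $T(t)$, and then invoke the Gibbs bound (Lemma~\ref{l.gibbs}) to force $\mu(V_t)\ge Q'$ with $Q'$ independent of the singularity parameter. That chaining is exactly what happens in the paper and is correct.

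The gap is in property~(ii) of your hypothesized Lemma~\ref{l.separate}. You ask that $V_t = \bigsqcup_{x\in E_t'}\varphi_{-t_p}(B_{T(t)}(\varphi_{t_p}x,\rho))$ be contained in a \emph{fixed} Borel set $B_\eta$ with $\mu(B_\eta)<\eta$. That containment cannot hold in general: once $\eta<Q'$ you are asking for a set with $\mu$-mass at least $Q'$ to sit inside a set with $\mu$-mass $<\eta<Q'$, which is precisely the contradiction you are trying to \emph{derive}, not an output you can arrange. The paper's Lemma~\ref{l.separate} is a genuinely weaker statement: it produces $t$-dependent compact sets $Q_t$ with $\nu(Q_t)\ge 1-\beta$, $\mu(Q_t)=0$, and it only controls the \emph{measure} of the dilated set
$$
\limsup_{t\to\infty}\,\mu\left(\bigcup_{y\in Q_t}\varphi_{-i}\bigl(B_{t-(i+j)}(\varphi_i y,\xi)\bigr)\right)\le\beta,
$$
uniformly over $0\le i,j\le M$. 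It does not assert set containment in a fixed $B_\eta$, and indeed the dilated sets move with $t$. The mechanism that makes this work is the one missing from your sketch: almost expansivity. Because $\Gamma_\vep(x)$ collapses onto a short orbit arc for $\nu$- and $\mu$-a.e.\ $x$, the two-sided Bowen ball $B_{[-t,t]}(x,\xi)$ has $\diam_{[-s(x),s(x)]}\to 0$, and after Egorov one finds a compact $K_1$ with uniform convergence. Setting $Q_t=\varphi_{-t/2}(K_1)$, the flow $\varphi_{t/2-i}$ sends each $B_{t-(i+j)}(\varphi_i y,\xi)$ inside a two-sided Bowen ball $B_{[-t_0,t_0]}(\varphi_{t/2}y,\xi)\subset K_1'$, and $K_1'$ is disjoint from a compact $K_2$ carrying most of $\mu$; invariance of $\mu$ then gives the $\limsup$ bound. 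Your final paragraph correctly flags the Lipschitz dilation as an obstacle, but the proposed fix (``approximating via preimages of Bowen balls rather than a fixed adapted partition'') does not resolve it without the almost-expansivity input, and the requirement $V_t\subset B_\eta$ would have to be replaced by the $\limsup$ measure bound for the argument to close.
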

The proof of this proposition requires the following approximation lemma, which improves~\cite[Proposition 3.10]{CT16} by taking into account the change of time  $t\to T(t)$.

\begin{lemma}\label{l.separate}
Assume that $\varphi_t$ is almost expansive at scale $\vep.$ Let $\nu_1,\nu_2$ be two invariant probability measures that are mutually singular to each other. Then for $\xi<\vep$ and every $\beta>0$, there exist compact sets $Q_t$ for all $t$ sufficiently large, with $\nu_1(Q_t)\ge 1-\beta$ and $\nu_2(Q_t) = 0$, such that for every $M>0$ and $0\le i,j \le M$, we have 
$$
\limsup_{t\to\infty}\,\,\nu_2\left(\bigcup_{y\in Q_t}\varphi_{-i}\left( B_{t-(i+j)}(\varphi_i(y),\xi)\right)\right)\le \beta.
$$
\end{lemma}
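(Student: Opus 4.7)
The plan is to reduce, via invariance of $\nu_2$ and a time-shift trick, the estimate on the displaced Bowen ball $\varphi_{-i}(B_{t-(i+j)}(\varphi_i(y),\xi))$ to an estimate on \emph{centered} two-sided Bowen balls around a fixed compact set; the two-sided almost expansivity at scale $\vep>\xi$ can then be brought to bear directly.

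First I would use mutual singularity together with the ergodic theorem to produce disjoint $\varphi$-invariant Borel sets $A_1,A_2\subset\bM$ with $\nu_i(A_i)=1$. Next, applying almost expansivity to $\nu_1$, the set $\Exp_\vep(X)$ decomposes as $\bigcup_{N\in\NN}S_N$, where
\[
S_N:=\{y:\Gamma_\vep(y)\subset\varphi_{[-N,N]}(y)\},
\]
and has full $\nu_1$-measure; fix $N$ large enough that $\nu_1(A_1\cap S_N)>1-\beta/2$ and use inner regularity to choose a compact $K\subset A_1\cap S_N$ with $\nu_1(K)\ge 1-\beta$. Note that $\nu_2(K)=0$ since $K\subset A_1$ and $A_1\cap A_2=\emptyset$. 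Now set
\[
Q_t:=\varphi_{-t/2}(K),
\]
which is compact and, by the invariance of $\nu_1$ and $\nu_2$, satisfies $\nu_1(Q_t)\ge 1-\beta$ and $\nu_2(Q_t)=0$.

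The key reduction is a direct substitution: writing $y=\varphi_{-t/2}(w)$ for $y\in Q_t$ and setting $z':=\varphi_{t/2}(z)$, the condition $z\in\varphi_{-i}(B_{t-(i+j)}(\varphi_i(y),\xi))$ is equivalent to $d(\varphi_s(z'),\varphi_s(w))<\xi$ for all $s\in[i-t/2,\,t/2-j]$. Consequently, by the invariance of $\nu_2$,
\[
\nu_2\!\left(\bigcup_{y\in Q_t}\varphi_{-i}(B_{t-(i+j)}(\varphi_i(y),\xi))\right)=\nu_2(F_t),
\]
where $F_t$ is the union over $w\in K$ of the closed centered Bowen balls $\{z':d(\varphi_sz',\varphi_sw)\le\xi,\ \forall s\in[i-t/2,\,t/2-j]\}$. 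By compactness of $K$ and continuity of the flow each $F_t$ is closed, and the family is \emph{decreasing} in $t$ since the defining time interval enlarges. Extracting a convergent subsequence of the witnessing points $w_t\in K$ shows that any $z'\in\bigcap_t F_t$ satisfies $d(\varphi_sz',\varphi_sw_\infty)\le\xi$ for \emph{all} $s\in\RR$; hence $z'\in\Gamma_\vep(w_\infty)\subset\varphi_{[-N,N]}(w_\infty)\subset\varphi_{[-N,N]}(K)\subset A_1$, so $\nu_2(\bigcap_tF_t)=0$. Continuity of $\nu_2$ from above then yields $\nu_2(F_t)\downarrow 0$ as $t\to\infty$, which establishes the lemma (in fact with $0$ in place of $\beta$) for every fixed $i,j\ge 0$, and therefore for all $0\le i,j\le M$ and every $M>0$.

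The main obstacle I anticipate is the conceptual one of realizing that the obvious choice $Q_t=K$ does \emph{not} work: one-sided Bowen balls $B_t(y,\xi)$ shrink only to the forward infinite Bowen ball $\bigcap_t\overline{B}_t(y,\xi)$, which need not be contained in $\varphi_{[-N,N]}(y)$, since the almost expansivity hypothesis only controls the two-sided infinite Bowen balls $\Gamma_\vep(y)$. The shift by $-t/2$ in the definition of $Q_t$ is precisely what converts the one-sided Bowen ball of length $t-(i+j)$ into a centered Bowen ball whose defining interval grows to all of $\RR$, thereby allowing $\Gamma_\vep$ (and hence almost expansivity) to enter. Once this point is in place the rest is largely routine measure-theoretic continuity, and the mild perturbation of the two endpoints by $i$ and $j$ is absorbed harmlessly since the interval $[i-t/2,\,t/2-j]$ still exhausts $\RR$ as $t\to\infty$ for each fixed $i,j$.
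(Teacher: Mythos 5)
Your proof is correct, and it shares the paper's central device---defining $Q_t=\varphi_{-t/2}(K)$ to convert forward Bowen balls into two-sided ones---but reaches the conclusion by a genuinely different route. The paper applies Egorov's theorem to get uniform shrinking of $\diam_{[-s_0,s_0]}B_{[-t,t]}(x,\xi)$ on a compact set $K_1\subset A_1$, chooses a second compact set $K_2\subset A_2$ carrying most of $\nu_2$, establishes a positive separation distance $a$ between the flow-thickened sets $K_1^{s_0},K_2^{s_0}$, and then shows the $\xi$-enlargement $K_1'=\bigcup_{x\in K_1}B_{[-t_0,t_0]}(x,\xi)$ stays away from $K_2$, forcing $\nu_2(K_1')<\beta$. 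You instead skip both Egorov and the explicit separation estimate: you take any compact $K\subset A_1\cap S_N$ of $\nu_1$-measure $\ge 1-\beta$, observe that the relevant sets $F_t$ form a decreasing family of \emph{closed} sets (closedness via compactness of $K$ and continuity of the flow), identify $\bigcap_tF_t$ inside $\varphi_{[-N,N]}(K)\subset A_1$ via a subsequence of witness points, and invoke continuity of $\nu_2$ from above. This is somewhat more elementary---no uniform-convergence machinery, no auxiliary compact set $K_2$---and it even yields the slightly stronger conclusion that for each fixed $i,j$ the limit is $0$, not merely $\le\beta$. Two small things you should make explicit in a written-up version: (i) the passage from the original open-ball union to the closed-ball union $F_t$ is only an inclusion, which is all you need since you want an upper bound; and (ii) the monotonicity and continuity-from-above argument should be run for $t$ large enough that the defining interval $[i-t/2,\,t/2-j]$ is nonempty, i.e.\ $t>i+j$, but this is automatic once $t$ is large compared with the fixed bound $M$.
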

\begin{proof}
For any set $w\in \bM$ and $s\ge 0$, let
$$
\diam_{[-s,s]}w = \sup_{x,y\in w}\inf\left\{d(\varphi_{t_1}(x), \varphi_{t_2}(y)): t_1,t_2\in[-s,s]\right\},
$$
and note that if $s'>s$ then $\diam_{[-s',s']}w\le \diam_{[-s,s]}w $. 
Since $\varphi_t$ is  almost expansive at scale $\vep$, it follows that $\Gamma_\vep(x)$, the two-sided infinite Bowen ball at $x$ with scale $\vep$, belongs to a orbit segment $\varphi_{[-s,s]}(x)$ for $\nu_1,\nu_2$ almost every $x$ and some $s=s(x)>0$. Writing 
$$
B_{[-s,s]}(x,\xi) = \{y\in\bM: d_{2s}(\varphi_{-s}(y),\varphi_{-s}(x))< \xi\}
$$
for the two-sided Bowen ball at $x$, we then have, for $\nu_1,\nu_2$ almost every $x$,
\begin{equation}\label{e.6.3.a}
\diam_{[-s(x),s(x)]}B_{[-t,t]} (x,\xi)\xrightarrow{t\to\infty} 0 
\end{equation}
for some $s(x)>0$.

Since $\nu_1,\nu_2$ are mutually singular, there exists disjoint invariant sets $Q^1,Q^2$ with $\nu_i(Q^j)=\delta_{ij}$ where $\delta_{ij}$ is the Kronecker delta. By Egorov's theorem we can take disjoint compact sets $K_1$ and $K_2$ with $K_i\subset Q^i$, such that
\begin{itemize}
\item $\nu_1(K_1)>1-\beta/2$ and $\nu_2(K_1)=0$;
\item $\nu_2(K_2)>1-\beta/2$ and $\nu_1(K_2)=0$;
\item there exists $s_0>0$ such that on $K_1$ we have $s(x)<s_0$, and the convergence in~\eqref{e.6.3.a} is uniform.
\end{itemize}

Write $K_i^{s_0} = \bigcup_{t\in[-s_0,s_0]}\varphi_t(K_i), i=1,2$ and note that $K_1^{s_0},K_2^{s_0}$ are compact and disjoint since they each belongs to $Q^i$. Consequently, there exists $a>0$ such that $K_1^{s_0},K_2^{s_0}$ are separated by $a$; that is,
\begin{equation}\label{e.aa}
d(\varphi_{t_1}(x), \varphi_{t_2}(y))\ge a, \mbox{ for all }x\in K_1,y\in K_2, t_1,t_2\in[-s_0,s_0].
\end{equation}

Since the convergence of~\eqref{e.6.3.a} is uniform on $K_1$, we obtain $t_0>0$ such that for any $t\ge t_0$ and $x\in K_1$,
$$
\diam_{[-s_0,s_0]}B_{[-t,t]}(x,\xi)< a/2.
$$
Thus, the set $K_1':=\bigcup_{x\in K_1} B_{[-t_0,t_0]}(x,\xi)$ is disjoint with $K_2$, since any point of intersection $y\in K_2\cap B_{[-t_0,t_0]}(x,\xi)$ with  $x\in K_1$ will clearly violate~\eqref{e.aa}.
Consequently, we have $\nu_2(K_1') <\beta$.

For any $t>0$, let $Q_t=\varphi_{-t/2}(K_1)$. Below we will prove that $Q_t$ satisfies the desired property.

 Because both $\nu_1$ and $\nu_2$ are
invariant probabilities, the first two properties of $Q_t$ are clearly satisfied, that is, $\nu_1(Q_t)\geq 1-\beta$ and $\nu_2(Q_t)=0$. Next, observe that for any $M\geq 0$, $t>2(t_0+M)$ and $0\le i,j\le M$, we have for any $y\in Q_t$
\begin{equation}\label{e.6.3.b}
\varphi_{t/2-i}(B_{t-(i+j)}(\varphi_i(y),\xi))\subset B_{[-t_0,t_0]}(\varphi_{t/2}(y),\xi) \subset K_1'.
\end{equation}

Writing 
$$
Q_t' = \bigcup_{y\in Q_t}\varphi_{-i}\left( B_{t-(i+j)}(\varphi_i(y),\xi)\right), 
$$
by~\eqref{e.6.3.b} and the invariance of $\nu_2$, we have
$$
\nu_2(Q_t') = \nu_2(\varphi_{t/2} (Q_t'))\le \nu_2 \left(\bigcup_{y\in Q_t}\varphi_{t/2-i}(B_{t-(i+j)}(\varphi_i(y),\xi))\right)\le \nu_2(K_1') <\beta.
$$
The proof is complete.
\end{proof}

Now we are ready to prove Proposition~\ref{p.abscts}.
\begin{proof}
We prove by contradiction. Let $\nu$ be any equilibrium state that is mutually singular with $\mu$.   Recall from Section~\ref{s.2'} the prescribed parameters: $\vep = 1000L_X\delta, \rho = 11L_X\delta$.

Let $\beta\in(0,\frac12)$ be an arbitrary constant whose choice will be specified later. We apply Lemma~\ref{l.separate} with $\nu_1= \nu$, $\nu_2 = \mu$,
 $\xi =2\rho$ and $\beta$ to obtained the compact sets $Q_t$ for all $t$ sufficiently large. 

Let $E_t$ be a maximizing $(t,4\rho)$-separated set of $\bM$, and $\cA_t$ an adapted partition. Define
$$
U_t : = \bigcup_{\substack{w\in\cA_t\\w\cap Q_t\ne\emptyset}} w.
$$
Then we have $\nu(U_t)\ge \nu(Q_t) \ge 1-\beta> \frac12$ for all $t$ sufficiently large. This allows us to apply Lemma~\ref{l.6.1} with $\gamma=2\rho$ and $\alpha_3=\frac12$ to obtain $C_\frac12$ for which 
$$
\Lambda(\cC,4\rho,4\rho, t)\ge C_\frac12 e^{tP(\phi)};
$$
here $\cC = \{(x,t):x\in E_t\cap U_t\} $. Then for each  $t$, $\cC_t = \{x:(x,t)\in\cC\}$ is a finite set. Also note that $C_\frac12$ depends on $\delta,\rho,\vep$ but not on $\beta$. 

Now we are in a position to apply Lemma~\ref{l.key} again with  $\gamma=2\rho$ and $\alpha_1 = C_{\frac12}$, which gives $\overline L,\overline M,\overline T_1$ such that for $t\ge \overline T_1$, there exists $t_p,t_s \in [0,[\overline M]]\cap \NN$, such that for $T(t) = t-t_p-t_s,$ 
\begin{equation}\label{e.6.4.1}
\Lambda\left(\varphi_{t_p,t_s}(\cC)\cap \cG^1,2\rho,6\rho,T(t)\right)\ge \overline L \cdot \Lambda(\cC, 4\rho,4\rho,t)\ge \overline LC_\frac12 e^{tP(\phi)}.
\end{equation}
Recall that our parameters $\rho,\delta,\vep$ have been fixed throughout (see Section~\ref{s.2'}). This means that the constants $\overline L,\overline M,\overline T_1, C_\frac12$ are also fixed and do not depend on $\beta$.

We take $E_t' = \{x\in E_t\cap U_t: (\varphi_{t_p}(x), T(t))\in \cG^1\}$, 
and note that 
$$\left(\varphi_{t_p,t_s}(\cC)\cap \cG^1\right)_{T(t)} = \{\varphi_{t_p}(x):x\in E_t'\}$$
is a finite set; however, it may not be $(T(t),2\rho)$-separated (despite $E_t$ being $(t,4\rho)$-separated). To deal with this, we take a maximizing $(T(t),2\rho)$-separated subset of $\left(\varphi_{t_p,t_s}(\cC)\cap \cG^1\right)_{T(t)}$, which we denote by $\tilde E'_t$, and define
$$
E_t'' = \varphi_{-t_p}(\tilde E'_t)\subset E_t'.
$$

Now \eqref{e.6.4.1} means that 
$$
\sum_{x\in E_t''} \exp\left(\Phi_{6\rho}(\varphi_{t_p}(x), T(t))\right)\ge \overline LC_\frac12 e^{tP(\phi)}.
$$
By the Bowen property on $\cG^1$ at scale $\vep > 6\rho$, we obtain
$$
\sum_{x\in E_t''} \exp\left(\Phi_{0}(\varphi_{t_p}(x), T(t))\right)\ge \overline LC_\frac12 e^{-K(1)}\cdot e^{tP(\phi)}.
$$

Now we put 
$$
V_t'' =\bigcup_{y\in E_t''} B_{T(t)}(\varphi_{t_p}(y),\rho),
$$
and remark that it is a disjoint union due to $\varphi_{t_p}(E_t'')$ being $(T(t),2\rho)$-separated\footnote{This explains the choice of $\gamma = 2\rho$ earlier.}. By the lower Gibbs property on $\cG^1$ in Lemma~\ref{l.gibbs} at scale $\rho$, it follows that
\begin{align*}
\mu(V_t'') =& \sum_{y\in E^\prime_t} \mu \left(B_{T(t)}(\varphi_{t_p}(y),\rho)\right)\\
\ge & \, Qe^{-T(t)P(\phi)}\sum_{x\in E_t'} \exp\left(\Phi_{0}(\varphi_{t_p}(x), T(t))\right)\\
\numberthis\label{e.6.4.2}\ge & \, Q\overline LC_\frac12e^{-K(1)} \cdot e^{(t-T(t))P(\phi)}.
\end{align*}

Since $t-T(t) \in [0, 2\overline M]$ we see that $\liminf\limits_t\mu(V_t'')>0$ and is independent of $\beta$. 

On the other hand, since $E_t''\subset E_t\cap U_t$, by Lemma~\ref{l.6.1} we have 
$$
\mu(V_t'')=\mu(\varphi_{-t_p}(V_t'')) \le \mu\left(\bigcup_{y\in Q_t}\varphi_{-t_p}\left( B_{T(t)}(\varphi_{t_p}(y),2\rho)\right) \right),
$$
which gives 
\begin{equation}\label{e.6.4.3}
\limsup_t\mu(V_t'')\le \beta
\end{equation}
by Lemma~\ref{l.separate}. Since $\beta$ is arbitrary and is independent of all the constants involved in~\eqref{e.6.4.2}, we can take 
$$
\beta< \min\left\{ Q\overline LC_\frac12e^{-K(1)} \cdot e^{kP(\phi)}:k=0,1,\ldots,2([\overline M]+1)\right\},
$$
causing~\eqref{e.6.4.3} to contradict with~\eqref{e.6.4.2}. This finishes the proof of Proposition~\ref{p.abscts}.
\end{proof}

\subsection{Ergodicity of $\mu$}\label{s.6.3}
To prove Theorem~\ref{m.1}, it only remains to show that $\mu$ is ergodic. 

\begin{proposition}\label{p.ergodic}
The equilibrium state $\mu$ constructed in Section~\ref{s.5.1} is ergodic.
\end{proposition}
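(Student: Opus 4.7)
The argument proceeds by contradiction, paralleling Proposition~\ref{p.abscts} but with the singularity transferred from the pair $(\nu,\mu)$ to two components of $\mu$ itself. Suppose $\mu$ is not ergodic. By the ergodic decomposition theorem, $\mu = a_1\mu_1 + a_2\mu_2$ for mutually singular $\varphi_t$-invariant probability measures $\mu_1,\mu_2$ with weights $a_1,a_2>0$, and affinity of $\nu \mapsto h_\nu(\varphi_1) + \int\phi\,d\nu$ forces both $\mu_i$ to be equilibrium states. Set
\begin{equation}\label{e.a}
a := \min(a_1,a_2) > 0,
\end{equation}
the constant flagged at the end of Section~\ref{s.2'} that determines $\widehat M = M(2\rho, C_{a/2})$.

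Fix a small $\beta>0$ (to be chosen last). Apply Lemma~\ref{l.separate} twice, with $(\nu_1,\nu_2) = (\mu_i,\mu_{3-i})$ for $i=1,2$, at scale $\xi = 9\rho < \vep$, obtaining compact sets $Q_t^i$ with $\mu_i(Q_t^i)\geq 1-\beta$, $\mu_{3-i}(Q_t^i)=0$, and the associated limsup bounds. Take a maximizing $(t,4\rho)$-separated set $E_t$ with adapted partition $\cA_t$ and set $U_t^i = \bigcup\{w \in \cA_t : w\cap Q_t^i\neq\emptyset\}$; then $\mu(U_t^i)\geq a_i(1-\beta)\geq a/2$ for $\beta<1/2$. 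Lemma~\ref{l.6.1} (with $\nu=\mu$, $\gamma = 2\rho$, $\alpha_3 = a/2$) followed by Lemma~\ref{l.key} (with $\alpha_1 = C_{a/2}$) yields $\widehat M$, integers $t_p^i,t_s^i \in [0,[\widehat M]]\cap\NN$, and subsets $E_t^{i\prime}$ such that $\varphi_{t_p^i}(E_t^{i\prime}) \subset \cG^1_{T^i(t)}$ is $(T^i(t),2\rho)$-separated (with $T^i(t)=t-t_p^i-t_s^i$), and, after invoking the Bowen property on $\cG^1$ at scale $\vep>6\rho$,
\[
\sum_{y\in E_t^{i\prime}} e^{\Phi_0(\varphi_{t_p^i}(y),T^i(t))} \geq c_\ast\, e^{tP(\phi)}
\]
for a constant $c_\ast>0$ independent of $\beta$.

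Define $V_t^i := \bigcup_{y\in E_t^{i\prime}} B_{T^i(t)}(\varphi_{t_p^i}(y),\rho)$, which is a disjoint union by separation. For each pair $(y_1,y_2)\in E_t^{1\prime}\times E_t^{2\prime}$, Lemma~\ref{l.mixing} applied to $(\varphi_{t_p^1}(y_1),T^1(t))$ and $(\varphi_{t_p^2}(y_2),T^2(t))$ produces a gluing time $q'=q'(y_1,y_2)$ taking at most $P:=(2[M_0]+1)(N+1)$ distinct values. A two-step pigeonhole --- first partition $E_t^{2\prime}$ by $q'(y_1,\cdot)$ to extract a weighted-mass-$\geq 1/P$ subset with common value $q^\ast(y_1)$, then partition $E_t^{1\prime}$ by $q^\ast(\cdot)$ to extract a weighted-mass-$\geq 1/P$ subset with common value $q^{\ast\ast}$ --- isolates a family of pairs whose total weighted mass is at least $c_\ast^2P^{-2}e^{2tP(\phi)}$. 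Summing the mixing lower bounds from Lemma~\ref{l.mixing} over this family, and using the disjointness of the sets $\varphi_{-(T^1(t)+q^{\ast\ast})} B_{T^2(t)}(\varphi_{t_p^2}(y_2),\rho)$ inside each $B_{T^1(t)}(\varphi_{t_p^1}(y_1),\rho)$, one obtains a lower bound
\[
\mu\bigl(V_t^1 \cap \varphi_{-(T^1(t)+q^{\ast\ast})}(V_t^2)\bigr) \geq c_{\ast\ast} > 0,
\]
with $c_{\ast\ast}$ depending only on fixed data, and in particular not on $\beta$.

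For the matching upper bound, each $y \in E_t^{i\prime}$ lies in a partition element $w$ meeting $Q_t^i$, and the $(t,4\rho)$-adapted structure forces $d_t(y,y')\leq 8\rho$ for some $y'\in Q_t^i$, hence $d_{T^i(t)}(\varphi_{t_p^i}(y),\varphi_{t_p^i}(y'))\leq 8\rho$ and
\[
V_t^i \subset \bigcup_{y'\in Q_t^i}B_{T^i(t)}(\varphi_{t_p^i}(y'),9\rho).
\]
Lemma~\ref{l.separate} at scale $9\rho$ applied with $i=t_p^i,\, j=t_s^i\leq \widehat M$, combined with $\varphi_{t_p^i}$-invariance of $\mu_{3-i}$, then gives $\mu_{3-i}(V_t^i)\leq \beta+o(1)$. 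Since $\mu_1,\mu_2$ are invariant under every $\varphi_s$,
\[
\mu\bigl(V_t^1 \cap \varphi_{-s}(V_t^2)\bigr) \leq a_1\mu_1(V_t^2) + a_2\mu_2(V_t^1) \leq \beta + o(1),
\]
and choosing $\beta < c_{\ast\ast}$ contradicts the lower bound. The main technical obstacle is the two-step pigeonhole over $q'(y_1,y_2)$: because the gluing time from Lemma~\ref{l.mixing} depends on both orbit segments, no direct product estimate is available, but the finite parametrization of $q'$ limits the loss to the fixed factor $P^{-2}$, which is absorbed into $c_{\ast\ast}$.
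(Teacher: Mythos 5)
Your proof is correct and follows essentially the same strategy as the paper: construct $(T^i(t),2\rho)$-separated families in $\cG^1$ concentrated on each singular component via Lemmas~\ref{l.separate}, \ref{l.6.1}, and \ref{l.key}, derive a $\beta$-independent lower bound on $\mu(V_t^1\cap\varphi_{-s}V_t^2)$ from the mixing Lemma~\ref{l.mixing}, and contradict the $O(\beta)$ upper bound coming from the separation lemma. The differences are cosmetic --- you invoke the ergodic decomposition where the paper takes two arbitrary positive-measure invariant sets $P,P'$ with $\mu(P\cap P')=0$, you run a two-stage pigeonhole over the pair-dependent gluing time $q'(y_1,y_2)$ where the paper sums the mixing estimate over all finitely many $(k,i)$ and pigeonholes once, and you apply Lemma~\ref{l.separate} at the safer scale $\xi=9\rho$ to absorb the adapted-partition diameter --- but each is interchangeable with the paper's version and leads to the same contradiction.
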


\begin{proof}
The proof is similar to~\cite[Proposition 4.19]{CT16}, with Lemma~\ref{l.mixing} replacing~\cite[Lemma 4.17]{CT16}.

We take two measurable invariant sets $P,P'$ with positive $\mu$ measure. We will prove that 
$$
\mu(P\cap P')>0,
$$
which immediately leads to the ergodicity of $\mu$.

We prove this statement by contradiction. Assume that $P,P'$ are invariant, have positive $\mu$ measures and $\mu(P\cap P')=0$.  We then consider the conditional probability measures $\mu_{P}$ and $\mu_{P'}$, and note that they are invariant and mutually singular. 

Let $\beta\in(0,\frac12)$ be an arbitrary constant which will be determined later, we apply Lemma~\ref{l.separate} twice (with $\xi = 2\rho$ and $\beta$ as in the previous proposition) to obtain compact sets $Q_t^P$ and $Q_t^{P'}$ for all $t$ large enough, with the following properties:
\begin{itemize}
\item $\mu_P(Q_t^P)>1-\beta$ and for every $M>0$ and $0\le i,j \le M$, 
$$
\limsup_{t\to\infty}\,\,\mu_{P'}\left(\bigcup_{y\in Q_t^P}\varphi_{-i}\left( B_{t-(i+j)}(\varphi_i(y),2\rho)\right)\right)\le \beta;
$$
\item  $\mu_{P'}(Q_t^{P'})>1-\beta$ and for every $M>0$ and $0\le i,j \le M$, 
$$
\limsup_{t\to\infty}\,\,\mu_{P}\left(\bigcup_{y\in Q_t^{P'}}\varphi_{-i}\left( B_{t-(i+j)}(\varphi_i(y),2\rho)\right)\right)\le \beta.
$$ 
\end{itemize}
We take $E_t$ a maximizing $(t,4\rho)$-separated set of $\bM$ and $\cA_t$ an adapted partition. Define, as before, the approximation of $P$ and $P'$ with elements of $\cA_t$:
$$
U_t^P : = \bigcup_{\substack{w\in\cA_t\\w\cap Q_t^P\ne\emptyset}} w,\hspace{1cm}U_t^{P'} : = \bigcup_{\substack{w\in\cA_t\\w\cap Q_t^{P'}\ne\emptyset}} w.
$$
Then $\mu_P(U_t^P)\ge \frac12$ and $\mu_{P'}(U_t^{P'})\ge \frac12$. Writing 
\begin{equation}\label{e.a}
a = \min \{\mu(P), \mu(P')\},
\end{equation}
we see that $\min\left\{\mu(U_t^P), \mu(U_t^{P'})\right\}\ge\frac12 a$. 
Applying Lemma~\ref{l.gibbs} with $\gamma = 2\rho$ and $\alpha_3 = \frac12a$, we obtain $C_{\frac12a}$, such that 
\begin{equation}\label{e.6.5.1}
\Lambda(\cC^P,4\rho,4\rho, t)\ge C_{\frac12a} e^{tP(\phi)}, \mbox{ and }\Lambda(\cC^{P'},4\rho,4\rho, t)\ge C_{\frac12a} e^{tP(\phi)},
\end{equation}
where $\cC^P = \{(x,t):x\in E_t\cap U_t^P\}$, and $\cC^{P'} $ is defined similarly.

Now we  apply Lemma~\ref{l.key} with $\gamma = 2\rho$ and $\alpha_1 = C_{\frac12a}$, which gives $\widehat L,\widehat M,\widehat T_1$ depending on $\rho$ and $C_{\frac12a}$ but not on $\beta$, such that for $t\ge\widehat T_1$, there exists integers $t_p,t_s,t_p',t_s'\in [0,[\widehat M]]\cap \NN$, such that for $T^P(t) = t-t_p-t_s$ and $T^{P'}(t) = t-t_p'-t_s'$,
\begin{equation}\label{e.6.5.2}
\Lambda\left(\varphi_{t_p,t_s}(\cC^P)\cap \cG^1,2\rho,6\rho,T^P(t)\right)\ge \widehat L  \Lambda(\cC^P, 4\rho,4\rho,t)\ge \widehat LC_{\frac12a} e^{tP(\phi)},\mbox{ and }
\end{equation}
\begin{equation}\label{e.6.5.3}
\Lambda\left(\varphi_{t_p',t_s'}(\cC^{P'})\cap \cG^1,2\rho,6\rho,T^{P'}(t)\right)\ge \widehat L  \Lambda(\cC^{P'}, 4\rho,4\rho,t)\ge \widehat LC_{\frac12a} e^{tP(\phi)}.
\end{equation}

We take $E_t^P = \{x\in E_t\cap U_t^P: (\varphi_{t_p}(x), T^P(t))\in\cG^1\}$ and $E_t^{P'} = \{x\in E_t\cap U_t^{P'}: (\varphi_{t'_p}(x), T^{P'}(t))\in\cG^1\}$ respectively. Take $\tilde E_t^P$ a maximizing $(T^P(t),2\rho)$-separated subset of $\varphi_{t_p}(E_t^P)$, and let 
$$
\overline E_t^P = \varphi_{-t_p}(\tilde E_t^P)\subset E_t^P.
$$
We also define $\overline E_t^{P'} $ in a similar way.
Then by~\eqref{e.6.5.2} and~\eqref{e.6.5.3} and the Bowen property on $\cG^1$, we have
\begin{equation}\label{e.6.5.4}
\begin{split}
\sum_{x\in \overline E_t^P} \exp\left(\Phi_{0}(\varphi_{t_p}(x), T^P(t))\right)\ge \widehat LC_{\frac12a}e^{-K(1)} e^{tP(\phi)},\\
\sum_{x\in \overline E_t^{P'}} \exp\left(\Phi_{0}(\varphi_{t_p'}(x), T^{P'}(t))\right)\ge \widehat LC_{\frac12a}e^{-K(1)} e^{tP(\phi)}.
\end{split}
\end{equation}

Finally we define, as before,
$$
V^P_t =\bigcup_{y\in \overline E^P_t} B_{T^P(t)}(\varphi_{t_p}(y),\rho),\,\,\,\,\,V^{P'}_t =\bigcup_{y\in \overline E^{P'}_t} B_{T^{P'}(t)}(\varphi_{t_p'}(y),\rho),
$$ 
and note that both are disjoint unions due to $\varphi_{t_p}(\overline E^P_t) = \tilde E_t^P$, $\varphi_{t'_p}(\overline E^{P'}_t) = \tilde E_t^{P'}$   being $(T^P(t),2\rho)$ and $(T^{P'}(t),2\rho)$-separated, respectively. 

To simplify notation,  for $x\in \overline E_t^P$ and $y\in \overline E_T^{P'}$ we write $x' = \varphi_{t_p}(x)$, $t_1 = T^P(t)$, $y' = \varphi_{t_p'}(y)$, $t_2 = T^{P'}(t)$.
Then $(x',t_1),(y',t_2)\in\cG^1$. Fix any $q>2\tau^1+\widehat T_1$. By Lemma~\ref{l.mixing}, there exists $N>0$ such that  for any $x\in E_t^P$, $y\in E_t^{P'}$, there exists $q' = q-k-\frac{2i}{N}\tau^1$ for some $k \in \{0,1,\ldots,2[\widehat M]\}$ and $i\in\{0,\ldots,N\}$ such that 
\begin{equation}\label{e.6.5.5}
\begin{split}
\mu(B_{t_1}(x',\rho)\cap \varphi_{-(t_1+q')} B_{t_2}(y',\rho))\ge Q'e^{-(t_1+t_2)P(\phi) + \Phi_0(x',t_1)+\Phi_0(y',t_2)}
\end{split}
\end{equation}
for some constant $Q'>0$ independent of $\beta$.

Note that the left-hand side of~\eqref{e.6.5.5} belongs to $\varphi_{t_p}(V_t^P)\cap \varphi_{-(t_1+q')+t_p'} V_t^{P'}$ with $q'$ (but not $t_1$, $t_p$ and $t_p'$) depending on $x$ and $y$. Summing over $x\in E_t^P$ and $y\in E_t^{P'}$, we obtain from~\eqref{e.6.5.5} and~\eqref{e.6.5.4}
\begin{align*}
&\sum_{\substack{k \in \{0,1,\ldots,2[\widehat M]\}\\i\in\{0,\ldots,N\}}} \mu(\varphi_{t_p}(V_t^P)\cap \varphi_{-(t_1+q-k-\frac{2i}{N}\tau^1)+t_p'} V_t^{P'})\\
\ge &\sum_{\substack{k \in \{0,1,\ldots,2[\widehat M]\}\\i\in\{0,\ldots,N\}\\x\in\overline E_t^P,\,y\in\overline E_t^{P'}}} \mu(B_{t_1}(x',\rho)\cap \varphi_{-(t_1+q-k-\frac{2i}{N}\tau^1)} B_{t_2}(y',\rho))\\
\ge &\sum_{x\in\overline E_t^P,\,y\in\overline E_t^{P'}} Q'e^{-(t_1+t_2)P(\phi) + \Phi_0(x',t_1)+\Phi_0(y',t_2)}\\
\ge &\, Q \left(\widehat LC_{\frac12a}e^{-K(1)}\right)^2e^{(2t-(t_1+t_2))P(\phi)}.
\end{align*}
Noting that $t_1 = T^P(t), t_2 = T^{P'}(t)$ only differ from $t$ by at most $2\widehat M$, for every $t>0$ sufficiently large, there exists $q'' \in [q-2\tau^1-2\widehat M,q+\widehat M] $ such that 
\begin{align*}\numberthis\label{e.6.5.6}
 \mu(V_t^P\cap \varphi_{-(t+q'')}V_t^{P'}) \ge&\, \frac{1}{2(\widehat M+1)(N+1)} Q \left(\widehat LC_{\frac12a}e^{-K(1)}\right)^2e^{(2t-(t_1+t_2))P(\phi)}\\
 \ge&\, c>0
\end{align*}
with $c$ being some constant independent of $\beta$.

On the other hand, writing $s = t+q''$, we have
$$
(V^P_t\cap \varphi_{-s} V_t^{P'})\setminus  (P\cap \varphi_{-s} P')\subset (V_t^P\setminus P)\cup \varphi_{-s} (V_t^{P'}\setminus P')
$$
By the construction of $V_t^P$ and $V_t^{P'}$, we have $\mu(V_t^P\setminus P) < 2\beta$ and the same holds for $V_t^{P'}\setminus P'$. This means that 
$$
\mu(P\cap P') = \mu(P\cap \varphi_{-s} P') \ge \mu(V^P_t\cap \varphi_{-s} V_t^{P'}) - 4\beta \ge c-4\beta.
$$
Taking $\beta$ small enough causes $\mu(P\cap P')  > 0$, contradicting with the assumption that $\mu(P\cap P')=0$. The proof of Proposition~\ref{p.ergodic} is now complete.

\end{proof}

\section*{Acknowledgments} The authors are grateful to Climenhaga and Thompson for  their careful reading and helpful comments, which significantly improved the presentation of the current paper.

\end{document}